\newtheorem{precor}{{\bf Corollary}}
\newenvironment{cor}{\begin{precor}{\hspace{-0.5
em}{\bf.\ }}}{\end{precor}}
\newtheorem{precon}{{\bf Conjecture}}
\newtheorem{predefin}{{\bf Definition}}
\newenvironment{defin}[1]{\begin{predefin}{\hspace{-0.5
em}{\bf.\ }}{\rm
#1}\hfill{$\blacktriangleleft$}}{\end{predefin}}
\newtheorem{preexm}{{\bf Example}}
\newenvironment{exm}[1]{\begin{preexm}{\hspace{-0.5
em}{\bf.\ }}{\rm #1}\hfill{$\blacktriangleright$}}{\end{preexm}}
\newtheorem{preappl}{{\bf Application}}
\newtheorem{prelem}{{\bf Lemma}}
\newenvironment{lem}{\begin{prelem}{\hspace{-0.5
em}{\bf.\ }}}{\end{prelem}}
\newtheorem{preproof}{{\bf Proof.\ }}
\newenvironment{proof}[1]{\begin{preproof}{\rm
#1}\hfill{$\blacksquare$}}{\end{preproof}}
\newtheorem{presproof}{{\bf Sketch of Proof.\ }}
\newtheorem{prethm}{{\bf Theorem}}
\newenvironment{thm}{\begin{prethm}{\hspace{-0.5
em}{\bf.\ }}}{\end{prethm}}
\newtheorem{prealphthm}{{\bf Theorem}}
\newenvironment{alphthm}{\begin{prealphthm}{\hspace{-0.5
em}{\bf.\ }}}{\end{prealphthm}}
\newtheorem{prepro}{{\bf Proposition}}
\newenvironment{pro}{\begin{prepro}{\hspace{-0.5
em}{\bf.\ }}}{\end{prepro}}
\newtheorem{preprb}{{\bf Problem}}
\newenvironment{prb}{\begin{preprb}{\hspace{-0.5
em}{\bf.\ }}}{\end{preprb}}
\def\symtw{symmetric}
\def\conct[#1,#2]{\mbox {${#1} \leftrightarrow {#2}$}}
\def\dconct[#1,#2]{\mbox {${#1} \rightarrow {#2}$}}
\def\deg[#1,#2]{\mbox {$d_{_{#1}}(#2)$}}
\def\mindeg[#1]{\mbox {$\delta_{_{#1}}$}}
\def\maxdeg[#1]{\mbox {$\Delta_{_{#1}}$}}
\def\outdeg[#1,#2]{\mbox {$d_{_{#1}}^{^+}(#2)$}}
\def\minoutdeg[#1]{\mbox {$\delta_{_{#1}}^{^+}$}}
\def\maxoutdeg[#1]{\mbox {$\Delta_{_{#1}}^{^+}$}}
\def\indeg[#1,#2]{\mbox {$d_{_{#1}}^{^-}(#2)$}}
\def\minindeg[#1]{\mbox {$\delta_{_{#1}}^{^-}$}}
\def\maxindeg[#1]{\mbox {$\Delta_{_{#1}}^{^-}$}}
\def\isdef{\mbox {$\ \stackrel{\rm def}{=} \ $}}
\def\dre[#1,#2,#3]{\mbox {${\cal E}_{_{#3}}(#1,#2)$}}
\def\pdre[#1,#2,#3]{\mbox {${\cal P}_{_{#3}}(#1,#2)$}}
\def\var[#1,#2]{\mbox {${\rm Var}_{_{#1}}(#2)$}}
\def\ls[#1]{\mbox {$\xi^{^{#1}}$}}
\def\onvhom[#1,#2]{\mbox {${\rm Hom^{v}}(#1,#2)$}}
\def\onehom[#1,#2]{\mbox {${\rm Hom^{e}}(#1,#2)$}}
\def\core[#1]{\mbox {$#1^{^{\bullet}}$}}
\def\cay[#1,#2]{\mbox {${\rm Cay}({#1},{#2})$}}
\def\cays[#1,#2]{\mbox {${\rm Cay_{s}}({#1},{#2})$}}
\def\dirc[#1]{\mbox {$\stackrel{\rightarrow}{C}_{_{#1}}$}}
\def\cycl[#1]{\mbox {${\bf Z}_{_{#1}}$}}
\def\sdg[#1]{\mbox {$\stackrel{\leftrightarrow}{#1}$}}
\def\ghom{\mbox {${\rm Hom_{_{\Gamma}}}$}}
\def\hom{\mbox {${\rm Hom}$}}
\def\lhom{\mbox {${\rm Hom_{_{\ell}}}$}}
\def\chom{\mbox {${\rm Hom_{_{\Gamma,m}}}$}}
\def\slhom{\mbox {${\rm Hom^{^{s}}_{_{\ell}}}$}}
\def\schom{\mbox {${\rm Hom^{^{s}}_{_{\Gamma,m}}}$}}
\def\homeq {\approx}
\def\bx {{\bf x}}
\def\by {{\bf y}}
\def\bz {{\bf z}}
\def\bu {{\bf u}}
\def\bv {{\bf v}}
\def\bw {{\bf w}}
\def\J {{\epsilon}}
\long\def\Perm[#1]{\mbox{$\lfloor #1 \rceil$}}
\long\def\nat[#1]{\mbox{$\widehat{1...#1}$}}
\def\ident {\mbox{$\textit{id}$}}
\def\Ident {\mbox{$\textbf{id}$}}
\newcommand{\matr}[1]{\mathrm{#1}} 
\newcommand{\cyl}[1]{\boxtimes_{_{#1}}} 
\newcommand{\scyl}[1]{\boxtimes^{^{s}}_{_{#1}}} 
 \def\authora{Amir Daneshgar}
\def\authorb{Mohsen Hejrati\footnote{M.~Hejrati's contributions to this article
were obtained when he was a BSc. student at Sharif University of Technology.}}
\def\authorc{Meysam Madani}
\title{Cylindrical Graph Construction\\ (definition and basic properties)}
\author{
{ \authora}\\
{ \authorb}\\
{ \authorc}\\[3mm]
{\it Department of Mathematical Sciences} \\
{\it Sharif University of Technology} \\}
\begin{document}

\maketitle
\begin{abstract}
\noindent In this article we introduce the {\it cylindrical construction} for graphs and investigate its basic properties.
We state a main result claiming a weak tensor-like duality for this construction.
Details of our motivations and applications of the construction will appear elsewhere.
\end{abstract}
\section{Introduction}
This article is about a tensor-hom like duality for a specific graph construction that will be called the {\it cylindrical construction}. Strictly speaking, the construction can be described as replacement of edges of a graph by some other graph(s)
(here called cylinders), where many different variants of this construction can be found within the vast literature
of graph theory (e.g. Pultr templates \cite{pultr, ladjoint, adjoint} and replacement \cite{HENE, HRG}).
As our primary motivation for this construction was related to introducing a cobordism theory for graphs and was initiated
through algorithmic problems, we present a general form of this
edge-replacement construction in which there also may be some twists at each terminal end (here called bases) of these cylinders.
In Secion~\ref{sec:cylconst}
we show how this general construction and its dual almost cover most well-known graph constructions. It is also interesting to
note that the role of twists in this construction is fundamental and may give rise to new constructs and results
(e.g. see Sections~\ref{sec:cylconst} for $k$-lifts as cylindrical constructions by identity cylinders and note a recent breakthrough of A.W.Marcus, D.A.Spielman and N.Srivastava \cite{MSS} and references therein for the background).

 In Section~\ref{sec:basics} we go through the basic definitions and notations we will be needing hereafter. An important part of this framework is going through the definitions of different categories of graphs and the pushout construction in them, where we
discuss some subtleties related to the existence of pushouts in different setups in the Appendix of this article for completeness.

 In Sections~\ref{sec:cylconst} we introduce the main construction and its dual, in which we go through the details of expressing many different well-known graph constructions as a cylindrical product or its dual. In this section we have tried to avoid being formal\footnote{Note that from a categorical point of view the natural framework to describe such a construction is the
category of cospans, however, we have focused on the current literature and nomenclature of graph theory to avoid unnecessary
complexities.}
and have described the constructions in detail from the scratch using a variant of examples for clarification.

 In Sections~\ref{sec:main} and \ref{sec:cat} we state the main duality result and some categorical perspective showing the naturality of definitions.
There are related results to reductions for the graph homomorphism problem that will be considered in Section~\ref{sec:adjred}.
Other applications of this construction will appear elsewhere.

\section{Basic definitions}\label{sec:basics}
In this section we go through some basic definitions and will set the notations for what will appear in the rest of the article.
Throughout the paper, {\it sets} are denoted by capital Roman characters as $X,Y,\ldots$ while {\it graphs} are denoted by capital RightRoman ones as $\matr{G}$ or $\matr{H}$. Ordered lists are denoted as $(x_{_{1}},x_{_{2}},\ldots,x_{_{t}})$, and in a concise form by Bold small characters as $\bx$. The notation $|.|$ is used for the size of a set as in $|X|$ or for the size of a list as in $|\bx|$. Also, the greatest common divisor of two integers $n$ and $k$ is denoted by $(n,k)$.

 We use $\nat[k]$ to refer to the set $\{1,2,\ldots,k\}$, and the symbol
${\bf S_{_{k}}}$ is used to refer to the group of all $k$ permutations of this set.
The symbol $\ident$ stands for the identity of the group ${\bf S_{_{k}}}$, and the
symbol ${\Perm[{a_{_{1}},a_{_{2}},\ldots,a_{_{t}}}]}$ stands for the cycle $(a_{_{1}},a_{_{2}},\ldots,a_{_{t}})$ in this group. Also, $\Ident \simeq {\bf S_{_{1}}} \leq {\bf S_{_{k}}}$ is the trivial subgroup containing only the identity element.

 For the ordered list
$\bx=(x_{_{1}},x_{_{2}},\ldots,x_{_{k}})$ and $\gamma \in {\bf S_{_{k}}}$, the right action of $\gamma$ on indices of $\bx$ is denoted as
$\bx \gamma \isdef (x_{_{\gamma(1)}},x_{_{\gamma(2)}},\ldots,x_{_{\gamma(k)}})$, where for any function $f$,
$$f\bx \isdef (f(x_{_{1}}),f(x_{_{2}}),\ldots,f(x_{_{k}})).$$
We use the notation $\{\bx \}$
for the set $\{x_{_{1}},x_{_{2}},\ldots,x_{_{k}}\}$.
\subsection{Graphs and their categories} \label{subsec:grph}
In this paper, a graph $\matr{G}=(V(\matr{G}),E(\matr{G}),\iota_{_{\matr{G}}}: E(\matr{G}) \longrightarrow V(\matr{G}),\tau_{_{\matr{G}}}: E(\matr{G}) \longrightarrow V(\matr{G}))$
consists of a {\it finite} set of {\it vertices} $V(\matr{G})$, a {\it finite} set of {\it edges} $E(\matr{G})$ and two maps $\iota_{_{\matr{G}}}$ and $\tau_{_{\matr{G}}}$
called {\it initial} and {\it terminal} maps, respectively. Hence, our graphs are finite but are generally directed, and may contain loops or multiple edges. For any edge $e \in E(\matr{G})$, the vertex $u \isdef \iota_{_{\matr{G}}}(e)$ is called the {\it initial vertex of $e$} and may be denoted by $e^{-}$. Similarly,
the vertex $v \isdef \tau_{_{\matr{G}}}(e)$ is called the {\it terminal vertex of $e$} and may be denoted by $e^{+}$. In this setting, when there is no ambiguity, the edge $e$ is sometimes referred to by the notation $uv$. Also, we say that an edge $e$ intersects a vertex $u$ if $u=e^-$ or $u=e^+$. Hereafter, we may freely refer to an edge $uv$ when the corresponding data is clear from the context.

 The {\it reduced part} of a graph $\matr{G}$, denoted by ${\rm red}(\matr{G})$, is the graph obtained by excluding all isolated vertices of $\matr{G}$. A graph is said to be {\it reduced} if it is isomorphic to its reduced part.
Given a graph $\matr{G}$ and a subset $X \subseteq V(\matr{G})$, the (vertex) {\it induced subgraph} on the subset
$X$ is denoted by $\matr{G}[X]$.

 A (graph) {\it homomorphism} $(\sigma_{_{V}},\sigma_{_{E}})$ from a graph $\matr{G}=(V(\matr{G}),E(\matr{G}),\iota_{_{\matr{G}}},\tau_{_{\matr{G}}})$ to a graph $\matr{H}=(V(\matr{H}),E(\matr{H}),\iota_{_{\matr{H}}},\tau_{_{\matr{H}}})$
is a pair of maps
$$\sigma_{_{V}}: V(\matr{G}) \longrightarrow V(\matr{H}) \quad {\rm and} \quad \sigma_{_{E}}: E(\matr{G}) \longrightarrow E(\matr{H})$$
which are compatible with the structure
maps (see Figure~\ref{fig:graphhom}), i.e.,
\begin{equation}\label{HOMCONDITION}
\iota_{_{\matr{H}}} \circ \sigma_{_{E}} = \sigma_{_{V}} \circ \iota_{_{\matr{G}}} \quad {\rm and} \quad
\tau_{_{\matr{H}}} \circ \sigma_{_{E}} = \sigma_{_{V}} \circ \tau_{_{\matr{G}}}.
\end{equation}
Note that when there is no multiple edge, one may think of $E(\matr{G})$ as a subset of $V(\matr{G}) \times V(\matr{G})$ and one may talk about the homomorphism $\sigma_{_{V}}$ when the compatibility condition is equivalent to the following,
\begin{equation}
uv \in E(\matr{G}) \quad \Rightarrow \quad \sigma(u)\sigma(v) \in E(\matr{H}).
\end{equation}\label{eq:hom}
The set of homomorphisms from the graph $\matr{G}$ to the graph $\matr{H}$ is denoted by $\hom(\matr{G},\matr{H})$,
and ${\bf Grph}$ stands for the category of (directed) graphs and their homomorphisms. Sometimes we may write $\matr{G}\rightarrow\matr{H}$ for
$\hom(\matr{G},\matr{H}) \neq \emptyset$.

 The partial ordering on the set of all graphs for which $\matr{G} \leq \matr{H}$ if and only if $\hom(\matr{G},\matr{H}) \neq \emptyset$, is denoted by ${\bf Grph}_{_{\leq}}$. Two graphs $\matr{G}$ and $\matr{H}$ are said to be {\it homomorphically equivalent}, denoted by $\matr{G} \homeq \matr{H}$, if both sets $\hom(\matr{G},\matr{H})$ and
$\hom(\matr{H},\matr{G})$ are non-empty i.e., if $\matr{G} \leq \matr{H}$ and $\matr{H} \leq \matr{G}$.
The interested reader is encouraged to refer to \cite{HENE} for more on graphs and their homomorphisms.
In this paper we will be working in different categories of graphs and their homomorphisms, that will be defined in the sequel.
\begin{figure}[ht]
\centering{
\begin{tikzpicture}[thick,scale=0.5]
\tikzstyle{every node}=[ inner sep=0pt, minimum width=5pt]
\node (A)[ ]{$E(\matr{G})$};
\node (B)[node distance=2.5 cm,right of=A] {$V(\matr{G})$};
\node (C)[node distance=2.5 cm,below of=A] {$E(\matr{H})$};
\node (D)[node distance=2.5 cm, below of=B] {$V(\matr{H})$};
\draw[->, thick] (A) to node [auto]{$\iota_{G_{}}$} (B);
\draw[->, thick] (A) to node [auto]{$\sigma_E$} (C);
\draw[->, thick] (C) to node [above]{$\iota_{_{\matr{H}_{}}}$} (D);
\draw[->, thick] (B) to node [auto]{$\sigma_V$} (D);
\end{tikzpicture} \qquad \qquad \begin{tikzpicture}[thick,scale=0.5]
\tikzstyle{every node}=[ inner sep=0pt, minimum width=5pt]
\node (A)[ ]{$E(\matr{G})$};
\node (B)[node distance=2.5 cm,right of=A] {$V(\matr{G})$};
\node (C)[node distance=2.5 cm,below of=A] {$E(\matr{H})$};
\node (D)[node distance=2.5 cm, below of=B] {$V(\matr{H})$};
\draw[->, thick] (A) to node [auto]{$\tau_{_{\matr{G}}}$} (B);
\draw[->, thick] (A) to node [auto]{$\sigma_E$} (C);
\draw[->, thick] (C) to node [above]{$\tau_{_{\matr{H}_{}}}$} (D);
\draw[->, thick] (B) to node [auto]{$\sigma_V$} (D);
\end{tikzpicture}
}
\caption{Commutative diagrams for a graph homomorphism.}
\label{fig:graphhom}
\end{figure}
\begin{defin}{{\bf Marked graphs}\\
Let $X=\{x_{_{1}},x_{_{2}},\ldots,x_{_{k}}\}$ and $\matr{G}$ be a
set and a graph, respectively, and also, consider a one-to-one map
$\varrho: X \hookrightarrow V(\matr{G})$. Evidently, one can
consider $\varrho$ as a graph monomorphism from the empty graph
$\matr{X}$ on the vertex set $X$ to the graph $\matr{G}$, where in
this setting we interpret the situation as {\it marking} some
vertices of $\matr{G}$ by the elements of $X$. The data introduced
by the pair $(\matr{G},\varrho)$ is called a {\it marked graph} $\matr{G}$
marked by the set $X$ through the map $\varrho$. Note that (by abuse
of language) we may introduce the corresponding marked graph as
$\matr{G}(x_{_{1}},x_{_{2}},\ldots,x_{_{k}})$ when the definition of
$\varrho$ (especially its range) is clear from the context (for examples see Figures~\ref{fig:amalgam}$(a)$ and ~\ref{fig:amalgam}$(b)$).
Also, (by abuse of language) we may refer to {\it the vertex $x_{_{i}}$}
as the vertex $\varrho(x_{_{i}}) \in V(\matr{G})$. This is most
natural when $X \subseteq V(\matr{G})$ and vertices in $X$ are marked by
the corresponding elements in $V(\matr{G})$ through the identity mapping.
}\end{defin}
If $\varsigma: X \longrightarrow Y$ is a (not necessarily
one-to-one) map, then one can obtain a new marked graph
$(\matr{H},\tau: Y \longrightarrow V(\matr{H}))$ by considering the pushout of the diagram
\begin{equation}\label{diag:amalgam}
\matr{Y} \stackrel {\varsigma}{\longleftarrow} \matr{X}
\stackrel {\varrho}{\longrightarrow} \matr{G}
\end{equation}
in the category of
graphs. It is well-known that in \textbf{Grph} the pushout exists and is a
monomorphism (e.g. Figure~\ref{fig:amalgam}$(c)$ presents such a pushout (i.e. amalgam) of
marked graphs \ref{fig:amalgam}$(a)$ and \ref{fig:amalgam}$(b)$). Also, it is easy to see that the new marked graph
$(\matr{H},\tau)$ can be obtained from $(\matr{G},\varrho)$ by
identifying the vertices in each inverse-image of $\varsigma$.
Hence, again we may denote
$(\matr{H},\tau)$ as
$\matr{G}(\varsigma(x_{_{1}}),\varsigma(x_{_{2}}),\ldots,\varsigma(x_{_{k}}))$,
where we allow repetition in the list appearing in the brackets.
Note that, with this notation, one may interpret $x_{_{i}}$'s as a set
of {\it variables} in the {\it graph structure}
$\matr{G}(x_{_{1}},x_{_{2}},\ldots,x_{_{k}})$, such that when one
assigns other (new and not necessarily distinct) {\it values} to
these variables one can obtain
some other graphs (by identification of vertices) (e.g. see Figure~\ref{fig:amalgam}$(d)$).
\begin{figure}[ht]
\centering{\includegraphics[width=12cm]{./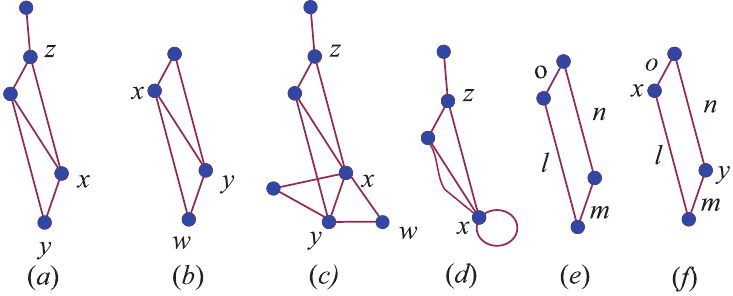}}
\caption{$(a)$ A marked graph $\matr{G}(x,y,z)$, $(b)$ A marked graph $\matr{H}(x,y,w)$, $(c)$ The amalgamation
$\matr{G}(x,y,z)+\matr{H}(x,y,w)$, $(d)$ The marked graph $\matr{G}(x,x,z)$, $(e)$ A labeled graph, $(f)$ A labeled marked graph.}
\label{fig:amalgam}
\end{figure}

 On the other hand, given two marked graphs $(\matr{G},\varrho)$
and $(\matr{H},\tau)$ with
$$X=\{x_{_{1}},x_{_{2}},\ldots,x_{_{k}}\} \quad {\rm and} \quad
Y=\{y_{_{1}},y_{_{2}},\ldots,y_{_{l}}\},$$
one can construct their
amalgam $$(\matr{G},\varrho) +_{_{\matr{S}}} (\matr{H},\tau)$$ by forming the
pushout of the following diagram,
$$\matr{H} \stackrel {\tilde{\tau}}{\longleftarrow} \matr{S}
\stackrel {\tilde{\varrho}}{\longrightarrow} \matr{G},$$ in which
$\matr{S}$ is a graph structure induced on $ X \cap Y$
indicating the way edges should be identified in this amalgam such that
one can define the extensions
$\tilde{\tau} \isdef \tau|_{_{X \cap Y}}$ and $\tilde{\varrho}
\isdef \varrho|_{_{X \cap Y}}$ as graph homomorphisms (note that pushouts exist in the category of graphs and can be constructed
as pushouts of the vertex set and the edge set naturally. For more details see the Appendix).
Following our previous notations we
may denote the new structure by
$$\matr{G}(x_{_{1}},x_{_{2}},\ldots,x_{_{k}})+\matr{H}(y_{_{1}},y_{_{2}},\ldots,y_{_{l}}).$$
If there is no confusion about the definition of mappings (in what follows the maps $\tilde{\tau}$ and $\tilde{\varrho}$ are embeddings of a
subgraph on $X \cap Y$). Note that
when $X \cap Y$ is the empty set, then the amalgam is
the {\it disjoint union} of the two marked graphs. Also, by the
universal property of the pushout diagram, the amalgam can be
considered as marked graphs marked by $X$, $Y$,
$X \cup Y$ or $X \cap Y$.

 Sometimes it is preferred to partition the list of variables in a
graph structure as,
$$\matr{G}(x_{_{1}},x_{_{2}},\ldots,x_{_{k}},
y_{_{1}},y_{_{2}},\ldots,y_{_{l}},
z_{_{1}},z_{_{2}},\ldots,z_{_{m}}).$$ In these cases we may either
use bold symbols for an ordered list of variables and write this
graph structure as $\matr{G}({\bf x};{\bf y};{\bf z})$ (if there is
no confusion about the size of the lists), or we may simply write
$$\matr{G}(x_{_{1}},x_{_{2}},\ldots,x_{_{k}};
y_{_{1}},y_{_{2}},\ldots,y_{_{l}};
z_{_{1}},z_{_{2}},\ldots,z_{_{m}}).$$ It is understood that a
repeated appearance of a graph structure in an expression as
$\matr{G}(v)+\matr{G}(v,w)$ is always considered as different
isomorphic copies of the structure marked properly by the indicated
labels (e.g. $\matr{G}(v)+\matr{G}(v,w)$ is an amalgam constructed
by two different isomorphic copies of $G$ identified on the vertex
$v$ where the
vertex $w$ in one of these copies is marked).

 By $\matr{K}_{_{k}}(v_{_1},v_{_2},\ldots,v_{_k})$ we mean a $k$-clique on
$\{v_{_1},v_{_2},\ldots,v_{_k}\}$ marked by its own set of vertices.
Specially, a single edge is denoted by $\matr{\varepsilon}(v_{_1},v_{_2})$
(i.e., $\matr{\varepsilon}(v_{_1},v_{_2}) \isdef \matr{K}_{_{2}}(v_{_1},v_{_2})$).
We use the notation $\matr{G}[( \bx )]$ for the induced subgraph on vertices of $\matr{G}$ labeled by $\bx$.
\begin{defin}{{\bf Labeled graphs}\\
An $\ell_{_{\matr{G}}}$-graph $\matr{G}$ labeled by the set $L_{_{\matr{G}}}$ consists of the data
$(\matr{G},\ell_{_{\matr{G}}}: E(\matr{G}) \longrightarrow L_{_{\matr{G}}})$, where $\matr{G}$ is a graph and $\ell_{_{\matr{G}}}$ is the labeling map (e.g. see Figure~\ref{fig:amalgam}$(e)$).
A (labeled graph) {\it homomorphism} $(\sigma_{_{V}},\sigma_{_{E}},\sigma_{_{\ell}})$ from
an $\ell_{_{\matr{G}}}$-graph $\matr{G}=(V(\matr{G}),E(\matr{G}),\ell_{_{\matr{G}}})$ to
an $\ell_{_{\matr{H}}}$-graph $\matr{H}=(V(\matr{H}),E(\matr{H}),\ell_{_{\matr{H}}})$
is a triple
$$\sigma_{_{V}}: V(\matr{G}) \longrightarrow V(\matr{H}), \quad \sigma_{_{E}}: E(\matr{G}) \longrightarrow E(\matr{H})
\quad {\rm and} \quad \sigma_{_{\ell}}: L_{_{\matr{G}}} \longrightarrow L_{_{\matr{H}}},$$
where all maps are compatible with the structure maps, i.e., conditions of Equation~\ref{HOMCONDITION} are satisfied as well as,
\begin{equation}\label{LHOMCONDITION}
\ell_{_{\matr{H}}} \circ \sigma_{_{E}} = \sigma_{_{\ell}} \circ \ell_{_{\matr{G}}}.
\end{equation}
The set of homomorphisms from the $\ell_{_{\matr{G}}}$-graph $\matr{G}$ to the $\ell_{_{\matr{H}}}$-graph $\matr{H}$ is denoted by\break $\lhom(\matr{G},\matr{H})$, and ${\bf LGrph}$ stands for the category of labeled graphs and their homomorphisms.
}\end{defin}

 It is obvious that the space of
all graphs can be identified with the class of all labeled graphs,
labeled by the one element set $\{1\}$. Hence, we treat all graphs labeled by a
set of just one element, as {\it ordinary} (i.e. not labeled) graphs.
Also, it is instructive to note that the concept of a labeled graph and the category ${\bf LGrph}$ has a central importance
in the whole world of combinatorial objects (e.g. see \cite{LAWI00}).

 Accordingly, a {\it labeled marked graph} $(\matr{G},\varrho,\ell)$, in which vertices are marked by the set $X$ and the edges are labeled by the set $L$, consists of the data $$(\matr{G},\ \varrho: X \hookrightarrow V(\matr{G}),\ \ell: E(\matr{G}) \longrightarrow L),$$
where $(\matr{G},\varrho)$ is a marked $\ell$-graph equipped with the labeling $\ell: E(\matr{G}) \longrightarrow L$ of its edges (e.g. see Figure~\ref{fig:amalgam}$(f)$).
Again, given two labeled marked graphs
$$(\matr{G},\varrho_{_{\matr{G}}}: X \hookrightarrow V(\matr{G}),\ell_{_{\matr{G}}}: E(\matr{G}) \longrightarrow L_{_{\matr{G}}}) \ {\rm and} \ (\matr{H},\varrho_{_{\matr{H}}}: Y \hookrightarrow V(\matr{H}),\ell_{_{\matr{H}}}: E(\matr{H}) \longrightarrow L_{_{\matr{H}}}),$$
one may consider these graphs as $L_{_{\matr{G}}} \cup L_{_{\matr{H}}}$-labeled marked graphs and consider the pushout of Diagram~\ref{diag:amalgam} in ${\bf LGrph}$, which exists when, intuitively,
the $L_{_{\matr{G}}} \cup L_{_{\matr{H}}}$-labeled marked graph obtained by identifying vertices in $X \cap Y$ can be constructed
by assigning the common labels of the overlapping edges in the intersection (i.e. the labels of overlapping edges must be compatible).
This graph amalgam will be denoted in a concise form (when there is no confusion) by
$$\matr{G}(\varrho_{_{\matr{G}}}(X))+\matr{H}(\varrho_{_{\matr{H}}}(Y)).$$
\begin{defin}{\label{def:symgraph}
A labeled graph $\matr{G}=(V,E,\iota_{_{\matr{G}}},\tau_{_{\matr{G}}},\ell_{_{\matr{G}}})$ is said to be {\it symmetric}
if there is a one to one correspondence between the labeled edges $uv$ and $vu$ that preserves the labeling (see Figure \ref{fig:simple}$(a)$).
If for any two vertices $u$ and $v$ there only exist two edges $uv$ and $vu$ with the same label then the graph is said to be {\it simply symmetric} (see Figure \ref{fig:simple}$(b)$). Clearly, the data available in the structure of a simply symmetric graph can be encoded in a structure on the same set of vertices for which an edge is a subset of $V$ of size two (note that for loops we correspond two directed loops to a simple loop), while in this case the new structure may be called the corresponding {\it generalized simple} graph (see Figure \ref{fig:simple}$(c)$).
\begin{figure}[ht]
\centering{\includegraphics[width=8cm]{./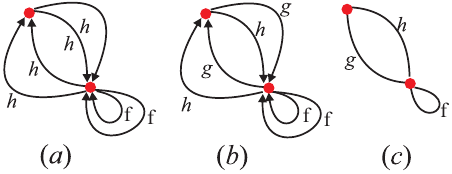}}
\caption{$(a)$ A symmetric graph, $(b)$ A simply symmetric graph, $(c)$ The corresponding simple graph.}
\label{fig:simple}
\end{figure}
Since usually in graph theory {\it simple} graphs do not have loops, we just freely use the word {\it symmetric graph} to refer to the directed or the generalized simple graph itself. Hereafter, ${\bf SymGrph}$ stands for the category of symmetric graphs
as a subcategory of ${\bf LGrph}$ where we may also denote a pair of dual directed edges by a simple edge as it is usual in graph theory.
}\end{defin}
\subsubsection{Category of cylinders and category of $(\Gamma,m)$-graphs}
\begin{defin}{\label{defin:cylinder}{\bf Cylinders}\\
A {\it cylinder} $\matr{C}(\by,\bz,\J)$ of thickness $(t,k)$ (or a
$(t,k)$-cylinder for short), with the {\it initial base} $\matr{B}^-$ and
the {\it terminal base} $\matr{B}^+$ (see Figure~\ref{fig:cylinder-twist}$(a)$), is a labeled marked graph with the following data,
\begin{itemize}
\item{A labeled marked graph $\matr{C}(\by,\bz)$.}
\item{An ordered list $\by=(y_{_{1}},y_{_{2}},\ldots,y_{_{k}})$
with $0 < k$ such that
$$\{y_{_{1}},y_{_{2}},\ldots,y_{_{k}}\} \subseteq V(\matr{C}), \quad
{\rm and}
\quad \matr{B}^- \isdef \matr{C}[y_{_{1}},y_{_{2}},\ldots,y_{_{k}}].$$}
\item{An ordered list $\bz=(z_{_{1}},z_{_{2}},\ldots,z_{_{k}})$
with $0 < k$ such that
$$\{z_{_{1}},z_{_{2}},\ldots,z_{_{k}}\} \subseteq V(\matr{C}), \quad {\rm and}
\quad \matr{B}^+ \isdef \matr{C}[z_{_{1}},z_{_{2}},\ldots,z_{_{k}}].$$}
\item A one to one partial function $\J: \nat[k] \rightarrow \nat[k]$, with
$$|{\rm domain}(\J)|=|{\rm range}(\J)|=t,$$ indicating the equality of $t$ elements of $\by$ and $\bz$, in the sense that
$$\J(i)=j \Leftrightarrow y _i =z _j.$$
\item{The mapping $y_{_{i}} \mapsto z_{_{i}} \ (1 \leq i \leq k)$ determines an isomorphism of the bases $\matr{B}^-$ and $\matr{B}^+$ as labeled graphs}.
\end{itemize}
Hereafter, we always refer to $\J$ as a relation, i.e. $\J=\{(i,\J(i)) \ : \ i \in domain(\J) \}$.
Also, note that in general a cylinder is a directed graph and also has a {\it direction} itself (say from $\matr{B}^-$ to $\matr{B}^+$). A cylinder $\matr{C}(\by,\bz,\J)$ is said to be {\it symmetric} if there exists an automorphism of $\matr{C}$ as $(\sigma_{_V}, \sigma_{_E}): \matr{C} \longrightarrow \matr{C}$ that maps $\matr{B}^-$ to $\matr{B}^+$ isomorphically as marked graphs, such that,
\begin{itemize}
\item {$\forall i\in \nat[k] \quad \sigma(y_i)=z_i, \ \sigma(z_i)=y_i$,}
\item {$\sigma (\matr{B}^-)=\matr{B}^+ ,\quad \sigma (\matr{B}^+)=\matr{B}^-$ (as labeled graphs),}
\item{$\J \subseteq\{(j,j) \ : \ j\in \nat[k]\}$.}
\end{itemize}
}\end{defin}
\begin{figure}[ht]
\centering{\includegraphics[width=10cm]{./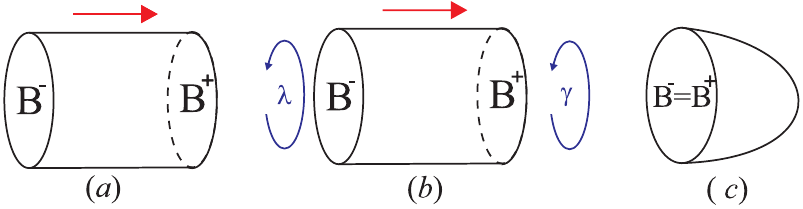}}
\caption{$(a)$ A general form of a cylinder, $(b)$ A cylinder with a twist $(\lambda , \gamma)$, $(c)$ A generalized loop cylinder.}
\label{fig:cylinder-twist}
\end{figure}
\noindent {\bf Notation.} In what follows, we may exclude variables in our notations when they are set to their
{\it default} (i.e. trivial) values. As an example,
note that $\matr{C}(\by,\bz)$ is used when $\J $ is empty (i.e. $t=0$) and we may refer to it as a $k$-cylinder.
Also, we partition the vertex set of a $(t,k)$ cylinder into the \textit{base vertices} that appear in $V(\matr{B}^-) \cup V(\matr{B}^+)$, and the \textit{inner vertices} that do not appear on either base.
A cylinder whose bases are empty graphs and has no inner vertex is called a {\it plain} cylinder.
\begin{figure}[ht]
\begin{center}
\begin{tabular}{c c}
\includegraphics[width=4cm]{./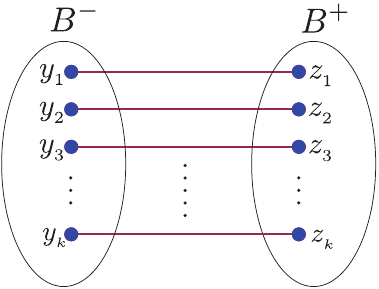} &
\includegraphics[width=4cm]{./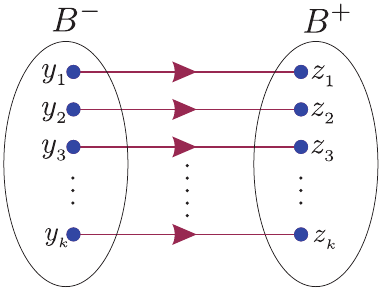}\\
\vspace{-0.5cm}\cr {$(a)$} & {$(b)$}
\end{tabular}
\end{center}
\caption{$(a)$ The identity cylinder $\matr{I}_{_{k}}(\by,\bz)$ (see Examples~\ref{exm:basiccylinders} and \ref{exm:petersen}), $(b)$ The directed identity cylinder $\vec{\matr{I}}_{_{k}}(\by,\bz)$.}
\label{fig:voltage_cylinder}
\end{figure}

 \begin{exm}{\label{exm:basiccylinders}
In this example we illustrate a couple of cylinders to clarify the definition.
\begin{itemize}
\item{{\bf The identity cylinder, $\matr{I}_{_{k}}(\by,\bz)$}

 Define the {\it symmetric identity cylinder} on $2k$ vertices as
$\matr{I}_{_{k}}(\by,\bz)$, where both bases are empty graphs on $k$ vertices and
for all $i \in \nat[k]$ the only edges are
$y_{_{i}}z_{_{i}}$ (see Figure~\ref{fig:voltage_cylinder}(a)).
Clearly, we may define $\matr{I}(y,z) \isdef \matr{I}_{_{1}}(y,z)$ standing for an edge.
Also, we use the notation $\vec{\matr{I}}_{_{k}}(\by,\bz)$ for the directed version, where edges are directed from $\matr{B}^-$ to $\matr{B}^+$(see Figure~\ref{fig:voltage_cylinder}(b)).
When the definition is clear from the context we use the same notation for both directed and symmetric forms of identity cylinder.
}
\item {\bf The $\sqcap$-cylinder}, ${\sqcap}(\by,\bz)$

 This is a $(0,2)$-cylinder (or a $2$-cylinder for short) that is depicted in Figure~\ref{fig:cylin}(a), for which
$\J = \emptyset$. This cylinder will be used in the sequel to construct the Petersen and generalized Petersen graphs (see Example~\ref{exm:petersen}).

 \begin{figure}[ht]
\begin{center}
\begin{tabular}{c c c c}
\includegraphics[width=3cm]{./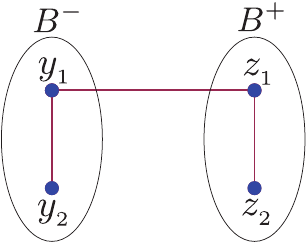} &
\includegraphics[width=3cm]{./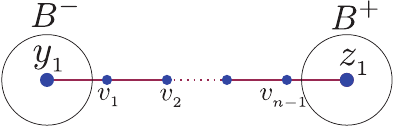} &
\includegraphics[width=3cm]{./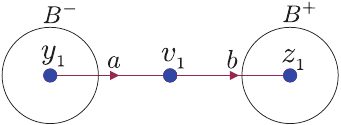}&
\includegraphics[width=3cm]{./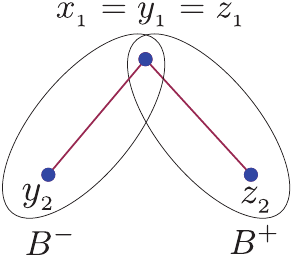} \\
\vspace{-0.5cm}\cr {$(a)$} & {$(b)$} & {$(c)$} & {$(d)$}
\end{tabular}
\end{center}
\caption{$(a)$ The $\sqcap$-cylinder, $(b)$ The path cylinder $\matr{P}_{n}$, $(c)$ The directed path cylinder
$\vec{\matr{P}}_{2}$ of length two, $(d)$ The looped line-graph cylinder.}
\label{fig:cylin}
\end{figure}

 \item {\bf The path cylinder}, $\matr{P}_{{n}}(y,z)$

 For $n > 0$ this is $1$-cylinder with $n+1$ vertices that is depicted in Figure~\ref{fig:cylin}(b), forming a path of
length $n$. This cylinder will be used in the sequel to construct subdivision and
fractional powers of a graph (see Examples~\ref{exm:exp} and \ref{exm:cylop}).
The cylinder $\matr{P}_{_0}$ is a $(1,1)$ cylinder with just one vertex
$y=z$ and no edge for which both bases are identified through $\J=\{(1,1)\}$ and have just one element. This cylinder will be used for contraction operation in graphs (see Example~\ref{exm:cylop}).
Analogously, one may define directed path cylinders $\vec{\matr{P}}_{{n}}(y,z)$ (see Figure~\ref{fig:cylin}(c)).

 \item {\bf The looped line-graph cylinder}, ${\rm \wedge}(\by,\bz)$

 This is a $(1,2)$-cylinder on $3$ vertices, depicted in Figure~\ref{fig:cylin}(d), for which
$\J = \{ (1,1) \}$, i.e. $y_{_{1}} = z_{_{1}}$.
This cylinder will be used in the sequel to construct looped line-graphs (see Example~\ref{exm:exp}).

 \item {\bf Generalized loop cylinder},

 This is a $(k,k)$-cylinder with $\J=\{(1,1),(2,2), ... ,(k,k)\}$. Any graph can be considered as a generalized loop cylinder by fixing an arbitrary subgraph of it with $k$ vertices as $\matr{B}^-=\matr{B}^+$ (see Figure~\ref{fig:cylinder-twist}$(c)$).

 \item{{\bf The deletion cylinder, $d_k(\by, \bz)$}

 This is a $(0,k)$-cylinder with $2k$ vertices and without any edges. This cylinder will be used for deletion operation in graphs. We define $d(y,z) \isdef d_{_{1}}(y,z)$.
}
\item{ {\bf The contraction cylinder},

 This is a particular case of generalized loop cylinder that will be used for the contraction operation in graphs. The contraction cylinder is an empty $(k,k)$ cylinder without any inner vertex. For example $\matr{P}_{_0}$ is a
contraction cylinder.
}
\item{ {\bf Fiber gadget} \cite{fiber,fiber1},

 A graph $\matr{M}$ containing two copies $W^{^a}$ and $W^{^b}$ of an indexed set $W^{^*}$, is an \textit{fiber gadget}. If we set $\matr{C}=\matr{M}$, $B^-=W^{^a}$ and $B^+=W^{^b}$, we can see that the fibre gadget is a cylinder, where we denote it by $\matr{C}(\matr{M})$.
}
\item{ {\bf Pultr templates} \cite{pultr,adjoint, ladjoint},

 A \textit{Pultr template} is a quadruple $\tau=(\matr{P}, \matr{Q}, \eta_{_1}, \eta_{_2})$, where $\matr{P}, \matr{Q}$
are digraphs and $\eta_{_1}, \eta_{_2}: \matr{P} \rightarrow \matr{Q}$ are graph homomorphisms and $Q$ admits an
automorphism $\zeta$ such that \linebreak $\zeta \circ \eta_{_{1}}=\eta_{_{2}}$ and $\zeta \circ \eta_{_{2}}=\eta_{_{1}}$. Note that if
$\eta_{_1}, \eta_{_2}$ are one to one graph embeddings (i.e. $\eta_{_1}(\matr{P}) \simeq \eta_{_2}(\matr{P})
\simeq \matr{P}$), then $\matr{Q}$ has the structure of a $(t,k)$-cylinder with $k=|\matr{P}|$ while
$\J$ and $t$ are determined by the overlaps in the range of the maps $\eta_{_1}, \eta_{_2}$.

 } \end{itemize}
}\end{exm}

 \noindent {\bf Notation.}
Given two $(t,k)$-cylinders $\matr{C}(\by,\bz)$ and $\matr{D}(\by,\bz)$, the $(t,k)$-cylinder $\matr{C}(\by,\bz)+\matr{D}(\by,\bz)$
is the amalgam obtained by identifying bases. Also, the $(2t,2k)$-cylinder $\matr{C} \Cup \matr{D} $ is the cylinder obtained by naturally
taking the disjoint union of these structures (e.g. Figure~\ref{fig:+-}).

 \begin{figure}[ht]
\centering{\includegraphics[width=8cm]{./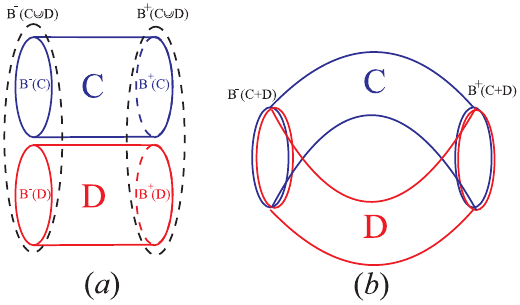}}
\caption{$(a)$ The cylinder $\matr{C} \Cup \matr{D}$, $(b)$ The cylinder $\matr{C}+\matr{D}$.}
\label{fig:+-}
\end{figure}
\begin{defin}{\label{def:twist}{\bf Twist of a cylinder}

 If $|\by| =k$, then $(\gamma ,\lambda) \in {\bf S_{_{k}} \times \bf S_{_{k}}}$ is said to be a {\it twist} of a cylinder $\matr{C}(\by,\bz, \J)$, if $\gamma$ and $\lambda $ are \textit{graph} automorphisms of bases $\matr{B}^-$ or $\matr{B}^+$ (see Figure \ref{fig:cylinder-twist}$(b)$ and note that the bases $\matr{B}^-$ and $\matr{B}^+$ are isomorphic by definition).

 A {\it twisted cylinder} $\matr{C}(\by ',\bz ', (\gamma, \lambda)\J)$, as the result of action of a twist
$(\gamma,\lambda)$ on $\matr{C}(\by,\bz, \J)$, is a cylinder for which we have
$\by '=\by \gamma$, $\bz '=\bz \lambda$ and $(\gamma, \lambda)\J \isdef\{(\gamma(i), \lambda(j)) \ : \ (i,j) \in \J \}$.

 Note that by definition, a twist is an element of $Aut(\matr{B}^-)\times Aut(\matr{B}^-)$ and vice versa.
The group
$Aut(\matr{B}^-)\times Aut(\matr{B}^-)$ is called the {\it twist group} of the cylinder $\matr{C}$.
}\end{defin}

 \noindent\textbf{Notation.} We say that a twist $(\gamma, \delta)$ is \symtw, if $\gamma=\delta$.
\begin{figure}[ht]
\begin{center}
\begin{tabular}{c c c c}
\includegraphics[width=3cm]{./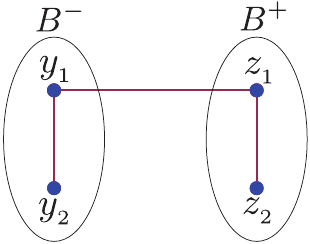} & \includegraphics[width=3cm]{./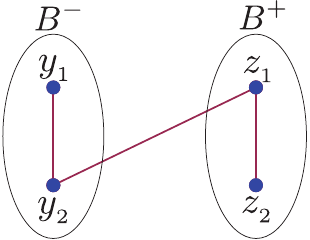} &\includegraphics[width=3cm]{./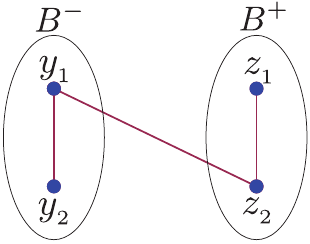} & \includegraphics[width=3cm]{./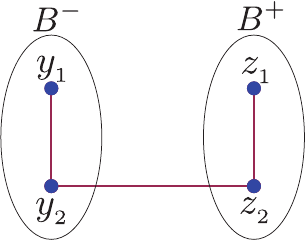} \\
\vspace{-0.5cm}\cr {$(a)$} & {$(b)$} & {$(c)$} & {$(d)$}
\end{tabular}
\end{center}
\caption{Twists of the $\sqcap$-cylinder (see Example~\ref{exm:twists}).}
\label{fig:twist}
\end{figure}

 \begin{exm}{\label{exm:twists}
\textbf{Twists of ${\sqcap}(\by,\bz)$}

 Setting $\pi_{_{2}}\isdef \Perm[{2, 1}] $, each element of ${\bf S_{_{2}} \times \bf S_{_{2}}} \simeq \{ (\ident,\ident), (\ident,\pi_{_{2}}), (\pi_{_{2}}, \ident), (\pi_{_{2}},\pi_{_{2}}) \}$ satisfies the conditions of
Definition~\ref{def:twist}, and consequently, is a twist of the $\sqcap$-cylinder. In what follows we present the action of twists of this type on ${\sqcap}(\by,\bz)$ as depicted in Figure~\ref{fig:cylin}(a).
\begin{itemize}
\item Figure~\ref{fig:twist}$(a)$: The twist $(\ident ,\ident)$ on $\sqcap$-cylinder,
\item Figure~\ref{fig:twist}$(b)$: The twist $(\pi_{_2} ,\ident)$ on $\sqcap$-cylinder,
\item Figure~\ref{fig:twist}$(c)$: The twist $(\ident ,\pi_{_2})$ on $\sqcap$-cylinder,
\item Figure~\ref{fig:twist}$(d)$: The twist $(\pi_{_2} ,\pi_{_2})$ on $\sqcap$-cylinder.
\end{itemize}
}\end{exm}
\begin{defin}{{\bf A $\Gamma$-coherent set of cylinders}

 Given integers $t$ and $k$ and a subgroup $\Gamma \leq {\bf S_{_{k}}}$,
then a set of $(t,k)$-cylinders
$$\matr{C} =\{\matr{C}^{^{j}}(\by^{^{j}},\bz^{^{j}},\J^{^{j}}) \ \ | \ j \in \nat[m]\}$$
is said to be {\it $\Gamma$-coherent}, if
\begin{itemize}
\item{For all $j,r \in \nat[m]$, the mapping $y^{^j}_i \mapsto y^{^r}_i \ (i \in \nat[k] )$ from $\matr{B}^-(\matr{C}^{^{j}})$ to $\matr{B}^-(\matr{C}^{^{r}})$ induces an isomorphism of these bases as labeled graphs,}
\item{ $\Gamma \leq Aut(\matr{B}^-(\matr{C}^{^1}))$.}
\end{itemize}
A $\Gamma$-coherent set of cylinders is said to be {\it symmetric} if each one of its cylinders is a symmetric cylinder.
}\end{defin}
\noindent\textbf{Notation.} When $m=1$, we use the notation to $\Gamma$-cylinder for $\matr{C}^{^{1}}(\by^{^{1}},\bz^{^{1}},\J^{^{1}})$.
\begin{figure}[ht]\centering{
\begin{tabular}{c c}
\includegraphics[width=4cm]{./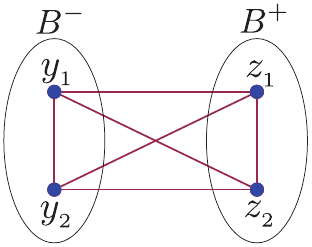} & \includegraphics[width=4cm]{./figures/peterson_cylinder.pdf}\\
\vspace{-0.5cm}\cr {$(a)$} & {$(b)$}
\end{tabular}}
\caption{A set of ${\bf S_{_{2}}}$-coherent cylinders.}
\label{fig:coherent}
\end{figure}

 \begin{exm}{\label{exm:gammacoherent}
In this example we present two coherent sets of cylinders.
\begin{itemize}
\item{
Every set of $1$-cylinders is ${\bf S_{_{1}}}$-coherent. For instance,
every set of path cylinders is ${\bf S_{_{1}}}$-coherent.}
\item{The $\sqcap$-cylinder and the cylinder shown in Figure~\ref{fig:coherent}$(a)$ form a
${\bf S_{_{2}}}$-coherent set of cylinders.}
\end{itemize}
}\end{exm}
\textbf{Remark:} If a set of $(t,k)$-cylinders is a $\textbf{S}_{_k}$-coherent set, then either all
bases are isomorphic to the complete graph $\matr{K}_{_{k}}$ or they are isomorphic to the empty graph $\overline{\matr{K}}_{_k}$.

 \begin{defin}{\label{def:gammacyl}{\bf The category of $(\Gamma,A)$-graphs }

 Given a subgroup $\Gamma \leq {\bf S_{_{k}}}$ and a finite set $A$, a $(\Gamma,A)$-graph is a
graph labeled by the set
$$L_{_{\Gamma,A}} \isdef \{(\gamma^-,a,\gamma^+) \quad | \quad a\in A \ \ ,\quad \gamma^-,\gamma^+\in \Gamma \}.$$
When $A$ is a set of $m$ elements, we usually assume that $A=\nat[m]$ unless it is stated otherwise, where in this case we
talk about $(\Gamma,m)$-graphs.

 We say that a $(\Gamma, m)$-graph has a \symtw \ labeling if all of its twists are \symtw.

 Consider an edge $e \in E(\matr{G})$ in a $(\Gamma,m)$-graph $\matr{G}$ with $e^-=u$ and $e^+=v$. Then if $\ell_{_{\matr{G}}}$ is the labeling map,
and $\ell_{_{\matr{G}}}(e)=(\gamma^-,i,\gamma^+)$, we use the following notations
\begin{equation}
\ell^{*}_{_{\matr{G}}}(e) \isdef i.
\end{equation}
\begin{equation}
\ell^{-}_{_{\matr{G}}}(e) \isdef \gamma^-, \quad \ell^{+}_{_{\matr{G}}}(e) \isdef \gamma^+.
\end{equation}
Also, if $w$ is any one of the two ends of $e$ (i.e. $w=u$ or $w=v$) and there is not any ambiguous, then we can use the notations
\begin{equation}
\ell^{u}_{_{\matr{G}}}(e) \isdef \ell^{-}_{_{\matr{G}}}(e) = \gamma^-, \quad \ell^{v}_{_{\matr{G}}}(e) \isdef \ell^{+}_{_{\matr{G}}}(e) = \gamma^+.
\end{equation}
Suppose that $e_{_1},e_{_2} \in E(\matr{G})$ are incident edges, where $e_{_1}^a=e_{_2}^b=w$ (e.g. $e_{_1}^-=e_{_2}^+$) for $a,b\in \{-,+\}$, then we define a {\it label difference function}
$\Delta_{_{\matr{G}}}^{ab}$ as
$$\Delta_{_{\matr{G}}}^{ab}(e_{_1},e_{_2}) \isdef [\ell^{a}_{_{\matr{G}}}(e_{_{1}})]^{-1}[\ell^{b}_{_{\matr{G}}}(e_{_{2}})].$$
For simple graphs, we can use the notation $\Delta_{_{\matr{G}}}^{w}(e_{_1},e_{_2})$. Of course when there is not any ambiguity we may omit the subscripts or superscripts.

 Let $\matr{G}$ and $\matr{H}$ be two $(\Gamma,m)$-graphs labeled by the maps $\ell_{_{\matr{G}}}$ and $\ell_{_{\matr{H}}}$, respectively.
Then a $(\Gamma,m)$-{\it homomorphism} from $\matr{G}$ to $\matr{H}$ is a graph homomorphism $(\sigma_{_{V}},\sigma_{_{E}}):\matr{G}\longrightarrow \matr{H}$ such that
\begin{equation}
\ell^{*}_{_{\matr{G}}}=\ell^{*}_{_{\matr{H}}} \circ \sigma_{_{E}},
\end{equation}
and for any pair of two edges $e_{_{1}},e_{_{2}} \in E(\matr{G})$ that intersect in $e_{_{1}}^a=e_{_{2}}^b$ (for $a,b\in \{-,+\}$),
$$\Delta_{_{\matr{G}}}^{ab}(e_{_1},e_{_2})=\Delta_{_{\matr{H}}}^{ab}(\sigma_E(e_{_1}),\sigma_E(e_{_2})),$$
or equivalently,
\begin{equation}
[\ell^{a}_{_{\matr{G}}}(e_{_{1}})]^{-1}[\ell^{b}_{_{\matr{G}}}(e_{_{2}})]=
[\ell^{a}_{_{\matr{H}}}
(\sigma_{_{E}}(e_{_{1}}))]^{-1}
[\ell^{b}_{_{\matr{H}}}(\sigma_{_{E}}(e_{_{2}}))].
\end{equation}
The set of homomorphisms from the $(\Gamma,m)$-graph $\matr{G}$ to the $(\Gamma,m)$-graph $\matr{H}$ is denoted by $\chom(\matr{G},\matr{H})$, and ${\bf LGrph}(\Gamma,m)$ stands for the category of $(\Gamma,m)$-graphs and their homomorphisms.
}\end{defin}
Note that when $m=1$ and $\Gamma=\Ident $ then ${\bf Grph} \simeq {\bf LGrph}({\bf S}_{_{1}},1)$.
\begin{prepro}{\label{CHOMLEMMA}
Let $\matr{G}$ and $\matr{H}$ be two $(\Gamma,m)$-graphs labeled by the maps $\ell_{_{\matr{G}}}$ and $\ell_{_{\matr{H}}}$, respectively.
Then for a labeled marked graph homomorphism $(\sigma_{_{V}},\sigma_{_{E}}) \in \chom(\matr{G},\matr{H})$ the following conditions are equivalent.
\begin{itemize}{
\item[{\rm a)}]{The pair $(\sigma_{_{V}},\sigma_{_{E}})$ is a $(\Gamma,m)$-homomorphism. In other words, for any pair of two edges $e_{_{1}},e_{_{2}} \in E(\matr{G})$ that intersect at a vertex $e_{_{1}}^a=e_{_{2}}^b$ (for $a,b\in \{-,+\}$),
$$\Delta_{_{\matr{G}}}^{ab}(e_{_1},e_{_2})=\Delta_{_{\matr{H}}}^{ab}(\sigma_E(e_{_1}),\sigma_E(e_{_2})).
$$
}
\item[{\rm b)}]{For any vertex $u \in V(\matr{G})$, there exists a unique constant $\alpha_{_{u}} \in \Gamma$ such that for any edge $e \in E(\matr{G})$
that intersects in $e_{_{1}}^a=u$ (for $a\in \{-,+\}$), we have,
$$\ell^{a}_{_{\matr{G}}}(e)=\alpha_{_{u}}[\ell^{a}_{_{\matr{H}}}(\sigma_{_{E}}(e))].$$
}
}\end{itemize}
}\end{prepro}

 \begin{proof}{(a $\Rightarrow$ b) Fix a vertex $u$ and an edge $e_{_{0}}$ that $e_{_{0}}^+=u$ (one can do the rest with the assumption of $e_{_{0}}^+=u$ in a similar way) and define,
$$\alpha_{_{u}} \isdef \ell^{+}_{_{\matr{G}}}(e_{_{0}})[\ell^{+}_{_{\matr{H}}}
(\sigma_{_{E}}(e_{_{0}}))]^{-1}.$$
Then for any other edge $e$ that $e^a=u$ for $a\in \{+,-\}$ we have,
$$
\begin{array}{ll}
\ell^{a}_{_{\matr{G}}}(e)&= \ell^{+}_{_{\matr{G}}}(e_{_{0}})
[\ell^{+}_{_{\matr{G}}}(e_{_{0}})]^{-1}\ell^{a}_{_{\matr{G}}}(e)\\
&\\
&=\ell^{+}_{_{\matr{G}}}(e_{_{0}})[\ell^{+}_{_{\matr{H}}}
(\sigma_{_{E}}(e_{_{0}}))]^{-1}[\ell^{a}_{_{\matr{H}}}
(\sigma_{_{E}}(e))]\\
&\\
&= \alpha_{_{u}}[\ell^{a}_{_{\matr{H}}}(\sigma_{_{E}}(e))].
\end{array}
$$
(b $\Rightarrow$ a) is clear by definition of the constant $ \alpha_{_{u}}$.
}\end{proof}

 \begin{defin}{
Consider a graph $\matr{G}$ labeled by the set $L_{_{\Gamma,m}} $ and the map $\ell_{_{\matr{G}}}$, along with a
vector $\alpha \in \Gamma^{|V(\matr{G})|}$. Then we define the labeled graph $\matr{G}_\alpha$ on the graph $\matr{G}$
with the labeling
$$\ell_{_{\matr{G}_{\alpha}}}(uv)=
(\alpha_{_{u}}\ell^{-}_{_{\matr{G}}}(uv),\ell^{*}_{_{\matr{G}}}(uv),\alpha_{_{v}}\ell^{+}_{_{\matr{G}}}(uv)).$$
}\end{defin}
\noindent {\bf Remark:} If $\matr{C} = \{\matr{C}^{^{j}}(\by^{^{j}},\by^{^{j}}, \J ^{^j}) \ | \ j \in \nat[m]\}$ is a $\Gamma$-coherent set of cylinders labeled by $L_{_{\Gamma,m}}$, then
$$\matr{C}_{_\alpha} \isdef \{ \matr{C}_{_\alpha}^{^{j}}(\by^{^{j}},\bz^{^{j}}, \J^{^j}) \ | \ j \in \nat[m]\}$$
is also a $\Gamma$-coherent set of cylinders.\\
The following results is a direct consequence of Proposition~\ref{CHOMLEMMA}.
\begin{cor}{\label{cor:alphashift}
Let $\matr{G}$ and $\matr{H}$ be two graphs labeled by the set $L_{_{\Gamma,m}}$ and the maps $\ell_{_{\matr{G}}}$ and $\ell_{_{\matr{H}}}$, respectively. Then, considering these graphs as objects of the category of labeled graphs ${\bf LGrph}$, and also as objects of the category of $(\Gamma,m)$-graphs ${\bf LGrph}(\Gamma,m)$,
$$\exists\ \alpha \in \Gamma^{|V(\matr{G})|} \quad \chom(\matr{G},\matr{H}) \not = \emptyset \ \ \Leftrightarrow \ \ \lhom(\matr{G},\matr{H}_{\alpha}) \not = \emptyset \ \ \Leftrightarrow \ \ \lhom(\matr{G}_{\alpha^{-1}},\matr{H}) \not = \emptyset.$$
}\end{cor}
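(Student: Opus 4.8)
\noindent\emph{Proof idea.} The plan is to read the whole statement off Proposition~\ref{CHOMLEMMA}, which already carries the essential content: it says that an $\ell^{*}$-preserving graph homomorphism between two $(\Gamma,m)$-graphs is a $(\Gamma,m)$-homomorphism precisely when its discrepancy with respect to the full $\Gamma$-labelling can be \emph{localised at the vertices}, i.e. for every vertex $u$ there is a single constant $\alpha_{_u}\in\Gamma$ absorbing the discrepancy of every edge meeting $u$. The constructions $\matr{G}\mapsto\matr{G}_{\alpha^{-1}}$ and $\matr{H}\mapsto\matr{H}_{\alpha}$ do nothing but left-multiply the two $\Gamma$-coordinates of each edge label by these vertex constants while leaving $\ell^{*}$ untouched; so absorbing the discrepancy into the source (via $\alpha^{-1}$ on $\matr{G}$) or into the target (via $\alpha$ on $\matr{H}$) is exactly what turns a $(\Gamma,m)$-homomorphism into an honest morphism of ${\bf LGrph}$, and conversely.

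For ``$\chom(\matr{G},\matr{H})\neq\emptyset\Rightarrow$ the shifted $\lhom$-sets are non-empty'' I would start from any $(\sigma_{_V},\sigma_{_E})\in\chom(\matr{G},\matr{H})$ and apply the implication $(\mathrm a)\Rightarrow(\mathrm b)$ of Proposition~\ref{CHOMLEMMA} to get, for each $u\in V(\matr{G})$, the constant $\alpha_{_u}\in\Gamma$ with $\ell^{a}_{_{\matr{G}}}(e)=\alpha_{_u}\,\ell^{a}_{_{\matr{H}}}(\sigma_{_E}(e))$ for every edge $e$ meeting $u$ at its $a$-end, $a\in\{-,+\}$ (at the finitely many isolated vertices condition $(\mathrm b)$ imposes nothing, so $\alpha_{_u}$ may there be chosen arbitrarily); put $\alpha\isdef(\alpha_{_u})_{u\in V(\matr{G})}$. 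By definition the label of an edge $uv$ in $\matr{G}_{\alpha^{-1}}$ is $(\alpha_{_u}^{-1}\ell^{-}_{_{\matr{G}}}(uv),\,\ell^{*}_{_{\matr{G}}}(uv),\,\alpha_{_v}^{-1}\ell^{+}_{_{\matr{G}}}(uv))$, and rearranging the defining relation at the $-$-end $u$ and at the $+$-end $v$, together with $\ell^{*}_{_{\matr{G}}}=\ell^{*}_{_{\matr{H}}}\circ\sigma_{_E}$, shows this equals $\ell_{_{\matr{H}}}(\sigma_{_E}(uv))$ coordinate by coordinate; hence the very same pair $(\sigma_{_V},\sigma_{_E})$ lies in $\lhom(\matr{G}_{\alpha^{-1}},\matr{H})$. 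Recording the same constants on the target rather than the source — transporting $\alpha_{_u}$ to $\sigma_{_V}(u)$ — gives by the mirror computation a membership in $\lhom(\matr{G},\matr{H}_{\alpha})$.

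For the converse implications I would run the computation backwards. Given $(\sigma_{_V},\sigma_{_E})\in\lhom(\matr{G}_{\alpha^{-1}},\matr{H})$ (respectively $\in\lhom(\matr{G},\matr{H}_{\alpha})$), reading off the two ends of the label of each edge shows that this vector $\alpha$ is exactly a witness for condition $(\mathrm b)$ of Proposition~\ref{CHOMLEMMA} for the underlying graph homomorphism $\matr{G}\to\matr{H}$; moreover $\ell^{*}$ is visibly preserved because the shift operations never touch the middle coordinate. Then $(\mathrm b)\Rightarrow(\mathrm a)$ of Proposition~\ref{CHOMLEMMA} gives $(\sigma_{_V},\sigma_{_E})\in\chom(\matr{G},\matr{H})$, which closes the cycle of implications and yields the displayed chain of equivalences.

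The argument is all bookkeeping, so the places needing care are: the non-commutativity of $\Gamma$, so that the constants act on the left, $\alpha^{-1}$ is the coordinatewise inverse attached to the \emph{same} vertex, and no reindexing sneaks in; and the fact that only the two $\Gamma$-coordinates of a label carry information, $\ell^{*}$ being inert throughout. The one genuinely delicate point — the step I expect to be the main obstacle — is the form involving $\matr{H}_{\alpha}$: there one must carry the per-vertex constants of Proposition~\ref{CHOMLEMMA} from $V(\matr{G})$ over to $V(\matr{H})$ along $\sigma_{_V}$, which forces one to check that these constants are consistent on the fibres of $\sigma_{_V}$ (this is immediate when $\sigma_{_V}$ is injective, in particular for homomorphisms that are the identity on vertices as in the reduction applications, but in general it is the point where either an extra choice of homomorphism or a hypothesis on the vertex sets must be brought in). No idea beyond Proposition~\ref{CHOMLEMMA} is required.
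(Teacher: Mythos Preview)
Your approach is exactly the paper's: the corollary is stated as ``a direct consequence of Proposition~\ref{CHOMLEMMA}'' with no further argument, and your expansion---taking the vertex constants $\alpha_{_u}$ from part~(b) and absorbing them into the labeling of either $\matr{G}$ or $\matr{H}$---is precisely the intended reading. Your flag on the $\matr{H}_{\alpha}$ case (that $\alpha$ is naturally indexed by $V(\matr{G})$, so transporting it to $V(\matr{H})$ requires consistency on the fibres of $\sigma_{_V}$) is a genuine subtlety the paper's statement glosses over; you are right to note it, and the paper offers no resolution beyond the informal claim.
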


 \begin{lem}{\label{lem:gamma}
Let $\sigma \in \chom(\matr{G},\matr{H})$, and also $\ell_{_\matr{G}}$, $\ell_{_\matr{H}}$ be the $(\Gamma, m)$-labelings of $\matr{G}$ and $\matr{H}$ respectively.
\begin{enumerate}
\item[{\rm 1.}]
If $e_{_1},e_{_2} \in E(\matr{G})$, $e_{_1}^-=e_{_2}^-$, $e_{_1}^+=e_{_2}^+$ and $\ell_{_{\matr{H}}}(\sigma_{_E}(e_{_1}))=\ell_{_{\matr{H}}}(\sigma_{_E}(e_{_2}))$ then\break $\ell_{_{\matr{G}}}(e_{_1})=\ell_{_{\matr{G}}}(e_{_2})$.
\item[{\rm 2.}]
Let $\matr{G}$ be symmetrically labeled. If $\alpha_{u}$ $($as defined in Proposition~$\ref{CHOMLEMMA})$ is a constant function on vertices of $\matr{G}$, then the image of $\sigma$ is a symmetrically labeled subgraph of $\matr{H}$.
\end{enumerate}
}\end{lem}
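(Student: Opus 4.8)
The plan is to handle the two parts separately, with Proposition~\ref{CHOMLEMMA}(b) carrying the weight in both.

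For Part~1 the middle coordinate is free: since $\sigma\in\chom(\matr{G},\matr{H})$ the homomorphism condition gives $\ell^{*}_{_{\matr{G}}}=\ell^{*}_{_{\matr{H}}}\circ\sigma_{_{E}}$, so $\ell^{*}_{_{\matr{G}}}(e_{_1})=\ell^{*}_{_{\matr{H}}}(\sigma_{_{E}}(e_{_1}))=\ell^{*}_{_{\matr{H}}}(\sigma_{_{E}}(e_{_2}))=\ell^{*}_{_{\matr{G}}}(e_{_2})$. For the two group coordinates I would apply Proposition~\ref{CHOMLEMMA}(b) at the two shared endpoints $u=e_{_1}^{-}=e_{_2}^{-}$ and $v=e_{_1}^{+}=e_{_2}^{+}$: with $\alpha_{_u},\alpha_{_v}\in\Gamma$ the constants it supplies, $\ell^{-}_{_{\matr{G}}}(e_{_i})=\alpha_{_u}[\ell^{-}_{_{\matr{H}}}(\sigma_{_{E}}(e_{_i}))]$ and $\ell^{+}_{_{\matr{G}}}(e_{_i})=\alpha_{_v}[\ell^{+}_{_{\matr{H}}}(\sigma_{_{E}}(e_{_i}))]$ for $i=1,2$. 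The hypothesis $\ell_{_{\matr{H}}}(\sigma_{_{E}}(e_{_1}))=\ell_{_{\matr{H}}}(\sigma_{_{E}}(e_{_2}))$ makes the bracketed terms coincide for $i=1$ and $i=2$, whence $\ell^{-}_{_{\matr{G}}}(e_{_1})=\ell^{-}_{_{\matr{G}}}(e_{_2})$ and $\ell^{+}_{_{\matr{G}}}(e_{_1})=\ell^{+}_{_{\matr{G}}}(e_{_2})$; assembling the three coordinates gives $\ell_{_{\matr{G}}}(e_{_1})=\ell_{_{\matr{G}}}(e_{_2})$. I expect no difficulty here.

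For Part~2, write $\alpha\in\Gamma$ for the common value of the $\alpha_{_u}$, $u\in V(\matr{G})$. The computational core is to restate Proposition~\ref{CHOMLEMMA}(b) as a uniform transport rule for labels: for every $e\in E(\matr{G})$ one has $\ell^{-}_{_{\matr{H}}}(\sigma_{_{E}}(e))=\alpha^{-1}\ell^{-}_{_{\matr{G}}}(e)$, $\ell^{+}_{_{\matr{H}}}(\sigma_{_{E}}(e))=\alpha^{-1}\ell^{+}_{_{\matr{G}}}(e)$, and (from the homomorphism condition) $\ell^{*}_{_{\matr{H}}}(\sigma_{_{E}}(e))=\ell^{*}_{_{\matr{G}}}(e)$; that is, the label of $\sigma_{_{E}}(e)$ is obtained from the label of $e$ by the single rule ``left-translate both group coordinates by $\alpha^{-1}$''. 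Now fix the reverse-edge involution $e\mapsto\bar{e}$ witnessing that $\matr{G}$ is symmetrically labeled, so $\bar{e}^{-}=e^{+}$, $\bar{e}^{+}=e^{-}$, and the label of $\bar{e}$ is the one determined from that of $e$ by the symmetric structure of $\matr{G}$. Applying $\sigma$: since $\sigma$ is a graph homomorphism, $\sigma_{_{E}}(\bar{e})$ runs from $\sigma_{_{V}}(e^{+})$ to $\sigma_{_{V}}(e^{-})$, i.e.\ it is the reverse direction of $\sigma_{_{E}}(e)$; and substituting the transport rule for both $e$ and $\bar{e}$ into the label relation between $e$ and $\bar{e}$, the two factors $\alpha^{-1}$ appear on both sides and cancel, so $\ell_{_{\matr{H}}}(\sigma_{_{E}}(\bar{e}))$ stands to $\ell_{_{\matr{H}}}(\sigma_{_{E}}(e))$ in exactly the same label relation. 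Hence every edge of the image $\sigma(\matr{G})$ carries, inside the image, a reverse-directed partner with the matching label, which is the required symmetric structure on $\sigma(\matr{G})$ regarded as a labeled subgraph of $\matr{H}$.

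The step I expect to need the most care is promoting ``each image edge has a reverse-directed image edge with matching label'' to an honest one-to-one, label-preserving correspondence on $E(\sigma(\matr{G}))$, since $\sigma_{_{E}}$ need not be injective and the naive push-forward of $e\mapsto\bar{e}$ is \emph{a priori} only a relation on the image. Part~1 is the natural lever here: it forces any two parallel edges of $\matr{G}$ with the same $\sigma_{_{E}}$-image to share their $\matr{G}$-label, hence (by the symmetric structure of $\matr{G}$) their mirrors also share a $\matr{G}$-label, and combined with the constancy of $\alpha$, which pins the $\matr{G}$-label of an edge to its $\sigma_{_{E}}$-image through the transport rule, this is what one uses to check that pushing $e\mapsto\bar{e}$ forward along $\sigma_{_{E}}$ descends to a well-defined involution on the edges of $\sigma(\matr{G})$ that reverses direction and preserves labels. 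Assembling this descent with the label computation of the previous paragraph then yields that $\sigma(\matr{G})$ is a symmetrically labeled subgraph of $\matr{H}$, completing the proof.
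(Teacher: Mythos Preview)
Your Part~1 is correct and essentially identical to the paper's argument: apply Proposition~\ref{CHOMLEMMA}(b) at the common initial and terminal vertices to match the two group coordinates, and use $\ell^{*}_{_{\matr{G}}}=\ell^{*}_{_{\matr{H}}}\circ\sigma_{_{E}}$ for the middle coordinate.

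Your Part~2, however, is built on a misreading of the phrase ``symmetrically labeled''. In this paper that phrase does \emph{not} refer to the symmetric-graph structure of Definition~\ref{def:symgraph} (a reverse-edge involution $e\mapsto\bar{e}$). It is introduced in Definition~\ref{def:gammacyl}: a $(\Gamma,m)$-graph has a \emph{symmetric labeling} if all of its twists are symmetric, i.e.\ every edge label $(\gamma^{-},i,\gamma^{+})$ satisfies $\gamma^{-}=\gamma^{+}$. The conclusion ``the image of $\sigma$ is a symmetrically labeled subgraph of $\matr{H}$'' is to be read the same way: every edge of $\sigma(\matr{G})$ carries a label whose two group coordinates coincide.

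With the intended definition the proof is a one-line computation, and this is exactly what the paper does. Take $e=uv\in E(\matr{G})$; by hypothesis $\ell_{_{\matr{G}}}(e)=(\lambda,i,\lambda)$ for some $\lambda\in\Gamma$. Proposition~\ref{CHOMLEMMA}(b) gives $\ell^{-}_{_{\matr{H}}}(\sigma_{_{E}}(e))=\alpha_{_{u}}^{-1}\lambda$ and $\ell^{+}_{_{\matr{H}}}(\sigma_{_{E}}(e))=\alpha_{_{v}}^{-1}\lambda$; since $\alpha_{_{u}}=\alpha_{_{v}}$, the two group coordinates of $\ell_{_{\matr{H}}}(\sigma_{_{E}}(e))$ agree. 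Your machinery about reverse edges, pushing forward an involution, and invoking Part~1 to make it well-defined is addressing a different (and harder) statement than the one being claimed.
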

\begin{proof}{\
\begin{enumerate}
\item
Let $e_{_1}=e_{_2}=uv$, then by Proposition~\ref{CHOMLEMMA} for some $\alpha_u$ we have $$\ell_{_{\matr{G}}}^-(e_{_1})=\alpha_u\ell_{_{\matr{H}}}^-(\sigma_{_E}(e_{_1}))=\alpha_u\ell_{_{\matr{H}}}^-(\sigma_{_E}(e_{_2}))=\ell_{_{\matr{G}}}^-(e_{_2}).$$
The proof of $\ell_{_{\matr{G}}}^v(e_{_1})=\ell_{_{\matr{G}}}^v(e_{_2})$ is similar.
\item
Let $e=uv \in E(\matr{G})$. By the hypothesis, there exist $\lambda\in \Gamma$ and $i\in \nat[m]$
such that $\ell_{_{\matr{G}}}(e)=(\lambda, i, \lambda)$. Consequently, since $\alpha_{_u}=\alpha_{_v}$, we have
$\ell_{_{\matr{H}}}(\sigma_{_E}(e))=(\alpha_{_u}\lambda, i, \alpha_{_u}\lambda).$
\end{enumerate}
}\end{proof}

\section{The cylindrical graph construction}\label{sec:cylconst}
\subsection{The exponential construction}
Given a $\Gamma$-coherent set of $(t,k)$-cylinders
containing $m \geq 1$ cylinders, there is a canonical way of constructing an exponential graph that will be defined in the next definition.

 \begin{defin}{\label{defin:exponentialgraph}{\bf The exponential graph $[\matr{C},\matr{H}]_{_{\Gamma}}$}

 For any given labeled graph $\matr{H}$ and a $\Gamma$-coherent set of $(t,k)$-cylinders
$$\matr{C} \isdef \{ \matr{C}^{^{j}}(\by^{^{j}},\bz^{^{j}}, \J ^{^{j}}) \ \ | \ \ j \in \nat[m]\},$$
the {\it exponential graph} $[\matr{C},\matr{H}]_{_{\Gamma}}$
is defined (up to isomorphism) as a $(\Gamma,m)$-graph in the following way,
\begin{itemize}
\item[{\rm i)}]{Consider the right action of $\Gamma$ on $ V(\matr{H})^k$ as
$$\bv \gamma=(v_{_{1}},v_{_{2}},\ldots,v_{_{k}}) \gamma \isdef (v_{_{\gamma(1)}},v_{_{\gamma(2)}},\ldots,v_{_{\gamma(k)}}),$$
and the corresponding equivalence relation whose equivalence classes determine the orbits of this action, i.e.
$$\bu \sim_{_{\Gamma}} \bv \quad \Leftrightarrow \quad \exists\ \gamma \in \Gamma \ \ \ \bu\gamma=\bv.$$
Also, fix a set of representatives $U$ of these equivalence classes and let
$$V_{_{U}}([\matr{C},\matr{H}]_{_{\Gamma}}) \isdef U = \{\bu_{_1},\bu_{_2},\ldots,\bu_{_d}\} .$$
}
\item[{\rm ii)}]{There is an edge $e \isdef \bu_{_{i}}\bu_{_{k}} \in E_{_{U}}([\matr{C},\matr{H}]_{_{\Gamma}})$ with the label
$\ell(e)=(\gamma^{{\bu_{_{i}}}}, j, \gamma^{{\bu_{_{k}}}})$ if and only if there exists a homomorphism
$$(\sigma_{_{V}},\sigma_{_{E}}) \in \lhom(\matr{C}^{^j}(\by^{^{j}},\bz^{^{j}}, \J^{^{j}}),\matr{H})$$
such that,
$$\sigma_{_{V}}(\by^{^{j}}\gamma^{{\bu_{_{i}}}})=\bu_{_{i}} \quad {\rm and} \quad \sigma_{_{V}}(\bz^{^{j}}\gamma^{{\bu_{_{k}}}})=\bu_{_{k}}.$$ }
\item[{\rm iii)}]{Exclude all isolated vertices.}

 \end{itemize}

 Using Proposition~\ref{CHOMLEMMA}, one may verify that for two different sets of representatives $U$ and $W$ we have
$$\forall\ i \in \nat[d] \quad \exists\ \gamma_i \in \Gamma \quad \bu_{_{i}}\gamma_{_{i}} = \bw_{_{i}},$$
and that the map $ \sigma=(\sigma_{_{V}},\sigma_{_{E}})$ defined as
$$\forall\ i \in \nat[d] \quad \sigma_{_{V}}(\bu_{_{i}}) \isdef \bw_{_{i}},$$
and for $e=\bu_{_{i}}\bu_{_{k}}$ with $\ell(e)=(\gamma^{{\bu_{_{i}}}},j,\gamma^{{\bu_{_{k}}}})$,
$$\sigma_{_{E}}(e) \isdef \bw_{_{i}}\bw_{_{k}}, \quad {\rm with} \quad \ell(\sigma_{_{E}}(e)) =
(\gamma_{_{i}}\gamma^{{\bu_{_{i}}}},j,\gamma_{_{k}}\gamma^{{\bu_{_{k}}}}),$$
is an isomorphism of $(\Gamma,m)$-graphs, and consequently, hereafter, we assume that every
exponential graph is constructed with respect to a fixed
class of representatives $U$, and we omit the corresponding subscript since we are just dealing with such graphs up to an isomorphism.

 Also, note that in general an exponential graph is a directed graph, however, if one dealing with a $\Gamma$-coherent set of {\it symmetric} $(t,k)$-cylinders then by the analogy between symmetric graphs and their directed counterparts, where each simple edge is replaced by a pair of directed edges in different directions, for any given symmetric labeled graph $\matr{H}$ one may talk about a {\it symmetric exponential graph}, $[\matr{C},\matr{H}]^{^s}_{_{\Gamma}}$, in which all edges are simple since by the above
definition and the definition of a symmetric cylinder,
$$ {\bf u}{\bf v} \in E([\matr{C},\matr{H}]_{_{\Gamma}}) \quad \Leftrightarrow \quad {\bf v}{\bf u} \in E([\matr{C},\matr{H}]_{_{\Gamma}}). $$
Note that there may exist a directed cylinder $\matr{C}$ and a directed graph $\matr{H}$,
such that the symmetric exponential graph $[\matr{C},\matr{H}]^{^s}_{_{\Gamma}}$ is meaningful.}\end{defin}

 \noindent {\bf Notation.} The representative $\bu_{_{i}}$ of the equivalence class $[\bu]_{_{\sim_{_{\Gamma}}}}$ (where $\bu_{_{i}} = \bu \gamma$ for some $\gamma \in \Gamma$) is denoted by $\langle \bu \rangle_{_{\Gamma}}$.
Also, as before we may exclude some variables when they are set to their {\it default} values,
as the concept of a $\Gamma$-graph which stands for a $(\Gamma,1)$-graph or $[\matr{C},\matr{H}]$ which stands for $[\matr{C},\matr{H}]_{_{\Ident}}$.\\
Moreover, to clarify the figures, we may use the {\it name} of the cylinders, instead of their index
(e.g. see Figures~ \ref{fig:exponential} and \ref{fig:loopedline3}).
\begin{exm}{
\label{exm:exp}
Here we consider a couple of examples for the exponential graph construction.
\begin{itemize}
\item{{\bf The role of identity}

 It is clear that the identity cylinder satisfies the identity property $[\matr{I},\matr{H}] = \matr{H}$ in both directed and symmetric cases (see Example~\ref{exm:basiccylinders}).}

 \item{ {\bf The indicator construction} (e.g. see \cite{HENE} and references therein)

 The exponential graph $[\matr{C},\matr{H}]$
for a $1$-cylinder $\matr{C}(y,z)$ is the standard indicator construction, where the symmetric case $[\matr{C},\matr{H}]^{^s}$
is again the standard construction through the corresponding automorphism (see Definition~\ref{defin:cylinder}). }

 \begin{figure}[ht]
\begin{center}
\begin{tabular}{c c}
\includegraphics{./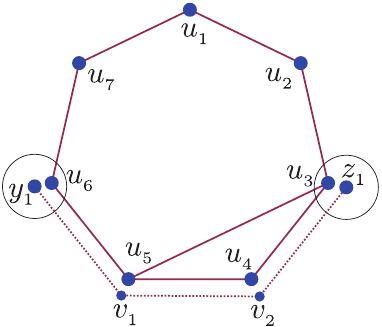} &
\includegraphics{./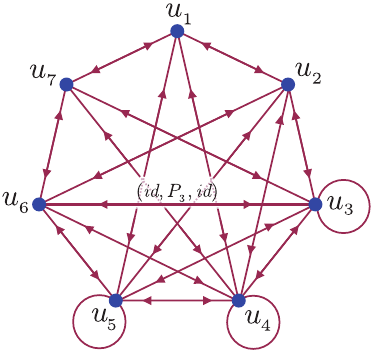}\\
\vspace{-0.5cm}\cr {$(a)$} & {$(b)$}
\end{tabular}
\end{center}
\caption{$(a)$ A base graph $\matr{G}$ and the Cylinder $\matr{P}_{{3}}$, \ $(b)$ The graph $\matr{G}^3=[\matr{P}_{{3}},\matr{G}]_{_{\Ident}}$.}
\label{fig:exponential}
\end{figure}

\item{ {\bf The $n$th power of a graph $\matr{G}$}

 It is easy to check that the $n$th power\footnote{Note that this is different from
the other definition for the $n$th power of a graph, where an edge is added when there exists a walk of length {\it less than or equal to} $n$.} $\matr{G}^n$ of a simple graph $\matr{G}$ with $|V(\matr{G})|$,
which is defined to be the graph on the vertex set $V(\matr{G}^n) \isdef V(\matr{G})$, obtained by adding an edge $uv$ if there exists a walk of length $n$ in $\matr{G}$ starting at $u$ and ending at $v$,
can be described as follows,
$$\matr{G}^n = [\matr{P}_{{n}},\matr{G}].$$
Note that $\matr{G}^n$ contains loops when $n$ is even.
An example of the power graph $\matr{G}^3$ is depicted in Figure~\ref{fig:exponential}, where
Figure~\ref{fig:exponential}$(a)$ shows the base graph $\matr{G}$ and a homomorphism of $\matr{P}_{{3}}$ to it, while
Figure~\ref{fig:exponential}$(b)$ shows the graph $\matr{G}^3$ in which the corresponding homomorphism is bolded out.}

 One can construct the directed version of graph power construction (see Figure~\ref{fig:directed}) by
$$\overrightarrow{\matr{G}}^n = [\overrightarrow{\matr{P}}_n,\overrightarrow{\matr{G}}].$$

 \item{{\bf Pultr right adjoint construction} \cite{adjoint, ladjoint},

 Given a Pultr template $\tau = (\matr{P}, \matr{Q}, \eta_{_1}, \eta_{_2})$ (see Example~\ref{exm:basiccylinders}) the \textit{central Pultr functor}
$\Gamma_{\tau}$ is a digraph functor which send any digraph $\matr{H}$ to a digraph $\Gamma_{\tau}(\matr{H})$ , where its vertices are the homomorphisms $g: \matr{P}\rightarrow \matr{H}$, and the arcs of $\Gamma_{\tau}(\matr{H})$ are
pairs $(g_{_1}, g_{_2})$ for which there exists a homomorphism $h : \matr{Q}\rightarrow \matr{H}$ such that
$$\qquad g_{_1} = h \circ \eta_{_1},\quad g_{_2} = h \circ \eta_{_2}.$$
Considering the corresponding cylinder $\matr{C}$ whose bases are isomorphic to $\matr{P}$ (see Example \ref{exm:basiccylinders}), one may verify that for one-to-one graph homomorphisms $\eta_{_1}$ and $\eta_{_2}$, we have $\Gamma_{\tau}(\matr{G}) = [\matr{C},\matr{G}]$.
}
\begin{figure}[ht]
\begin{center}
\begin{tikzpicture}[thick,scale=0.3]
\tikzstyle{every node}=[ inner sep=0pt, minimum width=5pt]
\def\radius{2.5cm}
\node(e1)[circle, draw, fill=red,label=above:{$y_1=z_1$}] at (90:\radius) {} ;
\node(e2)[circle, draw, fill=red,label=right:{$z_2$}] at (300:\radius) {} ;
\node(e3)[circle, draw, fill=red,label=left:{$y_2$}] at (240:\radius) {} ;
\draw[-, thick] (e1) to node [auto]{} (e2);
\draw[-, thick] (e1) to node [auto]{} (e3);
\end{tikzpicture}, \qquad
\begin{tikzpicture}[thick,scale=0.25]
\tikzstyle{every node}=[ inner sep=0pt, minimum width=5pt]
\node (A)[draw,shape=circle,fill=red,label=$v_1$ ]{};
\node (B)[draw,node distance=2.5 cm,right of=A,shape=circle,fill=red,label=$v_{_2}$] {};
\node (C)[draw,node distance=2.5 cm,below of=A,shape=circle,fill=red,label=left:{$v_{_3}$}] {};
\node (D)[draw,node distance=2.5 cm, below of=B,shape=circle,fill=red,label=right:{$v_{_4}$}] {};
\draw[-, thick] (A) to node [auto]{$e_2$} (B);
\draw[-, thick] (A) to node [auto]{$e_1$} (C);
\draw[-, thick] (C) to node [below]{$e_4$} (D);
\draw[-, thick] (B) to node [auto]{$e_3$} (D);
\draw[-, thick] (A) to node [auto]{$e_5$} (D);
\end{tikzpicture}
\end{center}
\caption{The looped line-graph cylinder $\wedge$ and the base graph $\matr{H}$.}
\label{fig:llgexm}
\end{figure}

 \item{\bf The looped line-graph} (\cite{Abbass}, also see Proposition~\ref{pro:basics2}),

 It can be verified that the looped line-graph of a given graph $\matr{H}$ can be described as the exponential
graph $\left[ {\rm \wedge}, \matr{H}\right]_{{\bf S}_{_2}}$.
Note that in this case, there is an extra loop on each vertex of the line-graph, since for each such vertex $v$, there exists a homomorphism from the cylinder ${\rm \wedge}$ to $\matr{H}$ that maps $y$ and $z$ to $v$.

 Let us go through the details of such a construction for the graph $\matr{H}$ depicted in Figure~\ref{fig:llgexm}.
\begin{figure}[ht]
\begin{center}
\begin{scriptsize}
\begin{tabular}{cc}
\begin{tikzpicture}[thick,scale=0.3]
\tikzstyle{every node}=[circle, draw, fill=red, inner sep=0pt, minimum width=5pt]
\def\radius{4cm}
\node(e1)[label=right:{$v_{_1}v_{_2} \sim v_{_2}v_{_1}$}] at (0:\radius) { } ;
\node(e2)[label=right:{$v_{_1}v_{_3} \sim v_{_3}v_{_1}$}] at (72:\radius) {} ;
\node(e3)[label=left:{$v_{_1}v_{_4} \sim v_{_4}v_{_1}$}] at (144:\radius) {} ;
\node(e4)[label=left:{$v_{_2}v_{_4}\sim v_{_4}v_{_2}$}] at (216:\radius) {} ;
\node(e5)[label=left:{$v_{_3}v_{_4} \sim v_{_4}v_{_3}$}] at (288:\radius) {} ;
\node(e1)[label=left:{$v_{_1}v_{_1}$}] at (0:3cm) { } ;
\node(e2)[label=right:{$v_{_2}v_{_2}$}] at (72:3cm) {} ;
\node(e3)[label=right:{$v_{_3}v_{_3}$}] at (144:3cm) {} ;
\node(e4)[label=right:{$v_{_4}v_{_4}$}] at (216:3cm) {} ;
\node(e5)[label=right:{$v_{_2}v_{_3}\sim v_{_3}v_{_2}$}] at (288:3cm) {} ;
\end{tikzpicture}
&\qquad
\begin{tikzpicture}[thick,scale=0.3]
\tikzstyle{every node}=[circle, draw, fill=red, inner sep=0pt, minimum width=5pt]		
\def\radius{3cm}
\node(e1)[label=right:{$\langle v_{_1}v_{_2} \rangle$}] at (0:\radius) {} ;
\node(e2)[label=above:{$\langle v_{_1}v_{_3} \rangle$}] at (72:\radius) {} ;
\node(e3)[label=left:{$\langle v_{_1}v_{_4} \rangle$}] at (144:\radius) {} ;
\node(e4)[label=left:{$\langle v_{_2}v_{_4} \rangle$}] at (216:\radius) {} ;
\node(e5)[label=below:{$\langle v_{_3}v_{_4} \rangle$}] at (288:\radius) {} ;
\end{tikzpicture}\\
$(a)$&$(b)$
\end{tabular}
\end{scriptsize}
\end{center}
\caption{$(a)$ Arrange vertices, $(b)$ Representatives. (for Construct exponential
graph $\left[ {\rm \wedge}, \matr{H}\right]_{{\bf S}_{_2}}$).}
\label{fig:loopedline1}
\end{figure}

\begin{enumerate}
\item First we present the set of vertices corresponding to the equivalence classes (see Figure~\ref{fig:loopedline1}$(a)$).
\item Fixed a set of representatives. Exclude isolated vertices i.e. delete any pair $XY$ for which we can not find a graph homomorphism $\matr{K}_{_2}=\matr{B}^-\rightarrow \matr{H}([X,Y])$ (see Figure~\ref{fig:loopedline1}$(b)$).
\item To specify the edges,
\begin{itemize}
\item Consider a graph homomorphism for which $x_{_1} \mapsto v_{_1} , \quad x_{_2} \mapsto v_{_2},$ and $\quad y_{_1} \mapsto v_{_3}$.
Note that this homomorphism corresponds to the edge $\langle v_{_1}v_{_3}\rangle-\langle v_{_1}v_{_2}\rangle$ as depicted in Figure~\ref{fig:loopedline2}$(a)$.
\item Again, consider a graph homomorphism for which $x_{_1}\mapsto v_{_4} , \quad x_{_2} \mapsto v_{_2},$ and $ \quad y_{_1} \mapsto v_{_1}$,
and consequently, we have the edge $\langle v_{_4}v_{_1}\rangle -\langle v_{_4}v_{_2}\rangle$, however, we have to use the twist $\pi=\lfloor 1,2\rceil$ on both bases
to find the corresponding representatives as $\pi(v_{_4}v_{_1})=v_{_1}v_{_4}$ and $\pi(v_{_4}v_{_2})=v_{_2}v_{_4}$ (see Figure~\ref{fig:loopedline2}$(b)$).
\item By continuing this procedure, we obtain the loop-lined graph as depicted in Figure~\ref{fig:loopedline3}.
\end{itemize}
\end{enumerate}

 \begin{figure}[ht]
\begin{center}
\begin{scriptsize}
\begin{tabular}{cc}
\begin{tikzpicture}[thick,scale=0.3]
\tikzstyle{every node}=[inner sep=0pt, minimum width=5pt]
\def\radius{3cm}
\node(e1)[circle, draw, fill=red,label=right:{$\langle v_{_1}v_{_2} \rangle$}] at (0:\radius) {} ;
\node(e2)[circle, draw, fill=red,label=above:{$\langle v_{_1}v_{_3}\rangle$}] at (72:\radius) {} ;
\node(e3)[circle, draw, fill=red,label=left:{$\langle v_{_1}v_{_4}\rangle$}] at (144:\radius) {} ;
\node(e4)[circle, draw, fill=red,label=left:{$\langle v_{_2}v_{_4} \rangle$}] at (216:\radius) {} ;
\node(e5)[circle, draw, fill=red,label=below:{$\langle v_{_3}v_{_4} \rangle$}] at (288:\radius) {} ;
\draw[->, thick] (e2) to node [auto]{$id,\wedge , id$} (e1);
\end{tikzpicture}
\qquad & \qquad
\begin{tikzpicture}[thick,scale=0.3]
\tikzstyle{every node}=[inner sep=0pt, minimum width=5pt]
\def\radius{3cm}
\node(e1)[circle, draw, fill=red,label=right:{$\langle v_{_1}v_{_2} \rangle$}] at (0:\radius) {} ;
\node(e2)[circle, draw, fill=red,label=above:{$\langle v_{_1}v_{_3}\rangle$}] at (72:\radius) {} ;
\node(e3)[circle, draw, fill=red,label=left:{$\langle v_{_1}v_{_4}\rangle$}] at (144:\radius) {} ;
\node(e4)[circle, draw, fill=red,label=left:{$\langle v_{_2}v_{_4}\rangle$}] at (216:\radius) {} ;
\node(e5)[circle, draw, fill=red,label=below:{$\langle v_{_3}v_{_4}\rangle$}] at (288:\radius) {} ;
\draw[->, ] (e2) to node [auto]{$id,\wedge , id$} (e1);
\draw[->, thick] (e3) to node [left]{$\pi,\wedge , \pi$} (e4);
\end{tikzpicture}
\\
$(a)$&$(b)$
\end{tabular}
\end{scriptsize}
\end{center}
\caption{Building edges of the exponential
graph $\left[ {\rm \wedge}, \matr{H}\right]_{{\bf S}_{_2}}$.}
\label{fig:loopedline2}
\end{figure}
\begin{figure}[ht]
\centering{
\begin{tabular}{cc}
\begin{tikzpicture}[thick,scale=0.5]
\tikzstyle{every node}=[inner sep=0pt, minimum width=6pt]
\def\radius{4cm}
\node(ab)[circle, draw, fill=red,label=right:{$\langle v_{_1}v_{_2} \rangle$}] at (0:\radius) {} ;
\node(ac)[circle, draw, fill=red,label=above:{$\langle v_{_1}v_{_3} \rangle$}] at (72:\radius) {} ;
\node(ad)[circle, draw, fill=red,label=left:{$\langle v_{_1}v_{_4}\rangle$}] at (144:\radius) {} ;
\node(bd)[circle, draw, fill=red,label=left:{$\langle v_{_2}v_{_4} \rangle$}] at (216:\radius) {} ;
\node(cd)[circle, draw, fill=red,label=below:{$\langle v_{_3}v_{_4} \rangle$}] at (288:\radius) {} ;
\draw[<->, thick ] (ab) to node [above,sloped]{$id,\wedge , id \qquad\qquad$} (ad);
\draw[<->, thick ] (ac) to node [above,sloped]{$id,\wedge , id$} (ad);
\draw[<->, thick ] (ac) to node [above,sloped]{$id,\wedge , id$} (ab);
\draw[<->, thick ] (cd) to node [above,sloped]{$\qquad\qquad\qquad id,\wedge , \pi$} (ac);
\draw[<->, thick ] (ad) to node [above,sloped]{$\pi,\wedge , \pi \qquad$} (cd);
\draw[<->, thick ] (bd) to node [above,sloped]{$\pi ,\wedge , \pi $} (ad);
\draw[<->, thick ] (ab) to node [below,sloped]{$\qquad\qquad\qquad\pi ,\wedge , id$} (bd);
\draw[<->, thick ] (bd) to node [below,sloped]{$\pi ,\wedge , \pi$} (cd);
\draw[->, thick ,loop below,distance=2cm] (bd) to node{$id,\wedge ,id$} (bd);
\draw[->, thick ,loop right,distance=2cm] (ac) to node {$id,\wedge ,id$} (ac);
\draw[->, thick ,loop below,distance=2cm] (ab) to node {$id,\wedge ,id$} (ab);
\draw[->, thick ,loop above,distance=2cm] (ad) to node {$id,\wedge ,id$} (ad);
\draw[->, thick ,loop right,distance=2cm] (cd) to node {$id,\wedge ,id$} (cd);
\end{tikzpicture}
\end{tabular}}
\caption{The exponential graph $\left[ {\rm \wedge},\matr{H}\right]$ .}
\label{fig:loopedline3}
\end{figure}

 \item {\bf A directed example}

 Figure~\ref{fig:directed} shows a directed graph $\matr{T}$ and the exponential graph $[\vec{\matr{P}}_{_{2}},\matr{T}]$
(here note that we have excluded all default values of the parameters involved in our notation).
\end{itemize}

 }\end{exm}

\begin{figure}[ht]
\begin{center}
\begin{tabular}{c c}
\includegraphics{./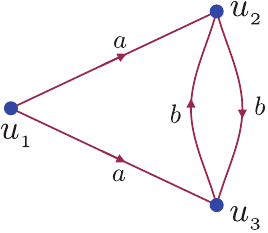} & \includegraphics{./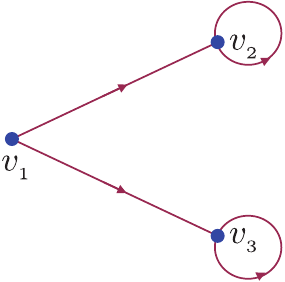} \\
\vspace{-0.5 cm}\cr {$(a)$} & {$(b)$}
\end{tabular}
\end{center}
\caption{$(a)$ The directed graph $\matr{T}$, \ \ $(b)$ The graph $[\vec{\matr{P}}_{{2}},\matr{T}]$.}
\label{fig:directed}
\end{figure}

 \subsection{The cylindrical construction}

 Let $\matr{C} \isdef \{\matr{C}^{^{j}}(\by^{^{j}},\bz^{^{j}}, \J ^{^{j}}) \ \ | \ \ j \in \nat[m]\}$ be a set
of $\Gamma$-coherent $(t,k)$-cylinders, where each $\matr{C}^{^{j}}$ is an $\ell_{_{j}}$-graph.
Also, let $\matr{G}$ be a $(\Gamma,m)$-graph labeled by the map $\ell_{_{\matr{G}}}$. Then the
{\it cylindrical product} of $\matr{G}$ and $\matr{C}$ is an amalgam that, intuitively, can
be described as the graph constructed by replacing each edge of $\matr{G}$
whose label is $(\gamma^-,j,\gamma^+)$, by a copy of the cylinder
$\matr{C}^{^{j}}$ twisted by $(\gamma^-,\gamma^+)$, while we identify (the vertices of the) bases of the cylinders that intersect at the position of
each vertex of $\matr{G}$. Formally, this construction denoted by $\matr{G} \cyl{\Gamma} \matr{C}$ can be defined as,
\begin{equation}
\matr{G} \cyl{\Gamma} \matr{C} \isdef \sum_{uv \in E(\matr{G})\ : \ \ell_{_{\matr{G}}}(uv)=(\gamma^u,j,\gamma^v)} \matr{C}^{^{j}}(\bu^{^{j}}\gamma^u,\bv^{^{j}}\gamma^v, (\gamma^u,\gamma^v)\J ^{^{j}}).
\end{equation}

 To be more descriptive, let us describe the construction in an algorithmic way as follows.
Let $V(\matr{G}) = \{v_{_{1}},v_{_{2}},\ldots,v_{_{n}}\}$.
Then the graph $\matr{G} \cyl{\Gamma} \matr{C}$ is constructed on $\matr{G}_{_{0}}$ as the
graph that is generated by the following algorithm,
\begin{itemize}
\item[$1.$]{{\bf Blowing up vertices:} (This step is just for clarification and may be omitted.)\\
Consider the empty graph $\matr{G}_{_{0}}$ with
$$V(\matr{G}_{_{0}}) \isdef \displaystyle{\bigcup_{j=1}^{n}} \ \{v_{_{1}}^{^{j}},v_{_{2}}^{^{j}},\ldots,v_{_{k}}^{^{j}}\},$$
and $E(\matr{G}_{_{0}})=\emptyset$. }
\item[$2.$]{{\bf Choosing the cylinders:} For every edge $v_{_{p}}v_{_{q}} \in E(\matr{G})$ with the label
$\ell_{_{\matr{G}}}(v_{_{p}}v_{_{q}})=(\gamma^-,j,\gamma^+)$, choose a twisted cylinder
$\matr{C}^{^{j}}(\by^{^{j}}\gamma^-,\bz^{^{j}}\gamma^+)$ in which the vertices of the bases are {\it not} identified
according to $\J ^{^j}$) to $\matr{G} \cyl{\Gamma} \matr{C}$ along with
all its vertices and edges yet. Re-mark the vertices in each base and construct the cylinder
$\matr{C}^{^{j}}(\bv^{^{p}},\bv^{^{q}})$ for which
\begin{itemize}
\item{Let $\bv^{^{p}}=\by^{^{j}}\gamma^-$,}
\item{Let $\bv^{^{q}}=\bz^{^{j}}\gamma^+$.}
\end{itemize}
Consider the set $\matr{G}^*$ of disjoint union of all these cylinders constructed for each edge.
}
\item[$3.$]{{\bf Identification:} Consider the amalgam constructed on $\matr{G}_{_{0}} \cup \matr{G}^*$ as a marked graph and identify
all vertices first with respect to markings and then with respect to $\J ^{^j}$'s. In the end, identify all
multiple edges that have the same label.}
\end{itemize}
Note that given any
edge $e \in E(\matr{G}\cyl{\Gamma} \matr{C})$, there exists an index $j \in \nat[m]$ such that $e \in E(\matr{C}^{^{j}})$, and one may assign a well defined labeling
$$\ell_{_{\matr{G}\cyl{\Gamma} \matr{C}}}(e) \isdef \ell_{_{j}}(e).$$
Consequently, $\matr{G}\cyl{\Gamma} \matr{C}$ is an $\ell_{_{\matr{G}\cyl{\Gamma} \matr{C}}}$-graph.

 Also, as in Definition~\ref{defin:exponentialgraph}, if $\matr{C}$ is a set
of $\Gamma$-coherent {\it symmetric} $(t,k)$-cylinders, then for any symmetric graph $\matr{G}$, one may talk about the {\it symmetric} cylindrical construct $\matr{G}\scyl{\Gamma} \matr{C}$. Note that if $\matr{C}$ is directed, then the symmetric construct $\matr{G}\scyl{\Gamma} \matr{C}$ is also a directed graph although $\matr{G}$ is symmetric.\\ \ \\
\noindent {\bf Notation.} If $V(\matr{G}) = \{v_{_{1}},v_{_{2}},\ldots,v_{_{n}}\}$, hereafter the vertex set of the cylindrical construct
$\matr{G}\cyl{\Gamma} \matr{C}$ is assumed to be $V(\matr{G}\cyl{\Gamma} \matr{C}) = \bigcup_{j=1}^m \{v_{_{1}}^{^{j}},v_{_{2}}^{^{j}},\ldots,v_{_{k}}^{^{j}}\}$
where the superscript refers to the index of the corresponding vertex in $V(\matr{G})$ and, moreover, we may refer to the list
$\bv^{^{j}} \isdef (v_{_{1}}^{^{j}},v_{_{2}}^{^{j}},\ldots,v_{_{k}}^{^{j}})$. Also, we assume that in each cylinder, the set of vertices that do not appear in the vertex sets of the bases are enumerated (say from $1$ to $r$), and for each fixed edge $e \in E(\matr{G})$, we may refer to the list
$\bu^{{e}} \isdef (u_{_{1}}^{{e}},u_{_{2}}^{{e}},\ldots,u_{_{r}}^{{e}})$ consisting of non-base vertices that appear in $\matr{G}\cyl{\Gamma} \matr{C}$ when the edge $e$ is replaced by the corresponding cylinder.
\begin{figure}[ht]
\centering{\begin{tabular}{c c}
\includegraphics[width=4cm]{./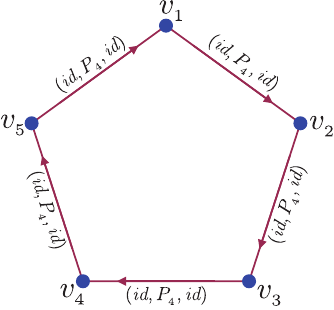} & \includegraphics[width=4cm]{./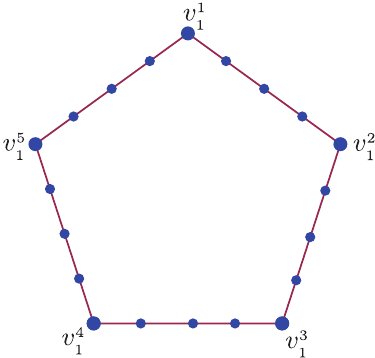} \\
\vspace{-0.5cm}\cr {$(a)$} & {$(b)$}
\end{tabular}}
\caption{$(a)$ An $(\Ident,1)$-graph $\matr{C}_{_{5}}$, $(b)$ The $4$-subdivision $\matr{C}_{_{5}}\cyl{} \matr{P}_{{4}}$.}
\label{fig:sub_prod}
\end{figure}

 \begin{exm}{ \label{exm:cylop} In this example we go through some basic and well-known special cases.
\begin{itemize}
\item{{\bf The identity relation}

 It is clear that the identity cylinder satisfies the identity property $\matr{G} \cyl{} \matr{I} = \matr{G}$ in both directed and symmetric cases (see Example~\ref{exm:basiccylinders}).
Note that, in general, cylindrical products using the identity cylinder gives rise to lifts of graphs (e.g. see \cite{MSS}
and references therein for the details, applications and background. Also see Figure~\ref{fig:cylc}.).
}

 \item{{\bf The replacement operation} (e.g. see \cite{HENE} and references therein)

 The cylindrical construction $\matr{G}\cyl{} \matr{C}$
for a $1$-cylinder $\matr{C}=(y,z)$ is the standard replacement operation.
}
\item{{\bf The fractional power}

 It is easy to see that $\matr{G}\cyl{} \matr{P}_{{n}}$ gives rise
to an $n$-subdivision of $\matr{G}$ which can be described as subdividing every edge of $\matr{G}$ by $n-1$ vertices (see Figure~\ref{fig:sub_prod} for an example of this case). Also, note that one may describe the fractional power
$\matr{G}^{^{\frac{m}{n}}}$ as follows (see \cite{haji} for the definition and more details),
$$\matr{G}^{^{\frac{m}{n}}} \isdef [\matr{P}_{{m}},\matr{G}\cyl{} \matr{P}_{{n}}].$$}

 \item{{\bf Fiber construction} \cite{fiber, fiber1},

 Let $\matr{M}$ be a fiber gadget (see Example~\ref{exm:basiccylinders}) with $W^{^a}$ and $W^{^b}$ (as bases), and let $\matr{G}$ be a graph. Also, for each vertex $v$ of $\matr{G}$, let $W^{^v}$ be a copy of $W^{^*}$, and for any edge $uv$ of $\matr{G}$,
let $\matr{M}_{_{uv}}$ be a copy of $\matr{M}$. Then, identifying $W^{^u}$ and $W^{^v}$ with the copies of $W^{^a}$ and $W^{^b}$, respectively, in $\matr{M}$, we get a graph that is the fiber construction $\matr{M}(\matr{G})$, and can be represented as
$$\matr{M}(\matr{G})=\matr{G}\cyl{}\matr{C}(\matr{M}).$$
}
\item{{\bf Pultr left adjoint construction} \cite{adjoint, ladjoint},

 Given a Pultr template $\tau = (\matr{P}, \matr{Q}, \eta_{_1}, \eta_{_2})$ (see Example~\ref{exm:basiccylinders}), the \textit{left Pultr functor}
$\Lambda_{\tau}$ is a digraph functor which send any digraph $\matr{G}$ to a digraph $\Lambda_{\tau}(\matr{G})$, whose
vertices are the copies $\matr{P}_u$ for any vertex $u\in V(\matr{G})$, and for the arcs,
$\Lambda_{\tau}(\matr{G})$ contains a copy $Q_{uv}$ of $Q$ for any arc
$uv\in E(\matr{G})$, where $\eta_{_1}[\matr{P}]$ with $\matr{P}_u$
and $\eta_{_2}[\matr{P}]$ with $\matr{P}_v$.
Considering the corresponding cylinder $\matr{C}$ whose bases are isomorphic to $\matr{P}$, one may verify that
$\Lambda_{\tau}(\matr{G}) = \matr{G} \cyl{} \matr{C}$.

 Note that the fiber gadget is a special instance of Pultr template, and fiber construction is a special instance of
Pultr left adjoint construction.
}
\item{{\bf The join operation}\\
The join operation $\matr{G} \nabla \matr{H}$ is a graph which is the union of two graph $\matr{G}$ and $\matr{H}$ with additional edges between any vertex of $\matr{G}$ and any vertex of $\matr{H}$.

 Let $\matr{G},\matr{H}$ be two graphs. Then one can show that if $\matr{H}_{\nabla}$ be a cylinder (see Figure ~\ref{fig:join}$(a)$ and ~\ref{fig:join}$(b)$) with $\matr{B}^-= \matr{H}\nabla y$, $\matr{B}^+= \matr{H}\nabla z$, $\J=\{(h,h) | h\in V(\matr{H}) \}$ and we have an extra edge $yz$, then $$\matr{G}\cyl{} \matr{H}_{\nabla} \simeq \matr{G} \nabla \matr{H}.$$
\begin{figure}[ht]
\centering{\begin{tabular}{ccc}
\includegraphics{./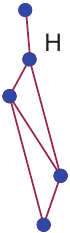} &
\includegraphics{./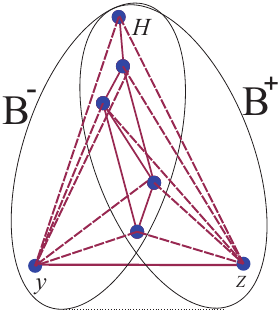}&
\includegraphics{./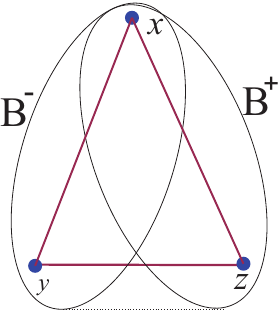} \\
\vspace{-0.5cm}\cr {$(a)$} & {$(b)$}& {$(c)$}
\end{tabular}}
\caption{$(a)$ A base graph $\matr{H}$, $(b)$ The join cylinder $\matr{H}_{\nabla}$, $(c)$ The cylinder $\triangle$. }
\label{fig:join}
\end{figure}}
\item{{\bf The universal vertex construction}

 One may verify that the result of the cylindrical construction $\matr{G}\cyl{} {\rm \triangle}$ (see Figure ~\ref{fig:join}$(c)$)
can be described as adding a universal vertex to the graph $\matr{G}$ (i.e. a new vertex that is adjacent to each vertex in $V(\matr{G})$).}

 \item{{\bf Deletion and contraction operations}

 Let $\matr{d}(y,z)$ and $c=\matr{P}_{_0}$ be deletion and contraction cylinders respectively, and $\matr{G}$ be a graph labeled by
$\{\matr{d},\matr{c},\matr{I}\}$ and define $\matr{C} \isdef \{\matr{d}(y,z), \matr{P}_{_0}, \matr{I}(y,z)\}$.
Then $\matr{G}\cyl{}\matr{C}$ is a graph obtained from $\matr{G}$ in which edges labeled by $\matr{c}$ are contracted
and edges labeled by $\matr{d}(y,z)$ are deleted. Hence, any minor is a cylindrical construct.}

 \item{{\bf Generalized loop cylinders}

 Let $\matr{C}$ be a $\Gamma$-cylinder, and also let $\matr{L}$ be a loop on a vertex $v$
with the label $(\lambda, \gamma)\in \Gamma \times \Gamma$. Then we define
$$Gl_{_{(\lambda, \gamma)}}(\matr{C}) \isdef \matr{L}\cyl{\Gamma}\matr{C}$$ which is a generalized loop cylinder.
Note that by definition, any generalized loop cylinder can be constructed as mentioned above.

 }

 \item{{\bf The role of twists in cylindrical construction}

 As depicted in Figure \ref{fig:cylc}, different labelings of $\matr{K}_{_3}$
in $\matr{K}_{_3} \cyl{\matr{S}_{_2}} \matr{I}_{_2}$
can lead to different cylindrical constructions. This example shows that the choice of labeling can even affect the connectivity of the product graph.

 \item{
One can verify that,
\begin{prepro}{\label{pro:plusu} For any $\Gamma$-graph $\matr{G}$, we have
\ \\
\begin{itemize}
\item[{\rm a)}]{For $(t,k)$-cylinders $\matr{A}$ and $\matr{B}$,
$$\matr{G}\cyl{\Gamma} (\matr{A}+\matr{B}) \simeq \matr{G}\cyl{\Gamma} \matr{A}+\matr{G}\cyl{\Gamma} \matr{B}.$$
}
\item[{\rm b)}]{For $(t,k)$-cylinder $\matr{A}$ and $(t',k')$-cylinder $\matr{B}$, $\Gamma_{_1} \leq \textbf{S}_{_{k}},\ \Gamma_{_2} \leq \textbf{S}_{_{k'}}$ and $\Gamma \isdef \Gamma_{_1}\sqcup \Gamma_{_2}$, then,
$$\matr{G}\cyl{\Gamma} (\matr{A}\Cup \matr{B}) \simeq (\matr{G}\cyl{\Gamma_{_2}} \matr{A}) \sqcup (\matr{G}\cyl{\Gamma_{_2}} \matr{B}),$$
where $\sqcup$ is the disjoint union of two graphs.}
\end{itemize}
}\end{prepro}

}

 }\end{itemize}}
\end{exm}
\begin{figure}[ht]
\centering{
\includegraphics[scale=.65]{./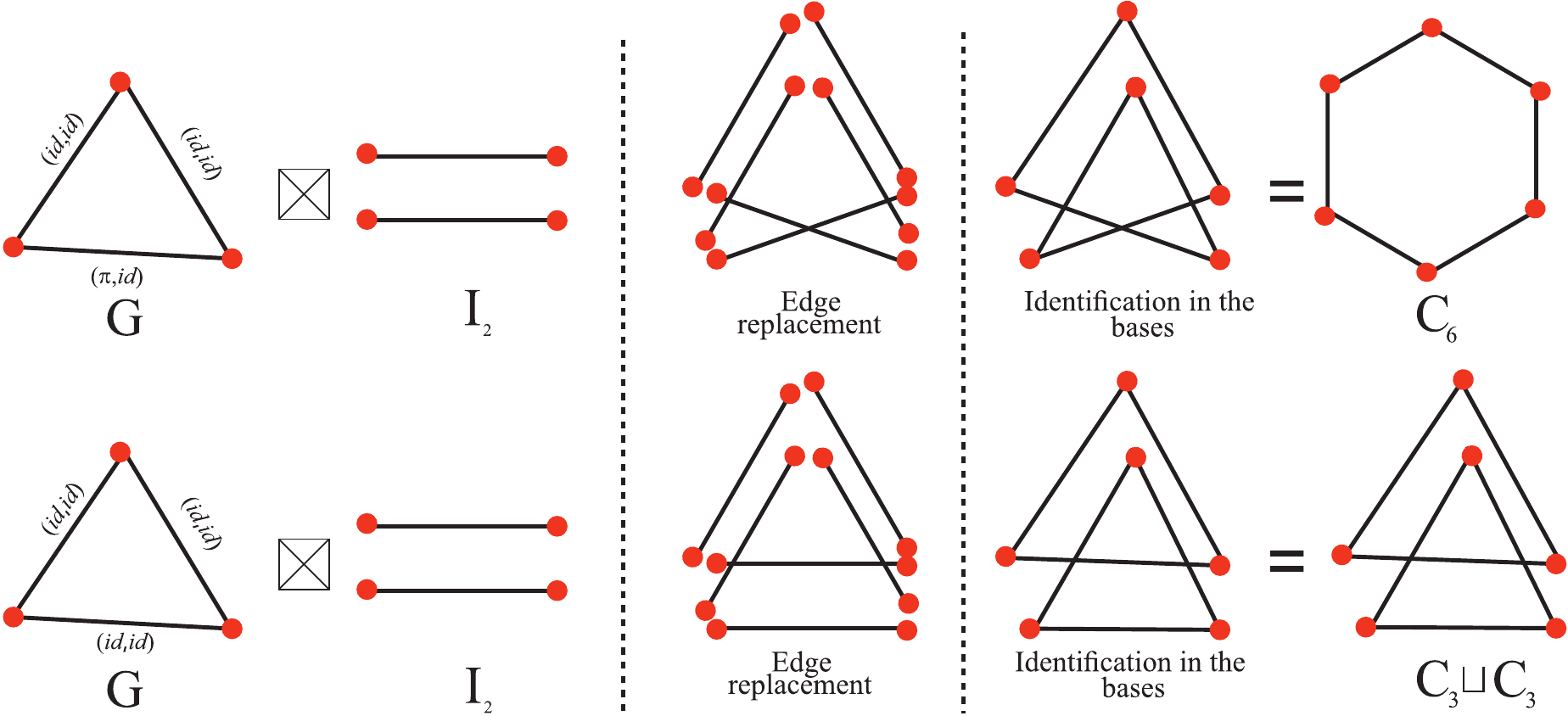}}
\caption{The role of twists in cylindrical construction.}
\label{fig:cylc}
\end{figure}
\begin{exm}{\label{exm:petersen}
In this example we elaborate on different descriptions of the Petersen graph using cylindrical constructions, where in this regard we will
also describe any {\it voltage graph construction} as a special case of a cylindrical construction. This specially shows that a graph may have many different descriptions as a cylindrical product.
\begin{itemize}
\item{{\bf Petersen graph and the $\sqcap$-cylinder}\\
Consider the complete graph $\widehat{\matr{K}}_{_{5}}$ of
Figure~\ref{fig:petersen_prod}$(a)$ as a $({\bf S_{_{2}}},1)$-graph, and note that the cylindrical construction
$(\widehat{\matr{K}}_{_{5}} \cyl{{\bf S_{_{2}}}} {\sqcap})$ gives rise to the Petersen graph as depicted in Figure~\ref{fig:petersen_prod}$(b)$.
\begin{figure}[ht]
\centering{
\begin{tabular}{c c}
\includegraphics{./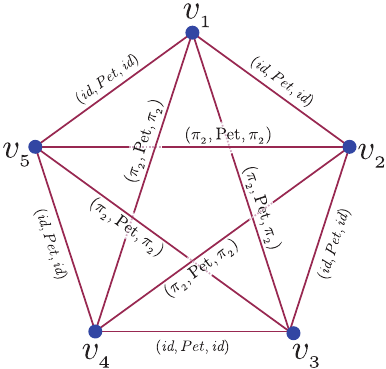} & \includegraphics{./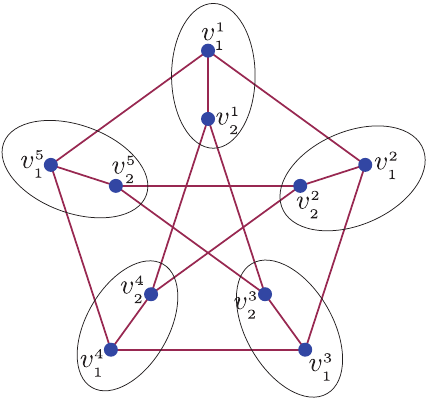} \\
\vspace{-0.5cm}\cr {$(a)$} & {$(b)$}
\end{tabular}
}
\caption{$(a)$ The base graph $\widehat{\matr{K}}_{_{5}}$, $(b)$ The Petersen graph as $\widehat{\matr{K}}_{_{5}} \cyl{{\bf S_{_{2}}}}{\sqcap}$.}
\label{fig:petersen_prod}
\end{figure}
}
\item{{\bf Petersen graph and the voltage graph construction} \\
Let $A$ be a set of $k$ elements and $\Omega$ be a group of $n$ elements that acts on $A$ from the right.
\begin{figure}[ht]
\begin{center}
\begin{tabular}{c c}
\includegraphics[width=4cm]{./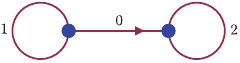} & \includegraphics[width=4cm]{./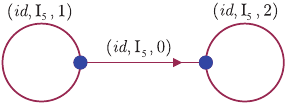} \\
\vspace{-0.5cm}\cr {$(a)$} & {$(b)$}
\end{tabular}
\end{center}
\caption{$(a)$ Petersen Voltage graph, $(b)$ The corresponding $(\Gamma,m)$-graph.}
\label{fig:petersen_voltage}
\end{figure}
Then it is clear that one may identify
the set $A$ as $A=\{a_{_{1}},a_{_{2}},\ldots,a_{_{k}}\}$ and $\Omega$ as a permutation group acting on the index set $\nat[k]$.\\
An {\it action voltage graph} is a (directed) labeled graph $(\matr{G},\ell_{_{\matr{G}}}: E(\matr{G}) \longrightarrow \Omega)$
whose edges are labeled by the elements of the group $\Omega$. The {\it derived graph} of
$\matr{G}$, denoted by $\tilde{\matr{G}}$, is a graph on the vertex set $V(\tilde{\matr{G}}) = V({\matr{G}}) \times A$, where
there is an edge between $(u,a_{_{i}})$ and $(v,a_{_{j}})$ if $e=uv \in E(\matr{G})$ and
$a_{_{j}}=a_{_{i}}\ell_{_{\matr{G}}}(e)$. \\
When $\Omega$ acts on itself on the right, then $A=\Omega$ and the action voltage graph is simply called the {\it voltage graph},
when we usually refer to the corresponding derived graph as the {\it voltage graph construct} in this case. It is also easy to verify that
Cayley graphs are among the well-known examples of voltage graphs (e.g. see \cite{HENE} for more on voltage graphs and related topics).\\
Now, we show that action voltage graphs and their derived graphs are cylindrical constructs.
For this, given an action voltage graph $(\matr{G},\ell_{_{\matr{G}}}: E(\matr{G}) \longrightarrow \Omega)$, we define the corresponding
$(\Omega,1)$-graph $\widehat{\matr{G}}$ as a graph with the same vertex and edge sets as $\matr{G}$, and the labeling
$$\forall\ e \in E(\widehat{\matr{G}}) \quad \ell_{_{\widehat{\matr{G}}}}(e) =
(\ident,\matr{I}_{_{k}},\rho) ~ \Leftrightarrow ~ \ell_{_{\matr{G}}}(e)=\rho,$$
where $\matr{I}_{_{k}}$ is the identity cylinder on $k$ vertices (see Figure~\ref{fig:voltage_cylinder}).
Then it is easy to see that $\tilde{\matr{G}}=\widehat{\matr{G}} \cyl{\Omega} \matr{I}_{_{k}}.$
Figure \ref{fig:petersen_voltage} shows the voltage graph and the corresponding $({\bf Z}_{_{5}},1)$-graph to construct the Petersen graph using this construction.}
\end{itemize}
\begin{figure}[ht]
\centering{
\begin{tabular}{c c c c}
\includegraphics[width=1.5cm]{./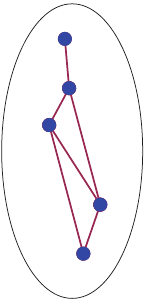} & \includegraphics[width=4cm]{./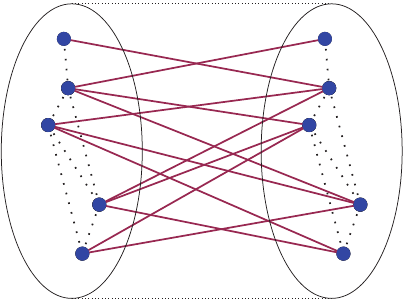} &\includegraphics[width=4cm]{./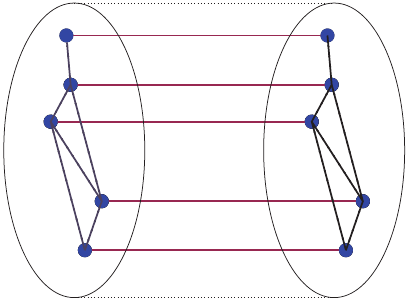} \\
\vspace{-0.5cm}\cr {$(a)$} & {$(b)$} & {$(c)$}
\end{tabular}}
\caption{$(a)$ The graph $\matr{G}$, $(b)$ The cylinder $\matr{C}_{_{\times}}(\matr{G})$, $(c)$ The cylinder $\matr{C}_{_{\Box}}(\matr{G})$. }
\label{fig:neps}
\end{figure}
}\end{exm}
\begin{exm}{\label{exm:neps}{\bf NEPS of simple graphs}\\
In this example we claim that any NEPS of simple graphs can be described as a cylindrical construction (e.g. see \cite{HENE} for the definition and more details).
We concentrate on the more classical cases of Cartesian product $\matr{G} \Box \matr{H} = {\rm NEPS}_{\{(0,1),(1,0)\}}(\matr{G},\matr{H})$ and the categorical product $\matr{G} \times \matr{H} = {\rm NEPS}_{\{(1,1)\}}(\matr{G},\matr{H}) $.
Note that, the basic idea of generalizing the whole construction to arbitrary components or arbitrary NEPS Boolean sets is straight forward by noting the fact that the adjacency matrix of a NEPS of graphs is essentially constructible through suitable tensor products, using identity matrix and the adjacency matrices of the components.

 For this consider a graph $\matr{G}$ with $V(\matr{G})=\{v_{_{1}}, \ldots ,v_{_{\nu}}\}$. Then define cylinders $\matr{C}_{_{\Box}}(\matr{G})(\by,\bz)$ and $\matr{C}_{_{\times}}(\matr{G})(\by,\bz)$ where the base of $\matr{C}_{_{\Box}}(\matr{G})(\by,\bz)$ is
isomorphic to $\matr{G}$, base of $\matr{C}_{_{\times}}(\matr{G})(\by,\bz)$ is an empty graph on $V(\matr{G})$,
and the edges are described as follows (see Figure~\ref{fig:neps}),
$$y_{_{i}}z_{_{j}} \in E(\matr{C}_{_{\Box}}(\matr{G})(\by,\bz)) \quad \Leftrightarrow \quad i=j,$$
$$y_{_{i}}z_{_{j}} \in E(\matr{C}_{_{\times}}(\matr{G})(\by,\bz)) \quad \Leftrightarrow \quad v_{_{i}} v_{_{j}}\in E(\matr{G}).$$
Then one may verify that
$$ \matr{G} \Box \matr{H} = \matr{H} \cyl{} \matr{C}_{_{\Box}}(\matr{G}) \quad {\rm and} \quad \matr{G} \times \matr{H}= \matr{H} \cyl{} \matr{C}_{_{\times}}(\matr{G}). $$
}\end{exm}
The following is a classical result in the literature (see \cite{HENE}) for the graph $\matr{H}^\matr{G} $ defined as,
$$V(\matr{H}^\matr{G})=\{f:V(\matr{G}) \rightarrow V(\matr{H})\},$$
$$fg\in E(\matr{H}^\matr{G}) \Leftrightarrow \forall vw\in E(\matr{G}) \quad f(v)g(w) \in E(\matr{H}).$$
\begin{pro}{\label{pro:ppower}
For any pair of simple graphs $\matr{H}$ and $\matr{G}$, we have
$\matr{H}^\matr{G} \simeq [\matr{C}_{\times}(\matr{G}),\matr{H}].$
}\end{pro}
\begin{proof}{
Let $V(\matr{G})=\{v_{_1}, v_{_2}, \dots , v_{_n}\}$ and $|V(\matr{H})|=m$. There is a canonical graph isomorphism between two constructions,
$$\theta : \matr{H}^{\matr{G}} \rightarrow [\matr{C}_{\times}(\matr{G}),\matr{H}],$$
where for any $f \in V(\matr{H}^{\matr{G}})$ we define
$$\theta(f)=(f(v_{_1}), f(v_{_2}), ...,f(v_{_n})) \in V([\matr{C}_{\times}(\matr{G}),\matr{H}]).$$
Now, if $fg \in V(\matr{H}^{\matr{G}})$, then by definition $\forall vw\in E(\matr{G}),\ f(v)g(w) \in E(\matr{H})$, hence in the $\matr{H}$ we can find a cylinder $\matr{C}_{\times}(\matr{G})$ which $\matr{B}^-=(f(v_{_1}), f(v_{_2}), ...,f(v_{_n}))$, and $\matr{B}^+=(g(v_{_1}), g(v_{_2}), ...,g(v_{_n}))$. Thus we have an edge $\theta(f) \theta(g) \in E([\matr{C}_{\times}(\matr{G}),\matr{H}]).$
So $\Theta$ is a graph homomorphism and it is easy to check that it is an graph isomorphism.
}\end{proof}

 \begin{figure}[ht]
\centering{\includegraphics[width=6.5cm]{./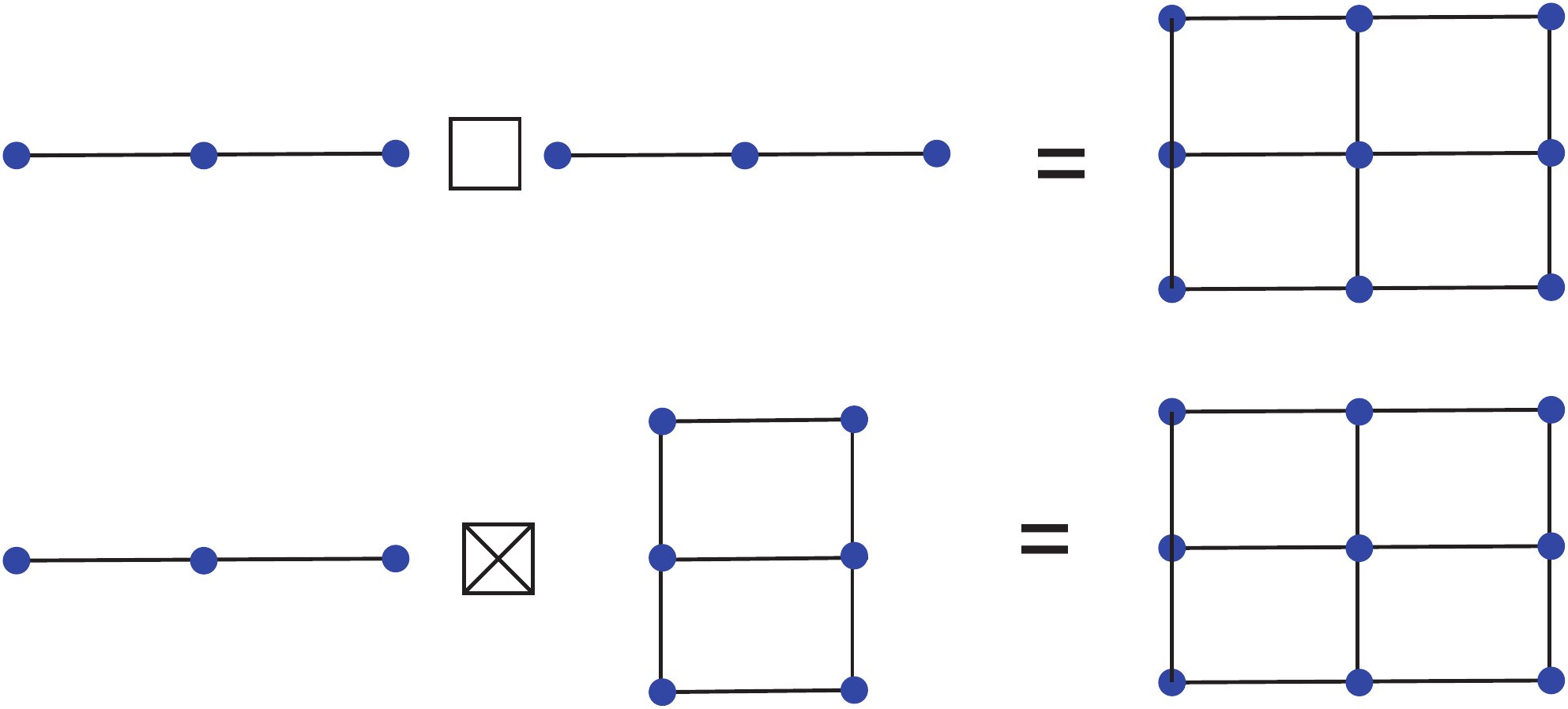}}
\caption{The Cartesian product of two 2-paths and the corresponding cylindrical construction.}
\label{fig:cartesian}
\end{figure}

 \begin{figure}[ht]
\centering{
\includegraphics[width=6.5cm]{./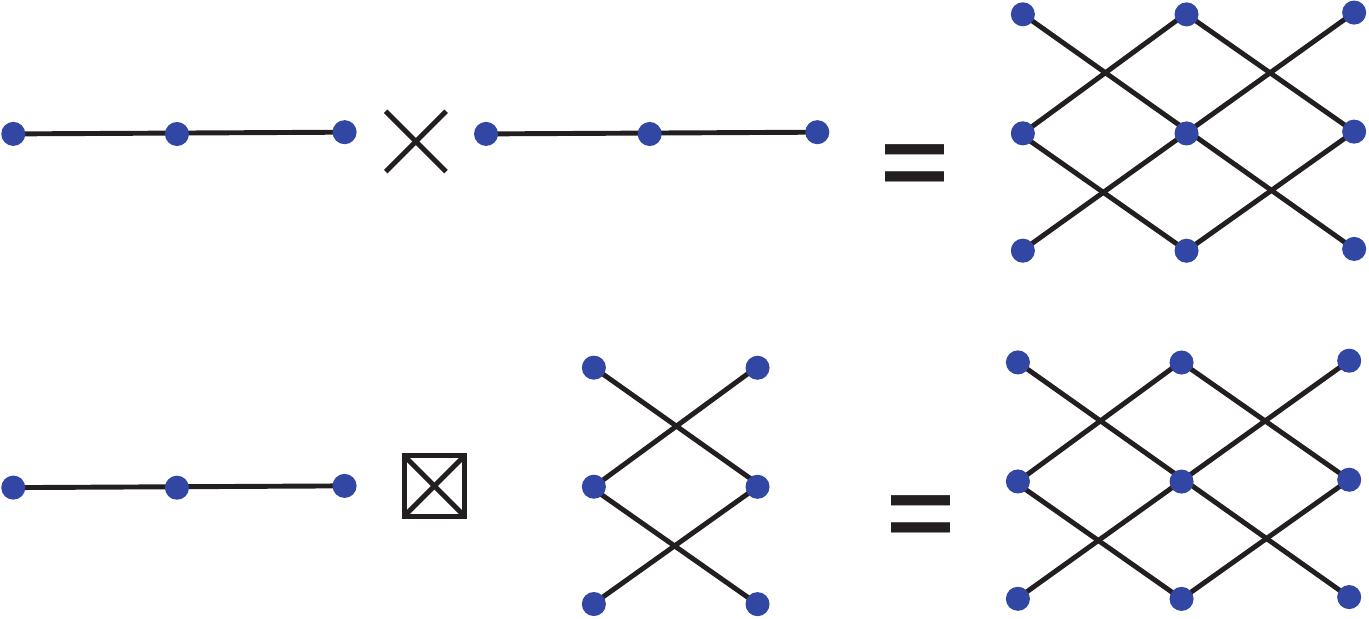}
}
\caption{The categorical product of two 2-paths and the corresponding cylindrical construction.}
\label{fig:catproduct}
\end{figure}
Note that similarly one may show that the strong product and the lexicographical product can be described by cylinders $\matr{C}_{\cyl{}}(\matr{G})$ and $\matr{C}_{\bullet}(\matr{G})$, respectively.
Their bases are isomorphic to $\matr{G}$, and their edges are described as follow (see \cite{imrich} and Figure~\ref{fig:strong}),
$$i=j \quad \mathrm{or} \quad v_{_{i}} v_{_{j}} \in E(\matr{G}) \Leftrightarrow \quad y_{_{i}}z_{_{j}} \in E(\matr{C}_{_{\bullet}}(\matr{G})(\by,\bz)) ,$$
$$i,j \in \nat[k]\quad \Leftrightarrow \quad y_{_{i}}z_{_{j}} \in E(\matr{C}_{_{\cyl{}}}(\matr{G})(\by,\bz)).$$
\begin{figure}[ht]
\centering{\includegraphics[width=9cm]{./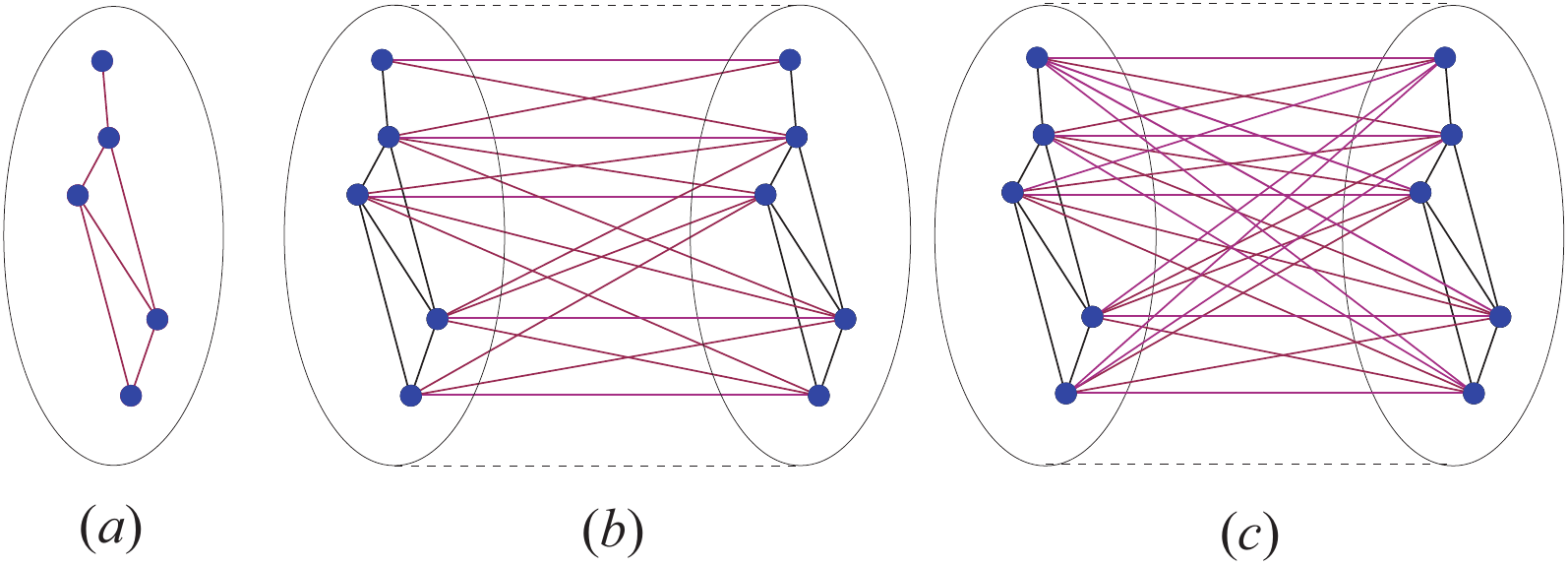}}
\caption{$(a)$ The graph $\matr{G}$, $(b)$ The lexicographical product cylinder $\matr{C}_{\bullet}(\matr{G})$, $(c)$ The strong product cylinder $\matr{C}_{\cyl{}}(\matr{G})$.}
\label{fig:strong}
\end{figure}

 \begin{exm}{{ \bf The zig-zag product of symmetric graphs} \\
In this example we show that the zig-zag product of graphs can also be presented as a cylindrical construct. We introduce two different presentations of this construction in what follows (e.g. see \cite{HENE} for more details and the background).\\
Let $(\matr{G},\ell_{_{\matr{G}}}: E(\matr{G}) \longrightarrow D^2)$ be a symmetric $d$-regular labeled graph on the vertex set $V(\matr{G})$, whose edges are labeled by the set $D^2$ where $D \isdef \nat[d]$ in a way that
$$\forall\ v \in V(\matr{G}), \quad \{\ell_{_{\matr{G}}}(e) \ \ | \ \ e^-=v\} = D.$$
Then, any such graph can be represented by a {\it rotation} map as
$${\rm Rot_{_{\matr{G}}}} : V(\matr{G}) \times D \longrightarrow V(\matr{G}) \times D,$$
for which
$${\rm Rot_{_{\matr{G}}}}(u,i)=(v,j) \ \ \Leftrightarrow \ \ \exists\ e=uv \in \ E(\matr{G}) \quad \ell_{_{\matr{G}}}(e)=(i,j).$$
Let $\matr{G}$ be a symmetric $d$-regular graph on the vertex set $V(\matr{G})$, and also, let $\matr{H}$ be a symmetric $k$-regular graph
on the vertex set $D$, respectively, given by rotation maps ${\rm Rot_{_{\matr{G}}}}$ and ${\rm Rot_{_{\matr{H}}}}$. Then the zig-zag product of $\matr{G}$ and $\matr{H}$, denoted by $\matr{G} \otimes_{_{z}} \matr{H}$ is a symmetric $k^2$-regular graph defined on the vertex set
$V(\matr{G}) \times D$ for which
$${\rm Rot_{_{\matr{G} \otimes_{_{z}} \matr{H}}}}((v,h),(i,j))=((u,l),(j',i')),$$
if and only if
$${\rm Rot_{_{\matr{H}}}}(h,i)=(h',i'), \quad {\rm Rot_{_{\matr{G}}}}(v,h')=(u,l'), \quad {\rm and} \ \ {\rm Rot_{_{\matr{H}}}}(l',j)=(l,j').$$
A {\it vertex transitive zig-zag product} $\matr{G} \otimes_{_{z}} \matr{H}$ is such a product when $\matr{H}$ is a vertex transitive graph.
\begin{figure}[ht]
\centering{
\begin{tabular}{c c}
\includegraphics[width=3.5cm]{./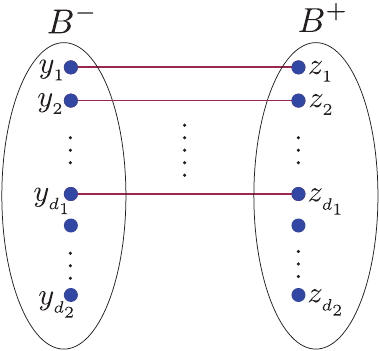} & \includegraphics[width=3.5cm]{./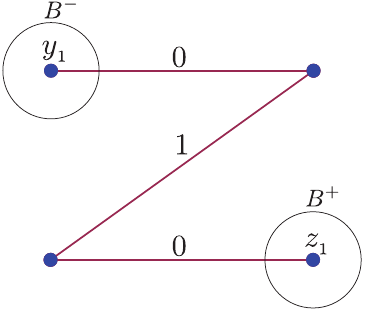}\\
\vspace{-0.5cm}\cr {$(a)$} & {$(b)$}
\end{tabular}}
\caption{$(a)$ The zig-zag product cylinder, $(b)$ The zig-zag quotient cylinder.}
\label{fig:zigzag_cylinder}
\end{figure}
\begin{itemize}
\item{{\bf The vertex transitive zig-zag product as a mixed construct}\\
Let $\matr{H}$ be a vertex transitive symmetric graph on the vertex set $V(\matr{H})=\{v_{_{1}},v_{_{2}},\ldots,v_{_{d}}\}$ given by the rotation map ${\rm Rot}_{_{\matr{H}}}$ and for each $i \in \nat[d]$ fix an automorphism of $\matr{H}$ as $\sigma_{_{i}}$ such that $\sigma_{_{i}}(v_{_{1}})=v_{_{i}}$ with $\sigma_{_{1}}$ equal to the identity automorphism. Let $\gamma_{_{i}} \in {\bf S}_{_{d}}$ be the permutation induced on the index set through $\sigma_{_{i}}$ and define
$$\Gamma \isdef \langle \gamma_{_{1}},\gamma_{_{2}},\ldots,\gamma_{_{d}} \rangle$$
as the subgroup of ${\bf S}_{_{d}}$ generated by the permutations $\gamma_{_{i}}$'s.
Also, define the cylinder
$\matr{C}_{_{\matr{H}}}^{^R}(\by,\bz)$ (see Figure~\ref{fig:zigzag_prod}) labeled by $\{0,1\}$ whose bases are two isomorphic copies of $\matr{H}$ marked by the vertices $\{y_{_{1}},y_{_{2}},\ldots,y_{_{d}}\}$ and $\{z_{_{1}},z_{_{2}},\ldots,z_{_{d}}\}$ and labeled by $0$, respectively, where the only other edge is the simple edge $y_{_{1}}z_{_{1}}$ labeled by $1$.\\
Moreover, let the graph $\matr{G}$ be defined through the rotation map
$${\rm Rot_{_{\matr{G}}}} : V(\matr{G}) \times D \longrightarrow V(\matr{G}) \times D,$$
and define the $(\Gamma,1)$-graph $\widehat{\matr{G}}$ as a graph with the same vertex and edge sets as $\matr{G}$
and with the following labeling,
$$\ell_{_{\widehat{\matr{G}}}}(uv)=(\gamma_{_{i}},\matr{C}_{_{\matr{H}}}^{^R},\gamma_{_{j}}) \quad \Leftrightarrow
\quad {\rm Rot_{_{\matr{G}}}}(u,i)=(v,j). $$
Now, the graph $\widehat{\matr{G}}\ \cyl{\Gamma} \matr{C}_{_{\matr{H}}}^{^R}$ is usually called the {\it replacement product} of the graphs $\matr{G}$ and $\matr{H}$. On the other hand, it is not hard to verify that the exponential graph $[\matr{Z},\widehat{\matr{G}} \cyl{\Gamma} \matr{C}_{_{\matr{H}}}^{^R}]_{_{\Gamma}}$ is actually the zig-zag product $\matr{G} \otimes_{_{z}} \matr{H}$, where $\matr{Z}$ is the zig-zag cylinder
depicted in Figure~\ref{fig:zigzag_prod}.
}
\item{{\bf The vertex transitive zig-zag product as a cylindrical product}\\
It is not hard to see that the above construction can also be described as a pure cylindrical product. For this, using the above setup, define
the cylinder $\matr{C}_{_{\matr{H}}}^{^Z}(\by,\bz)$ by adding edges to $\matr{C}_{_{\matr{H}}}^{^R}(\by,\bz)$ such that
$$y_{_{i}}z_{_{j}} \in E(\matr{C}_{_{\matr{H}}}^{^Z}) \quad \Leftrightarrow \quad
{\rm Rot_{_{\matr{H}}}}(y_{_{i}},l)=(y_{_{1}},l') \ \ {\rm and} \ \ {\rm Rot_{_{\matr{H}}}}(z_{_{1}},h)=(z_{_{j}},h').$$
Then one may verify that the cylindrical product $\widehat{\matr{G}}\ \cyl{\Gamma} \matr{C}_{_{\matr{H}}}^{^Z}$ is essentially the same as the zig-zag product $\matr{G} \otimes_{_{z}} \matr{H}$.
}
\end{itemize}
}\end{exm}
\begin{figure}[ht]
\begin{center}
\begin{tabular}{c c c }
\includegraphics{./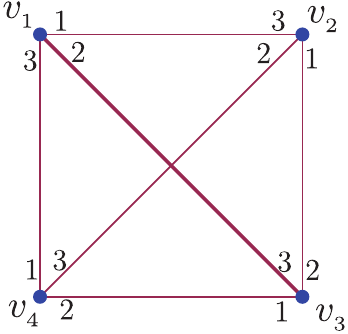} & \includegraphics{./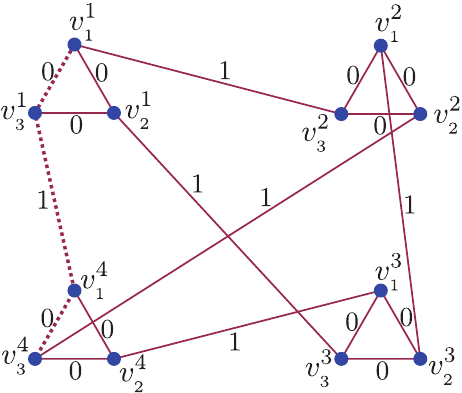} & \includegraphics{./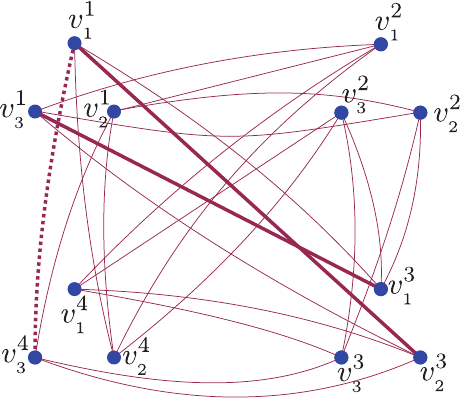}\\
\vspace{-0.5cm}\cr {$(a)$} & {$(b)$} & {$(c)$}
\end{tabular}
\end{center}
\caption{$(a)$ $\matr{G}_{_{1}}$ : zig-zag base graph in cylindrical product case, $(b)$ The zig-zag base graph in cylindrical quotient case, $(c)$ Resulting zig-zag graph. The bold lines are representing a copy of zig-zag product cylinder placed on the edge $(v_{_{1}},v_{_{3}})$ with appropriate label. The dashed line represents a homomorphism from zig-zag quotient cylinder to the zig-zag base graph.}
\label{fig:zigzag_prod}
\end{figure}

 Also, as a direct consequences of the definitions we have
\begin{prepro}\label{pro:cylinder}{
For any pair of reduced simple graphs $\matr{G}$ and $\matr{H}$, there exists a
vertex-surjective homomorphism $\sigma:\matr{G} \rightarrow \matr{H}$
if and only if there exists a coherent set of plain cylinders $\matr{C}=\{\matr{C}_i\}$, such that $\matr{G} \cong {\rm red}(\matr{H}_{_\ell} \cyl{} \matr{C})$, where $\matr{H}_{_\ell}$ is $\matr{H}$ properly labeled according to the label of cylinders.

 }\end{prepro}
\begin{proof}{
Consider the quotient graph and note that the graph induced on any two inverse images of $\sigma$ is a plain cylinder.
}\end{proof}

 This observation shows that the cylindrical construction generalizes the concept of a covering or a homomorphic pre-image in a fairly broad sense.
Hence, it is important to find out about the relationships between the construction and the homomorphism problem, which is the subject of the next section.

\section{Main duality theorem }\label{sec:main}
The tensor home dualities, although not observed widely, are classic in graph theory and at least goes back to the following result
of Pultr\footnote{Here one must be careful about the choice of categories (also see Section~\ref{sec:adjred} and \cite{adjoint}).} (see Examples~\ref{exm:exp} and \ref{exm:cylop}).

 \begin{alphthm}{{\rm \cite{ladjoint, pultr}} For any pultr template $\tau$, functors $\Lambda_{\tau}$ and $\Gamma_{\tau}$ are left and right adjoints in ${\bf Grph}_{_{\leq}}$, i.e. for any two graphs $\matr{G}, \matr{K}$ there exists a homomorphism $\Lambda_{\tau}(\matr{G}) \rightarrow \matr{K}$ if and only if there exists a homomorphism $\matr{G} \rightarrow \Gamma_{\tau}(\matr{K})$.
}\end{alphthm}

 In this section we state the fundamental theorem stating a duality between the exponential and cylindrical constructions, which shows that even in the presence of twists and cylinders with mixed bases one may also deduce a correspondence.
\begin{thm}{\label{thm:maino}\textbf{The fundamental duality}\\
Let $\matr{C}=\{\matr{C}^{^{j}}(\by^{^{j}},\bz^{^{j}},\J^{^{j}}) \ \ | \ \ j \in \nat[m]\}$ be a $\Gamma$-coherent set of $(t,k)$-cylinders, and also let $\matr{G}$ be a $(\Gamma,m)$-graph.
Then for any labeled graph $\matr{H}$,
\begin{itemize}
\item[{\rm a)}]{ $\lhom(\matr{G} \cyl{\Gamma} \matr{C},\matr{H}) \not = \emptyset \quad \Leftrightarrow \quad \chom(\matr{G},[\matr{C},\matr{H}]_{_{\Gamma}}) \not = \emptyset.$}
\item[{\rm b)}]{There exist a retraction
$$r_{_{\matr{G},\matr{H}}}: \lhom(\matr{G} \cyl{\Gamma} \matr{C},\matr{H}) \rightarrow \chom(\matr{G},[\matr{C},\matr{H}]_{_{\Gamma}})$$
and a section
$$s_{_{\matr{G},\matr{H}}}: \chom(\matr{G},[\matr{C},\matr{H}]_{_{\Gamma}}) \rightarrow \lhom(\matr{G} \cyl{\Gamma} \matr{C},\matr{H}),$$
such that
$r_{_{\matr{G},\matr{H}}} \circ s_{_{\matr{G},\matr{H}}} = {\bf 1}_{_{_{ \lhom(\matr{G} \cyl{\Gamma} \matr{C},\matr{H})}}}$, where ${\bf 1}_{_{_{ \lhom(\matr{G} \cyl{\Gamma} \matr{C},\matr{H})}}}$ is the identity mapping.}
\end{itemize}
}\end{thm}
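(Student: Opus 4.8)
The plan is to exhibit the two maps $r \isdef r_{_{\matr{G},\matr{H}}}$ and $s \isdef s_{_{\matr{G},\matr{H}}}$ explicitly and to check that $r\circ s$ is the identity; part~$(a)$ then comes for free, since $r$ gives the implication $\lhom(\matr{G}\cyl{\Gamma}\matr{C},\matr{H})\neq\emptyset\Rightarrow\chom(\matr{G},[\matr{C},\matr{H}]_{_{\Gamma}})\neq\emptyset$ and $s$ gives its converse. First I would build $r$. Given $\phi=(\phi_{_{V}},\phi_{_{E}})\in\lhom(\matr{G}\cyl{\Gamma}\matr{C},\matr{H})$, recall that for every $v\in V(\matr{G})$ the graph $\matr{G}\cyl{\Gamma}\matr{C}$ carries the blown-up list $\bv\isdef(v_{_{1}},\ldots,v_{_{k}})$ of base vertices sitting at the position of $v$; I would set $\sigma_{_{V}}(v)\isdef\langle\phi_{_{V}}\bv\rangle_{_{\Gamma}}$ and record the element $\rho_{_{v}}\in\Gamma$ with $(\phi_{_{V}}\bv)\rho_{_{v}}=\sigma_{_{V}}(v)$. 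For an edge $e=uv$ with $\ell_{_{\matr{G}}}(e)=(\gamma^{u},j,\gamma^{v})$, restricting $\phi$ to the embedded (twisted) copy of $\matr{C}^{^{j}}$ replacing $e$ gives, after undoing the twist $(\gamma^{u},\gamma^{v})$, a labeled homomorphism $\psi_{_{e}}\in\lhom(\matr{C}^{^{j}}(\by^{^{j}},\bz^{^{j}},\J^{^{j}}),\matr{H})$ carrying the two bases, up to that twist, onto $\phi_{_{V}}\bu$ and $\phi_{_{V}}\bv$. By clause~$(ii)$ of Definition~\ref{defin:exponentialgraph} this $\psi_{_{e}}$ witnesses an edge of $[\matr{C},\matr{H}]_{_{\Gamma}}$ joining $\sigma_{_{V}}(u)$ to $\sigma_{_{V}}(v)$, and I would take that edge to be $\sigma_{_{E}}(e)$, so that $r(\phi)\isdef(\sigma_{_{V}},\sigma_{_{E}})$.

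Next I would check that $r(\phi)$ is a $(\Gamma,m)$-homomorphism. Preservation of $\ell^{*}$ is immediate, since the copy attached to $e$ is a copy of $\matr{C}^{^{j}}$. For the rest, tracking the twists shows that $\ell_{_{[\matr{C},\matr{H}]_{_{\Gamma}}}}(\sigma_{_{E}}(e))$ is obtained from $\ell_{_{\matr{G}}}(e)$ by the vertex-indexed shift $w\mapsto\rho_{_{w}}$, i.e. by exactly the passage from $\matr{G}$ to $\matr{G}_{_{\alpha}}$ described just before Corollary~\ref{cor:alphashift}. Since every label-difference function $\Delta^{ab}$ is unchanged under such a shift (the factor $\rho_{_{w}}$ at the common endpoint cancels with its inverse), we get $\Delta^{ab}_{_{\matr{G}}}(e_{_{1}},e_{_{2}})=\Delta^{ab}_{_{[\matr{C},\matr{H}]_{_{\Gamma}}}}(\sigma_{_{E}}(e_{_{1}}),\sigma_{_{E}}(e_{_{2}}))$ for all incident $e_{_{1}},e_{_{2}}$, so $(\sigma_{_{V}},\sigma_{_{E}})$ is a $(\Gamma,m)$-homomorphism by Proposition~\ref{CHOMLEMMA} (the constant appearing there being $\rho_{_{w}}^{-1}$). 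That $\sigma$ is, up to the canonical isomorphism of Definition~\ref{defin:exponentialgraph}, independent of the choices of representatives and of the $\rho_{_{v}}$ is checked exactly as in that definition.

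Then I would build $s$. Given $\sigma=(\sigma_{_{V}},\sigma_{_{E}})\in\chom(\matr{G},[\matr{C},\matr{H}]_{_{\Gamma}})$, for each $e\in E(\matr{G})$ I would pick a witness $\psi_{_{e}}\in\lhom(\matr{C}^{^{j}},\matr{H})$ realising the edge $\sigma_{_{E}}(e)$ together with its label, and define $\phi\isdef s(\sigma)$ by placing $\psi_{_{e}}$ on the inner vertices and edges of the cylinder-copy attached to $e$ and $\sigma_{_{V}}(v)$, suitably re-indexed, on the base vertices sitting at $v$. Two things need checking. One is that $\phi$ respects the labels (each edge of $\matr{G}\cyl{\Gamma}\matr{C}$ inherits the label of the cylinder it lies in, and $\psi_{_{e}}$ preserves it) and the base identifications $\J^{^{j}}$ (a cylinder homomorphism sends $y^{^{j}}_{i}$ and $z^{^{j}}_{\J^{^{j}}(i)}$ to the same vertex). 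The other, and this is the heart of the argument, is that $\phi$ is well defined: a base vertex sitting at $v$ lies on the cylinder-copies of all edges incident to $v$, and one must see that the separately chosen $\psi_{_{e}}$ assign it one and the same value in $\matr{H}$. This is precisely where Proposition~\ref{CHOMLEMMA} is used: its vertex-constant $\alpha_{_{v}}$ encodes the fact that the twist data $\sigma$ carries along the edges at $v$ is coherent at $v$, so that after re-indexing by the relevant $\gamma^{\pm}$'s all the $\psi_{_{e}}$ agree on the list $\bv$. Granting this, $r\circ s$ equals the identity almost by construction: under $\phi=s(\sigma)$ the list $\bv$ maps to $\sigma_{_{V}}(v)$ up to a $\Gamma$-twist, so its representative is $\sigma_{_{V}}(v)$ again, while the cylinder-copy at $e$ restricts back to $\psi_{_{e}}$, which witnesses exactly the edge $\sigma_{_{E}}(e)$ with its label; hence $r(s(\sigma))=\sigma$.

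The routine parts are the $\Delta$-bookkeeping for $r$ and the homomorphism-and-label checks for $\phi$. The hard part will be the well-definedness of $\phi=s(\sigma)$: forcing the independently chosen witnesses $\psi_{_{e}}$ to agree along the shared bases while keeping every $\Gamma$-convention straight --- the inverses, the right action on indices, and the fixed system of orbit representatives used to build $[\matr{C},\matr{H}]_{_{\Gamma}}$. Proposition~\ref{CHOMLEMMA} is the lever that makes it work, and choosing the re-indexing of $\sigma_{_{V}}(v)$ on each incident cylinder so that all the choices cohere is where the real work sits; the symmetric statement then follows from the symmetric-versus-directed dictionary already set up in Definition~\ref{defin:exponentialgraph}.
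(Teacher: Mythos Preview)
Your proposal is correct and follows essentially the same approach as the paper: define $r$ by sending each blown-up base list $\bv$ to its $\Gamma$-orbit representative and each edge to the exponential-graph edge witnessed by the restriction of $\phi$ to the corresponding cylinder copy, define $s$ by choosing a witnessing cylinder homomorphism for each edge of $\matr{G}$ and gluing them along the shared bases, and use Proposition~\ref{CHOMLEMMA} both to verify that $r(\phi)$ is a genuine $(\Gamma,m)$-homomorphism and (crucially) to show that the local witnesses $\psi_{_{e}}$ agree on overlapping bases so that $s(\sigma)$ is well defined. Your identification of the well-definedness of $s$ as the heart of the argument, with Proposition~\ref{CHOMLEMMA} as the lever, matches the paper exactly.
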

\begin{proof}{\noindent ($\Rightarrow$) If $\lhom(\matr{G} \cyl{\Gamma} \matr{C},\matr{H}) \not = \emptyset$, then there exists a homomorphism $$\sigma=(\sigma_{_{V}},\sigma_{_{E}}) \in \lhom(\matr{G} \cyl{\Gamma} \matr{C},\matr{H}).$$
Assume that $V(\matr{G}) = \{v_{_{1}},v_{_{2}},\ldots,v_{_{n}}\}$ and
$$\forall\ i \in \nat[n] \quad \sigma_{_{V}}(\bv^i) \isdef \bu^i,$$
where $\bv ^i=(\bv^i_1, \bv^{i}_{2}, \dots , \bv^i _k)$ is the set of vertices that blow up at $v_i$, and $$\bu^i = (\sigma_{_V}(\bv^i_1), \sigma_{_V}(\bv^i_2), \dots , \sigma_{_V}(\bv^i_k)).$$ Now, define $r_{_{\matr{G},\matr{H}}}((\sigma_{_{V}},\sigma_{_{E}}))=(\sigma'_{_{V}},\sigma'_{_{E}})$, where $\sigma'_{_{V}}: V(\matr{G}) \longrightarrow V([\matr{C},\matr{H}]_{_{\Gamma}})$ and
$$\forall\ i \in \nat[n] \quad \sigma'_{_{V}}(v_{_i}) \isdef \langle \bu^i \rangle_{_{\Gamma}}.$$
Note that by the definition of the exponential graph construction,
for each $i \in \nat[n]$, there exists a unique $\alpha_{_{i}} \in \Gamma$ such that $\langle \bu^i \rangle_{_{\Gamma }}=\bu^i \alpha_{_{i}}$, hence we have
$$\forall\ i \in \nat[n] \quad \sigma'_{_{V}}(v_{_i}) \isdef \bu^i \alpha_{_{i}} .$$
Again, for each $e=v_{_i}v_{_j} \in E(\matr{G})$ with $\iota(e)=v_{_i}$, $\tau(e)=v_{_j}$ and $\ell_{_{\matr{G}}}(e)=(\gamma^-,t,\gamma^+)$, define,
$$\sigma'_{_{E}}(e) \isdef e'=\langle \bu^i \rangle\langle \bu^j \rangle,$$
such that
$$\iota(e')=\langle \bu^i \rangle_{_{\Gamma }}=\bu^i \alpha_{_{i}}, \ \ \tau(e')=\langle \bu^j \rangle_{_{\Gamma }}=\bu^j \alpha_{_{j}}, \ \
\ell_{_{[\matr{C},\matr{H}]_{_{\Gamma}}}}(e') \ = \ (\alpha_{_{i}}\gamma^-,t,\alpha_{_{j}}\gamma^+).$$
Now, we show that the mapping $\sigma'=(\sigma'_{_{V}},\sigma'_{_{E}})$ is well defined and is a $(\Gamma,m)$-homomorphism in $\chom(\matr{G},[\matr{C},\matr{H}]_{_{\Gamma}})$.
First, note that since $\ell_{_{\matr{G}}}(e)=(\gamma^-,t,\gamma^+)$, by cylindrical construction, the restriction of $\sigma$
to the cylinder on this edge $e=v_{_i}v_{_j}$, gives a homomorphism
$$\sigma^{e}=(\sigma_{_{V}}^{e},\sigma_{_{E}}^{e}) \in \lhom (\matr{C}^{^t}(\by^{^{t}}\gamma^-,\bz^{^{t}}\gamma^+,(\gamma^-, \gamma^+)\J^{^{t}}),\matr{H}),$$
such that
$$\sigma_{_{V}}^{e}(\by^{^{t}}\gamma^-)=\bu^i \quad {\rm and} \quad \sigma_{_{V}}^{e}(\bz^{^{t}}\gamma^+)=\bu^j.$$
Hence, by applying the twists $(\alpha_{_{i}} , \alpha_{_{j}})$ we obtain a new homomorphism
$$\tilde{\sigma}^{e}=(\tilde{\sigma}_{_{V}}^{e},\tilde{\sigma}_{_{E}}^{e}) \in \lhom (\matr{C}^{^t}(\by^{^{t}}\alpha_{_{i}}\gamma^-,\bz^{^{t}}\alpha_{_{j}}\gamma^+, (\gamma^-, \gamma^+)\J^{^{t}}),\matr{H}),$$
such that
$$\tilde{\sigma}_{_{V}}^{e}(\by^{^{t}}\alpha_{_{i}}\gamma^-)=\bu^i \alpha_{_{i}}=\langle \bu^i \rangle_{_{\Gamma }},
\quad \tilde{\sigma}_{_{V}}^{e}(\bz^{^{t}}\alpha_{_{j}}\gamma^+)=\bu^j \alpha_{_{j}}=\langle \bu^j \rangle_{_{\Gamma }},$$
and by the definition of the exponential graph, this shows that there is an edge $\tilde{e} \isdef \langle \bu^i \rangle_{_{\Gamma }}\langle \bu^j \rangle_{_{\Gamma }} \in E([\matr{C},\matr{H}]_{_{\Gamma}})$ with the label
$\ell_{_{[\matr{C},\matr{H}]_{_{\Gamma}}}}(\tilde{e}) \ = \ (\alpha_{_{i}}\gamma^-,t,\alpha_{_{j}}\gamma^+).$ This shows that $\sigma'$ is a graph homomorphism. \\
Also, by Proposition~\ref{CHOMLEMMA}, it is clear that
$$\sigma'=(\sigma'_{_{V}},\sigma'_{_{E}}) \in \chom(\matr{G},[\matr{C},\matr{H}]_{_{\Gamma}}).$$
\ \\
($\Leftarrow$)
Fix a homomorphism $\sigma'=(\sigma'_{_{V}},\sigma'_{_{E}}) \in \chom(\matr{G},[\matr{C},\matr{H}]_{_{\Gamma}})$ and assume that
$$\forall\ i \in \nat[n] \quad \sigma'_{_{V}}(v_{_i})=\langle \bu^i \rangle_{_{\Gamma }} \in V([\matr{C},\matr{H}]_{_{\Gamma}}).$$
Also, by Proposition~\ref{CHOMLEMMA}, we know that for each $i \in \nat[n]$ there exists $\alpha_{_{i}} \in \Gamma$ such that for any edge $e$ intersecting $v_{_{i}}$ we have,
$$\ell^{v_{_{i}}}_{_{\matr{G}}}(e)=\alpha_{_{i}}[\ell^{\langle \bu^i \rangle_{_{\Gamma }}}_{_{ [\matr{C},\matr{H}]_{_{\Gamma}} }}(\sigma'_{_{E}}(e))].$$
Again, fix an edge $e=v_{_i}v_{_j} \in E(\matr{G})$ with the label $\ell_{_{\matr{G}}}(e)=(\gamma^-,t,\gamma^+)$ and note that since $\sigma'$ is a homomorphism, then
$$\sigma'_{_{E}}(e) \in E([\matr{C},\matr{H}]_{_{\Gamma}}) \quad {\rm and}
\quad \ell_{_{[\matr{C},\matr{H}]_{_{\Gamma}}}}(e)=(\alpha_{_{i}}\gamma^-, t, \alpha_{_{j}}\gamma^+).$$
Therefore, by the definition of the exponential graph, we have a homomorphism
$$\tilde{\sigma}^{e}=(\tilde{\sigma}_{_{V}}^{e},\tilde{\sigma}_{_{E}}^{e}) \in \lhom (\matr{C}^{^t}(\by^{^{t}}\alpha_{_{i}}\gamma^-,\bz^{^{t}}\alpha_{_{j}}\gamma^+, (\gamma^-, \gamma^+)\J^{^{t}}),\matr{H}),$$
and consequently, we obtain a homomorphism
$$\sigma^{e}=(\sigma_{_{V}}^{e},\sigma_{_{E}}^{e}) \in \lhom (\matr{C}^{^t}(\by^{^{t}}\gamma^-,\bz^{^{t}}\gamma^+, (\gamma^-, \gamma^+)\J^{^{t}}),\matr{H}),$$
such that
$$\sigma_{_{V}}^{e}(\by^{^{t}}\gamma^-)=\bu^i \alpha_{_{i}} \quad {\rm and} \quad \sigma_{_{V}}^{e}(\bz^{^{t}}\gamma^+)=\bu^j \alpha_{_{j}}.$$
Now, by Proposition~\ref{CHOMLEMMA}, these homomorphisms are compatible at each vertex and we may define a global homomorphism
$$s_{_{\matr{G},\matr{H}}}((\sigma'_{_{V}},\sigma'_{_{E}})) = (\sigma_{_{V}},\sigma_{_{E}})\isdef \bigcup_{e \in E(\matr{G})} \sigma^{e} \in \lhom(\matr{G} \cyl{\Gamma} \matr{C},\matr{H}). $$
It remains to prove that $$\forall \sigma' \in \chom(\matr{G},[\matr{C},\matr{H}]_{_{\Gamma}}) \quad r_{_{\matr{G},\matr{H}}}(s_{_{\matr{G},\matr{H}}}(\sigma')) = \sigma',$$
but by definitions,
$$ r_{_{\matr{G},\matr{H}}}(s_{_{\matr{G},\matr{H}}}(\sigma'))(e)=r_{_{\matr{G},\matr{H}}}(\sigma^{e})(e)= \sigma'(e),$$
since $\langle \bu^i \alpha_i \rangle_{_{\Gamma }} = \langle \bu^i \rangle_{_{\Gamma}}$.
Also, for any $e \in E(\matr{G})$ whose label is $(\gamma^-,t,\gamma^+)$, by definitions
we know that $$r_{_{\matr{G},\matr{H}}}(s_{_{\matr{G},\matr{H}}}(\sigma'_{_{E}}))(e) = e',$$
where
$$\iota(e')=\langle \bu^i \rangle_{_{\Gamma }}=\bu^i \alpha_{_{i}}, \ \ \tau(e')=\langle \bu^j \rangle_{_{\Gamma }}=\bu^j \alpha_{_{j}}, \ \
\ell_{_{[\matr{C},\matr{H}]_{_{\Gamma}}}}(e') \ = \ (\alpha_{_{i}}\gamma^- ,t,\alpha_{_{j}}\gamma^+),$$
and by Proposition~\ref{CHOMLEMMA} it is clear that
$$r_{_{\matr{G},\matr{H}}}(s_{_{\matr{G},\matr{H}}}(\sigma'_{_{E}}))(e) = \sigma'_{_{E}}(e)$$
and consequently,
$$r_{_{\matr{G},\matr{H}}}(s_{_{\matr{G},\matr{H}}}(\sigma'_{_{E}})) = \sigma'_{_{E}}.$$
}\end{proof}
\begin{figure}[ht]
\includegraphics[width=11cm]{./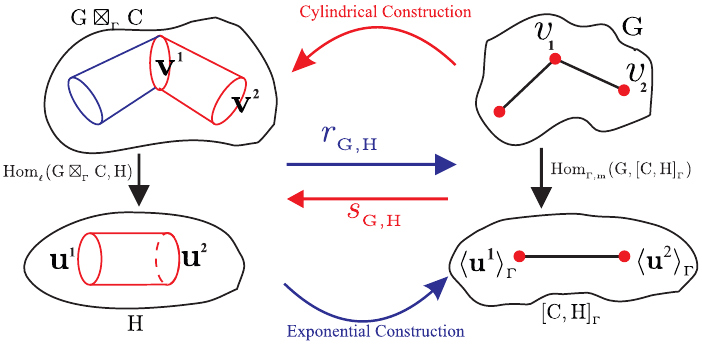}\centering{
\caption{Schematic diagram for the proof of Theorem \ref{thm:maino}. }}
\label{fig:main} \end{figure}
Similarly, one may prove the following version of this theorem for symmetric graphs and their homomorphisms.

 \begin{thm}\label{SMAINTHM}
Let $\matr{C}=\{\matr{C}^{^{j}}(\by^{^{j}},\bz^{^{j}},\J^{^{j}}) \ \ | \ \ j \in \nat[m]\}$ be a set
of $\Gamma$-coherent symmetric $(t,k)$-cylinders, and
also, let $\matr{G}$ be a symmetric $(\Gamma,m)$-graph.
Then for any labeled symmetric graph $\matr{H}$,
\begin{itemize}
\item[{\rm a)}]{ $\slhom(\matr{G} \scyl{\Gamma} \matr{C},\matr{H}) \not = \emptyset \quad \Leftrightarrow \quad \schom(\matr{G},[\matr{C},\matr{H}]^{^{s}}_{_{\Gamma}}) \not = \emptyset.$}
\item[{\rm b)}]{There exist a retraction
$$r_{_{\matr{G},\matr{H}}}: \slhom(\matr{G} \scyl{\Gamma} \matr{C},\matr{H}) \rightarrow \schom(\matr{G},[\matr{C},\matr{H}]^{^{s}}_{_{\Gamma}})$$
and a section
$$s_{_{\matr{G},\matr{H}}}: \schom(\matr{G},[\matr{C},\matr{H}]^{^{s}}_{_{\Gamma}}) \rightarrow \slhom(\matr{G} \scyl{\Gamma} \matr{C},\matr{H}) $$
such that
$r_{_{\matr{G},\matr{H}}} \circ s_{_{\matr{G},\matr{H}}} = {\bf 1}$, where ${\bf 1}$ is the identity mapping.}
\end{itemize}
\end{thm}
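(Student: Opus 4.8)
The plan is to re-run the proof of Theorem~\ref{thm:maino} verbatim in the symmetric categories, checking at each step that the objects and maps involved respect the involution that reverses an edge. Conceptually this works because every construction in the statement is the ``symmetric shadow'' of the corresponding directed one: by the analogy adopted throughout Section~\ref{sec:cylconst} (a simple edge $=$ a pair of opposite arcs), $\matr{G}\scyl{\Gamma}\matr{C}$ is the symmetric graph underlying $\vec{\matr{G}}\cyl{\Gamma}\vec{\matr{C}}$, the symmetric exponential graph $[\matr{C},\matr{H}]^{^{s}}_{_{\Gamma}}$ is symmetric precisely because of the equivalence ${\bf u}{\bf v}\in E([\matr{C},\matr{H}]_{_{\Gamma}}) \Leftrightarrow {\bf v}{\bf u}\in E([\matr{C},\matr{H}]_{_{\Gamma}})$ recorded in Definition~\ref{defin:exponentialgraph}, and $\slhom$, $\schom$ are the hom-sets of ${\bf SymGrph}$ and its $(\Gamma,m)$-variant. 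So it is enough to check that the retraction $r_{_{\matr{G},\matr{H}}}$ and section $s_{_{\matr{G},\matr{H}}}$ built in the proof of Theorem~\ref{thm:maino} are equivariant for edge reversal, hence restrict to the symmetric setting.

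For the retraction, given $\sigma\in\slhom(\matr{G}\scyl{\Gamma}\matr{C},\matr{H})$ I would set $\sigma'_{_V}(v_{_i})\isdef\langle\bu^i\rangle_{_{\Gamma}}$ with $\bu^i\isdef\sigma_{_V}(\bv^i)$, and define $\sigma'_{_E}$ on the simple edges of $\matr{G}$ exactly as before. Since both $\matr{G}$ and $[\matr{C},\matr{H}]^{^{s}}_{_{\Gamma}}$ are symmetric, simple edges go to simple edges and $\sigma'$ is automatically a ${\bf SymGrph}$-morphism, while the $(\Gamma,m)$-homomorphism conditions follow from Proposition~\ref{CHOMLEMMA} word for word, giving $\sigma'\in\schom(\matr{G},[\matr{C},\matr{H}]^{^{s}}_{_{\Gamma}})$. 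For the section, given $\sigma'\in\schom(\matr{G},[\matr{C},\matr{H}]^{^{s}}_{_{\Gamma}})$ and a simple edge $e=v_{_i}v_{_j}$ of $\matr{G}$, I would produce the cylinder homomorphism $\sigma^{e}$ as in Theorem~\ref{thm:maino}; the single new ingredient is that for the reverse arc $\bar e=v_{_j}v_{_i}$ one must \emph{not} choose $\sigma^{\bar e}$ independently but set $\sigma^{\bar e}\isdef\sigma^{e}\circ\varphi^{-1}$, where $\varphi$ is the base-swapping automorphism of the symmetric cylinder $\matr{C}^{^t}$ from Definition~\ref{defin:cylinder}. Because the cylinder is symmetric (so $\varphi(y_{_i})=z_{_i}$, $\varphi(z_{_i})=y_{_i}$ and $\J\subseteq\{(j,j):j\in\nat[k]\}$) and the labeling is symmetric (so the two twists carried by $e$ coincide), $\sigma^{\bar e}$ lands in the hom-set that the symmetric exponential graph assigns to $\bar e$ and agrees with $\sigma^{e}$ on the base shared at $v_{_j}$; gluing over all edges then yields a bona fide ${\bf SymGrph}$-morphism $s_{_{\matr{G},\matr{H}}}(\sigma')\in\slhom(\matr{G}\scyl{\Gamma}\matr{C},\matr{H})$, not merely a homomorphism of underlying digraphs.

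The retraction identity $r_{_{\matr{G},\matr{H}}}\circ s_{_{\matr{G},\matr{H}}}={\bf 1}$ then follows by the same computation as in Theorem~\ref{thm:maino}, reducing once more to $\langle\bu^i\alpha_{_i}\rangle_{_{\Gamma}}=\langle\bu^i\rangle_{_{\Gamma}}$ together with Proposition~\ref{CHOMLEMMA}, and part~(a) is immediate from part~(b). The step I expect to be the real content is the one highlighted above: making the choices $\sigma^{e}$ and $\sigma^{\bar e}$ coherent through the cylinder's base-swapping automorphism so that the glued map preserves the pair-of-opposite-arcs structure; equivalently, one has to verify that all derived objects --- $[\matr{C},\matr{H}]^{^{s}}_{_{\Gamma}}$ regarded as a symmetric $(\Gamma,m)$-graph and $\matr{G}\scyl{\Gamma}\matr{C}$ as a symmetric labeled graph --- genuinely live in the symmetric categories with symmetric labelings, so that $\slhom$ and $\schom$ are the correct hom-sets. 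Granting these symmetry compatibilities, every line of the proof of Theorem~\ref{thm:maino} transfers unchanged.
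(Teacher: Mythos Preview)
Your proposal is correct and follows exactly the approach the paper takes: the paper does not give a separate proof of Theorem~\ref{SMAINTHM} at all, it simply says ``Similarly, one may prove the following version of this theorem for symmetric graphs and their homomorphisms'' and leaves it at that. Your write-up is in fact more detailed than the paper's, in that you explicitly isolate the one genuinely new point---forcing $\sigma^{\bar e}=\sigma^{e}\circ\varphi^{-1}$ via the base-swapping automorphism of the symmetric cylinder so that the glued map lives in ${\bf SymGrph}$---which the paper absorbs into the word ``similarly.''
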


Here we go through some basic observations.

 \begin{pro}{\label{pro:basics1}
For any graph $\matr{G}$ and positive integers $r,s,t$ and $k$, we have
\begin{itemize}
\item[{\rm 1)}] {
$[\matr{P}_{_{2k-1}} ,\matr{C}_{_{2k+1}} ]\simeq \matr{K}_{_{2k+1}}.$
}
\item[{\rm 2)}] $(\matr{G}^r)^s\homeq \matr{G}^{rs}.$
\item [{\rm 3)}]For any two $(t,k)$-cylinders $\matr{A}$ and $\matr{B}$ and any graph $\matr{H}$, we have
$$[\matr{A}+\matr{B},\matr{H}] \rightarrow [\matr{A},\matr{H}] \times [\matr{B},\matr{H}],$$
where $\times$ is the categorical product of two graphs and amalgamation is according to the standard labeling of cylinders.
\end{itemize}
}\end{pro}

 \begin{proof}{
\
\begin{enumerate}
\item Note that for any two fixed vertices $x$ and $y$ in $\matr{C}_{_{2k+1}}$, there exists a path of odd length $d \leq 2k-1$ between $x$ and $y$ in $\matr{C}_{_{2k+1}}$.
\item Note that $(\matr{G}^r)^s \simeq [\matr{P}_s,[\matr{P}_r,\matr{G}]] $ and $(\matr{G}\boxtimes \matr{P}_s) \boxtimes \matr{P}_r \simeq (\matr{G}\boxtimes \matr{P}_r) \boxtimes \matr{P}_s \simeq \matr{G}\boxtimes \matr{P}_{rs}$. Hence, by
Theorem~\ref{thm:maino} for any graph $\matr{H}$ we have,
$$
\begin{array}{lll}
\hom(\matr{H},(\matr{G}^r)^s) \neq \emptyset &\Leftrightarrow
\hom(\matr{H},[\matr{P}_s,[\matr{P}_r,\matr{G}]]) \neq \emptyset & \Leftrightarrow
\hom(\matr{H}\cyl{}\matr{P}_s,[\matr{P}_r,\matr{G}]) \neq \emptyset
\\
& \Leftrightarrow
\hom((\matr{H}\cyl{}\matr{P}_s)\cyl{}\matr{P}_r,\matr{G}) \neq \emptyset
&\Leftrightarrow
\hom(\matr{H}\cyl{}\matr{P}_{rs},\matr{G}) \neq \emptyset
\\
&\Leftrightarrow
\hom(\matr{H},[\matr{P}_{rs},\matr{G}]) \neq \emptyset &\Leftrightarrow \hom(\matr{H},\matr{G}^{rs}) \neq \emptyset.
\end{array}
$$
Now use these equivalences once for $\matr{H}=(\matr{G}^r)^s$ to get $\hom((\matr{G}^r)^s,\matr{G}^{rs}) \neq \emptyset$ and once for $\matr{H}=\matr{G}^{rs}$, to get $\hom(\matr{G}^{rs},(\matr{G}^r)^s) \neq \emptyset$.
\item We use Theorem~\ref{thm:maino} and Proposition \ref{pro:plusu} as follows,
$$
\begin{array}{lll}
\matr{G} \rightarrow [\matr{A}+\matr{B},\matr{H}] & \xLeftrightarrow{\mathrm{Theorem~\ref{thm:maino}}} &\matr{G}\boxtimes(\matr{A}+\matr{B}) \rightarrow \matr{H}\\ & \xLeftrightarrow{\mathrm{Proposition \ \ref{pro:plusu}}}& \matr{G}\boxtimes \matr{A} +\matr{G}\boxtimes \matr{B} \rightarrow \matr{H}\\
&\xRightarrow{\mathrm{Pushout\ properties}} &\matr{G}\boxtimes \matr{B} \rightarrow \matr{H}\ \mathrm{and} \ \matr{G}\boxtimes \matr{A} \rightarrow \matr{H} \\
& \xLeftrightarrow{\mathrm{Theorem~\ref{thm:maino}}}& \matr{G} \rightarrow [\matr{A},\matr{H}]\ \mathrm{and} \ \matr{G} \rightarrow [\matr{B},\matr{H}] \\
& \xLeftrightarrow{\mathrm{Categorical\ product \ properties}}& \matr{G}\rightarrow [\matr{A},\matr{H}] \times [\matr{B},\matr{H}].
\end{array}
$$
Now, set $\matr{G}\isdef [\matr{A}+\matr{B},\matr{H}]$,
and consequently, $[\matr{A}+\matr{B},\matr{H}]\rightarrow [\matr{A},\matr{H}] \times [\matr{B},\matr{H}].$
\end{enumerate}
}\end{proof}

 Definitely one may consider a category of cylinders and maps between them and consider this category and its basic properties.
We will not delve into the details of this in this article.
\section{A categorical perspective}\label{sec:cat}
Let $\matr{C}=\{\matr{C}^{^{j}}(\by^{^{j}},\by^{^{j}},\J^{^{j}}) \ \ | \ \ j \in \nat[m]\}$ be a $\Gamma$-coherent set of $(t,k)$-cylinders.
We show that the cylindrical construction $- \cyl{\Gamma} \matr{C} : {\bf LGrph}(\Gamma,m) \longrightarrow {\bf LGrph}$ and the exponential graph
construction $[\matr{C}, -]_{_{\Gamma}} : {\bf LGrph} \longrightarrow {\bf LGrph}(\Gamma,m)$ introduce well-defined functors.
To see this,
For two objects $ \matr{G}$ and $ \matr{G}'$ in $ Obj({\bf LGrph}(\Gamma,m))$, where
$$V( \matr{G}) =\{v_{_1}, v_{_2}, . . . , v_{_n}\}, \quad V ( \matr{G}') =\{u_{_1}, u_{_2}, . . . , u_{_n}\}$$
consider the following definition of the cylindrical construction functor,
$$\forall\ \matr{G} \in Obj({\bf LGrph}(\Gamma,m)) \quad (- \cyl{\Gamma} \matr{C})(\matr{G}) \isdef \matr{G} \cyl{\Gamma} \matr{C},$$
$$\forall\ f \in \chom(\matr{G},\matr{G}') \quad (- \cyl{\Gamma} \matr{C})(f) \isdef f \cyl{\Gamma} \matr{C} \in \lhom(\matr{G} \cyl{\Gamma} \matr{C},\matr{G}' \cyl{\Gamma} \matr{C}),$$
defined as follows, where $ \{\alpha_{_{1}},\alpha_{_{2}},\ldots,\alpha_{_{n}}\} \subseteq \Gamma,$
is provided by Proposition~\ref{CHOMLEMMA} with respect to $f$,
$$(f \cyl{\Gamma} \matr{C})_{_{V}}(x) \isdef \left \{ \begin{array}{lll}

 \bu^{^j}_{_t}\alpha_{_{i}} & x=\bv^{^i}_{_t},\ f(\bv^{^i})=\bu^{^j} \quad &(\bv^{^j}\textrm{:blowed up vertices for }v_{i}\\

 & &\textrm{ \ } \bu^{^j} \textrm{: blowed up vertices for }u_{j}), \\
w^{e'}_{i} & x=w^e_{i},\ f(e)=e' \quad & \textrm{(Internal vertices of cylinders)},
\end{array}\right. $$
where as mentioned before $\bw^{{e}} \isdef (w_{_{1}}^{{e}},w_{_{2}}^{{e}},\ldots,w_{_{r}}^{{e}})$ consisting of internal vertices that appear in $\matr{G}\cyl{\Gamma} \matr{C}$ when the edge $e$ is replaced by the corresponding cylinder. Also (see Figure~\ref{functor}) we define
$$(f \cyl{\Gamma} \matr{C})_{_{E}}(e) \isdef \left \{ \begin{array}{ll}
u^{^{j}}_{_{\alpha_{_{i}}(l)}}u^{^{j}}_{_{\alpha_{_{i}}(l')}} & e=v^{^{i}}_{_{l}}v^{^{i}}_{_{l'}}, \ f(v_{_{i}})=u_{_{j}},\\
& \textrm{(Entire edge is in base of a cylinder),}\\
u^{^{j}}_{_{\alpha_{_{i}}(l)}}u^{^{j'}}_{_{\alpha_{_{i'}}(l')}} & e=v^{^{i}}_{_{l}}v^{^{i'}}_{_{l'}}, \ f(v_{_{i}}v_{_{i'}})=u_{_{j}}u_{_{j'}},\\
& \textrm{(Edges which adjacent to both bases of a cylinder),}\\
u^{^{j}}_{_{\alpha_{_{i}}(l)}}w & e=v^{^{i}}_{_{l}}w, \ f(v_{_{i}})=u_{_{j}},\\
& \textrm{(An internal vertex and a vertex in the initial base } \matr{B}^-),\\
wu^{^{j}}_{_{\alpha_{_{i}}(l)}} & e=wv^{^{i}}_{_{l}},\ f(v_{_{i}})=u_{_{j}},\\
&\textrm{(An internal vertex and a vertex in the terminal base } \matr{B}^+),\\
ww' \in E(\matr{C}^{^{u_{_{j}}u_{_{j'}}}}) & e=ww' \in E(\matr{C}^{^{v_{_{i}}v_{_{i'}}}}),\ f(v_{_{i}}v_{_{i'}})=u_{_{j}}u_{_{j'}},\\ & \textrm{(Edges which is not adjacent to any base vertices).}
\end{array}\right. $$
Strictly speaking, this describes $f \cyl{\Gamma} -$ as a mapping that acts as $f$ but identically on cylinders as generalized edges.
\begin{figure}[ht]
\centering{\includegraphics[width=9cm]{./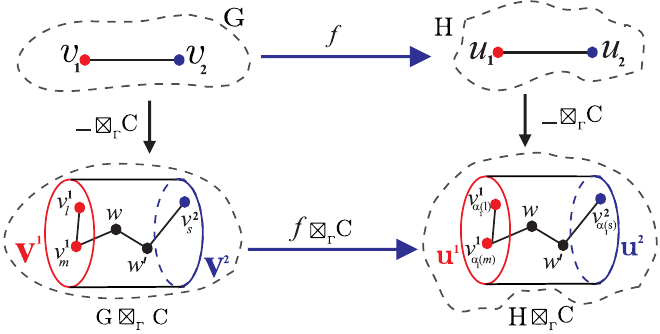}}
\caption{A commutative diagram to define $- \cyl{\Gamma} \matr{C}(f)$.}\label{functor}
\end{figure}
\begin{prepro}\label{pro:naturalcyl}
The map $- \cyl{\Gamma} \matr{C} : {\bf LGrph}(\Gamma,m) \longrightarrow {\bf LGrph}$ is a well defined functor.
\end{prepro}
\begin{proof}{
First, we should verify that for every $f \in \chom(\matr{G},\matr{G}')$, the map
$f \cyl{\Gamma} \matr{C}$ is actually a homomorphism in $\lhom(\matr{G} \cyl{\Gamma} \matr{C},\matr{G}' \cyl{\Gamma} \matr{C})$, but this is clear since by definition not only edges but cylinders are mapped to cylinders of the same type and moreover, if $e \in E(\matr{G} \cyl{\Gamma} \matr{C})$, then $(f \cyl{\Gamma} \matr{C})_{_{E}}(e)$ is an edge of $\matr{G}' \cyl{\Gamma} \matr{C}$. To see this,
we consider the first two cases as follows. The rest of the cases can be verified similarly.
\begin{itemize}
\item{{\bf $e=v^{^{i}}_{_{l}}v^{^{i}}_{_{l'}}, \ f(v_{_{i}})=u_{_{j}}$}.\\
In this case we know that $$(f \cyl{\Gamma} \matr{C})_{_{E}}(e)=u^{^{j}}_{_{\alpha_{_{i}}(l)}}u^{^{j}}_{_{\alpha_{_{i}}(l')}}$$
where $\ell^{^{v_{_{i}}}}_{_{\matr{G}}}(\tilde{e})=\gamma^-$ and $\ell^{^{u_{_{j}}}}_{_{\matr{G}'}}(f(\tilde{e}))=\alpha_{_{i}}\gamma^-$ for some edge $\tilde{e} \in E(\matr{G})$. But if $l=\gamma^-(l_{_{0}})$ then $\alpha_{_{i}}(l)=\alpha_{_{i}}\gamma^-(l_{_{0}})$ which shows that both edges $e$ and $u^{^{j}}_{_{\alpha_{_{i}}(l)}}u^{^{j}}_{_{\alpha_{_{i}}(l')}}$ correspond to the same edge in the cylinder $\matr{C}^{^{\ell^{*}_{_{\matr{G}}}(\tilde{e})}}$.
}
\item{{\bf $e=v^{^{i}}_{_{l}}v^{^{i'}}_{_{l'}}, \ f(v_{_{i}}v_{_{i'}})=u_{_{j}}u_{_{j'}}$}.\\
In this case we know that $$(f \cyl{\Gamma} \matr{C})_{_{E}}(e)=u^{^{j}}_{_{\alpha_{_{i}}(l)}}u^{^{j'}}_{_{\alpha_{_{i'}}(l')}},$$
where $\ell_{_{\matr{G}}}(v_{_{i}}v_{_{i'}})=(\gamma^-,t,\gamma^+)$ and $\ell_{_{\matr{G}}}(u_{_{j}}u_{_{j'}})=
(\alpha_{_{i}}\gamma^-,t,\alpha_{_{i'}}\gamma^+)$. But if we assume that $l=\gamma^-(l_{_{0}})$ and $l'=\gamma^+(l'_{_{0}})$ then $$\alpha_{_{i}}(l)=\alpha_{_{i}}\gamma^-(l_{_{0}}) \quad {\rm and} \quad
\alpha_{_{i'}}(l')=\alpha_{_{i'}}\gamma^+(l'_{_{0}}),$$
which shows that both edges $e$ and $u^{^{j}}_{_{\alpha_{_{i}}(l)}}u^{^{j'}}_{_{\alpha_{_{i'}}(l')}}$ correspond to the same edge in the cylinder $\matr{C}^{^{t}}$.
}
\end{itemize}
Also, one may easily verify that $1_{_{\matr{G}}} \cyl{\Gamma} \matr{C}=1_{_{\matr{G} \cyl{\Gamma} \matr{C}}}$ and that
$$(f \circ g) \cyl{\Gamma} \matr{C}=(f \cyl{\Gamma} \matr{C}) \circ (g \cyl{\Gamma} \matr{C}),$$ for every compatible pair of homomorphisms $f$ and $g$ (just attach two square like Figure~\ref{functor} side by side), which shows that $- \cyl{\Gamma} \matr{C}$ is a well-defined (covariant) functor.
}\end{proof}
On the other hand, for the exponential graph construction we have,
$$\forall\ \matr{H} \in Obj({\bf LGrph}) \quad ([\matr{C}, -]_{_{\Gamma}})(\matr{H}) \isdef [\matr{C},\matr{H}]_{_{\Gamma}},$$
$$\forall\ f \in \lhom(\matr{H},\matr{H'}) \quad ([\matr{C}, -]_{_{\Gamma}})(f) \isdef [\matr{C},f]_{_{\Gamma}} \in \chom([\matr{C},\matr{H}]_{_{\Gamma}},[\matr{C},\matr{H}']_{_{\Gamma}}),$$
where
$$\left([\matr{C},f]_{_{\Gamma}}\right)_{_{V}}(\bv) \isdef \langle f(\bv) \rangle_{_{\Gamma }}=f(\bv)\alpha_{_{\bv}},$$
and for $e=\bv\bw$ with $\ell_{_{[\matr{C},\matr{H}]_{_{\Gamma}}}}(e) \ = \ (\gamma^-,t,\gamma^+)$,
$$\left([\matr{C},f]_{_{\Gamma}}\right)_{_{E}}(e) \isdef \langle f(\bv) \rangle_{_{\Gamma}} \langle f(\bw) \rangle_{_{\Gamma}} \ \ {\rm with} \ \
\ell_{_{[\matr{C},\matr{H}']_{_{\Gamma}}}}(\langle f(\bv) \rangle_{_{\Gamma}} \langle f(\bw) \rangle_{_{\Gamma}})=(\alpha_{_{\bv}}\gamma^-,t,\alpha_{_{\bw}}\gamma^+).$$
\begin{prepro}\label{pro:naturalexp}
The map $[\matr{C}, -]_{_{\Gamma}} : {\bf LGrph} \longrightarrow {\bf LGrph}(\Gamma,m)$ is a well defined functor.
\end{prepro}
\begin{proof}{First, we should verify that for every $f \in \lhom(\matr{H},\matr{H'})$ the map $[\matr{C},f]_{_{\Gamma}}$ is actually a homomorphism in $\chom([\matr{C},\matr{H}]_{_{\Gamma}},[\matr{C},\matr{H}']_{_{\Gamma}})$.\\
For this, first note that by the definition of exponential graph construction and composition of homomorphisms, if $e=\bv\bw$ with $\ell_{_{[\matr{C},\matr{H}]_{_{\Gamma}}}}(e) \ = \ (\gamma^-,t,\gamma^+)$,
then $ \langle f(\bv) \rangle_{_{\Gamma}} \langle f(\bw) \rangle_{_{\Gamma}}$ is actually and edge of $[\matr{C},\matr{H}']_{_{\Gamma}}$ with the label $(\alpha_{_{\bv}}\gamma^-,t,\alpha_{_{\bw}}\gamma^+)$. Also, clearly by its definition and Proposition~\ref{CHOMLEMMA} this map is a homomorphism in $\chom([\matr{C},\matr{H}]_{_{\Gamma}},[\matr{C},\matr{H}']_{_{\Gamma}})$.\\
Moreover, one may verify that
$[\matr{C},1_{_{\matr{H}}}]_{_{\Gamma}}=1_{_{[\matr{C},\matr{H}]_{_{\Gamma}}}},$
and that
$$[\matr{C},f \circ g]_{_{\Gamma}}=[\matr{C},f]_{_{\Gamma}} \circ [\matr{C},g]_{_{\Gamma}},$$
for every compatible pair of homomorphisms $f$ and $g$, which shows that $[\matr{C},-]$
is a well-defined (covariant) functor.
}\end{proof} In what follows we prove that both maps $r_{_{\matr{G},\matr{H}}}$ and $s_{_{\matr{G},\matr{H}}}$ are natural with respect to indices $\matr{G}$ and $\matr{H}$. This shows that the pair $(r_{_{\matr{G},\matr{H}}},s_{_{\matr{G},\matr{H}}})$ is a weak version of an adjunct pair.
\begin{figure}[ht]
\centering{
\includegraphics[scale=.8]{./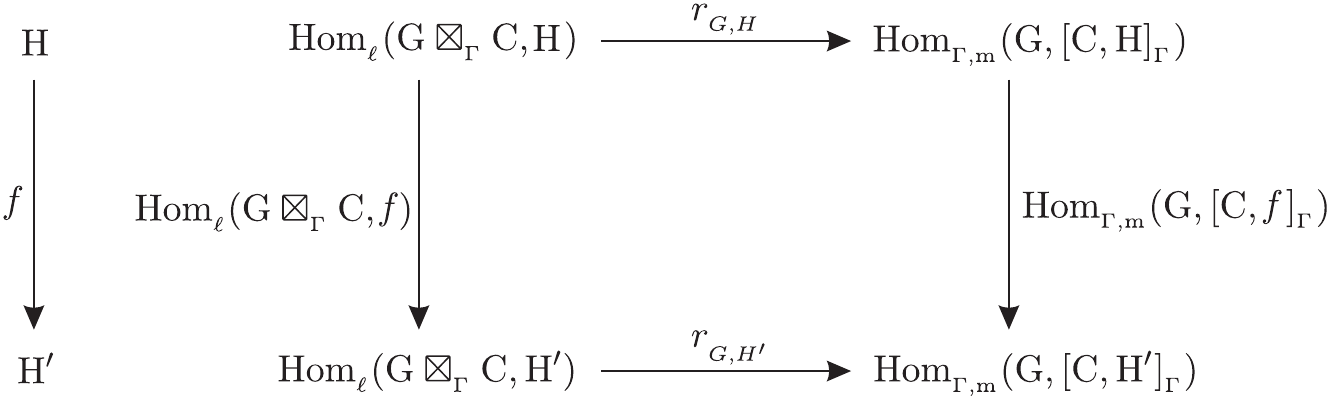}
}\caption{Naturality of $r_{_{\matr{G},\matr{H}}}$ with respect to $\matr{H}$ (see Theorem~\ref{thm:natural1}).}
\label{fig:diag1}
\end{figure}
\begin{thm}\label{thm:natural1}
Let $\matr{C}=\{\matr{C}^{^{j}}(\by^{^{j}},\by^{^{j}},\J^{^{j}}) \ \ | \ \ j \in \nat[m]\}$ be a $\Gamma$-coherent set of $(t,k)$-cylinders.
Then the retraction $r_{_{\matr{G},\matr{H}}}$ and the section $s_{_{\matr{G},\matr{H}}}$ introduced in Theorem~{\rm \ref{thm:maino}} are both natural with respect to $\matr{G}$ and $\matr{H}$.
\end{thm}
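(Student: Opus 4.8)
The plan is to verify directly the four commuting squares that express the asserted naturality: those for $r_{\matr{G},\matr{H}}$ and for $s_{\matr{G},\matr{H}}$, each considered in the variable $\matr{H}$ (covariantly, via post\-composition with $[\matr{C},f]_{\Gamma}$ for a morphism $f\in\lhom(\matr{H},\matr{H}')$) and in the variable $\matr{G}$ (contravariantly, via pre\-composition with $g\cyl{\Gamma}\matr{C}$ for a morphism $g\in\chom(\matr{G}',\matr{G})$). All the ingredients are already explicit: the formulas for $r_{\matr{G},\matr{H}}$ and $s_{\matr{G},\matr{H}}$ from the proof of Theorem~\ref{thm:maino}, the action of the exponential functor from Proposition~\ref{pro:naturalexp}, and the action of the cylindrical functor from Proposition~\ref{pro:naturalcyl}. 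Note that since $s_{\matr{G},\matr{H}}$ is only a section of $r_{\matr{G},\matr{H}}$ and not a two-sided inverse, naturality of $s$ does not follow formally from naturality of $r$ and has to be checked separately.

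I would begin with naturality of $r$ in $\matr{H}$. Fix $f\in\lhom(\matr{H},\matr{H}')$ and $\sigma=(\sigma_{_V},\sigma_{_E})\in\lhom(\matr{G}\cyl{\Gamma}\matr{C},\matr{H})$, and compare $[\matr{C},f]_{\Gamma}\circ r_{\matr{G},\matr{H}}(\sigma)$ with $r_{\matr{G},\matr{H}'}(f\circ\sigma)$. On vertices both send $v_{_i}\in V(\matr{G})$ to $\langle f(\sigma_{_V}(\bv^{i}))\rangle_{\Gamma}$: the left-hand side first forms $\langle\sigma_{_V}(\bv^{i})\rangle_{\Gamma}=\sigma_{_V}(\bv^{i})\alpha_{_i}$ and then re-takes a representative after applying $f$, while the right-hand side takes the representative of $f(\sigma_{_V}(\bv^{i}))$ directly; these agree because $\langle\bu\gamma\rangle_{\Gamma}=\langle\bu\rangle_{\Gamma}$ for every $\gamma\in\Gamma$. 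On edges one runs the same computation while tracking the $\Gamma$-factors $\alpha_{_i}$ and the label $(\gamma^-,t,\gamma^+)$ of the edge of $\matr{G}$, and uses Proposition~\ref{CHOMLEMMA} to conclude that the two resulting $(\Gamma,m)$-labels differ precisely by the factors that are absorbed when passing to orbit representatives, so the edge maps coincide as well.

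For naturality of $r$ in $\matr{G}$, fix $g\in\chom(\matr{G}',\matr{G})$; the claim becomes $r_{\matr{G}',\matr{H}}\bigl(\sigma\circ(g\cyl{\Gamma}\matr{C})\bigr)=r_{\matr{G},\matr{H}}(\sigma)\circ g$. The new input is the explicit vertex formula for $g\cyl{\Gamma}\matr{C}$ from Proposition~\ref{pro:naturalcyl}: the list $(\bv')^{i}$ of blown-up vertices over a vertex $v'_{_i}$ of $\matr{G}'$ is sent coordinate-wise to the list $\bv^{j}$ over $v_{_j}=g(v'_{_i})$, twisted by a constant $\beta_{_i}\in\Gamma$ furnished by Proposition~\ref{CHOMLEMMA} applied to $g$. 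Since this twist is again invisible to $\langle\cdot\rangle_{\Gamma}$, both sides send $v'_{_i}$ to $\langle\sigma_{_V}(\bv^{j})\rangle_{\Gamma}$, and the edge part follows from the same label bookkeeping. The two squares for $s$ are handled by the same scheme: since $s_{\matr{G},\matr{H}}(\sigma')$ is glued from the local cylinder homomorphisms $\sigma^{e}$ over the edges of $\matr{G}$, post\-composition with $f$ acts on each $\sigma^{e}$ compatibly with the gluing, and pre\-composition with $g\cyl{\Gamma}\matr{C}$ merely re-indexes which twisted cylinder sits over which edge, so the glued maps agree.

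The main obstacle is purely notational: carefully propagating the twist data ($\alpha_{_i}$ for $r$, $\beta_{_i}$ for $g\cyl{\Gamma}\matr{C}$) and the edge labels through each of the four squares, and confirming at the level of edges --- not merely vertices --- that the only discrepancies are exactly those cancelled by the choice of orbit representatives. This is where Proposition~\ref{CHOMLEMMA} and the well-definedness argument in Definition~\ref{defin:exponentialgraph} carry the load; once the $\matr{H}$-square for $r$ is written out in detail, the remaining three are essentially transcriptions with the twist data playing interchanged roles.
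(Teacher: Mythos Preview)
Your proposal is correct and follows essentially the same approach as the paper: direct verification of the commuting squares by chasing the explicit vertex and edge formulas for $r$, $s$, $[\matr{C},f]_{_\Gamma}$, and $g\cyl{\Gamma}\matr{C}$, with the key observation that the $\Gamma$-twists $\alpha_{_i}$ (respectively $\beta_{_i}$) are absorbed by $\langle\cdot\rangle_{_\Gamma}$. The only cosmetic difference is in the choice of which two squares to spell out---the paper details $r$ in $\matr{H}$ and $s$ in $\matr{G}$, while you detail $r$ in $\matr{H}$ and $r$ in $\matr{G}$---but the remaining cases are, as both you and the paper note, transcriptions of the same bookkeeping.
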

\begin{proof}{
In what follows we prove that $r_{_{\matr{G},\matr{H}}}$ is natural with respect to its second index $\matr{H}$, and that $s_{_{\matr{G},\matr{H}}}$ is natural with respect to its first index $\matr{G}$. The other two cases can be verified similarly.\\
For the first claim, assume that $V(\matr{G}) = \{v_{_{1}},v_{_{2}},\ldots,v_{_{n}}\}$ and fix homomorphisms
$$f \in \lhom(\matr{H},\matr{H}')\quad {\rm and} \quad \sigma=(\sigma_{_{V}},\sigma_{_{E}}) \in \lhom(\matr{G} \cyl{\Gamma} \matr{C},\matr{H}),$$
and let
$$r_{_{\matr{G},\matr{H}}}(\sigma)=\sigma' \in \chom(\matr{G},[\matr{C},\matr{H}]_{_{\Gamma}}), \quad {\rm and} \quad
\sigma'' \isdef \chom(\matr{G},[\matr{C},f]_{_{\Gamma}})(\sigma'),$$
(see the proof of Theorem~\ref{thm:maino} and the diagram depicted in Figure~\ref{fig:diag1}). Then, by definitions we have
$$
\begin{array}{ll}
\forall\ i \in \nat[n]\ \ \sigma_{_{V}}(\bv^{^i}) = \bu^{^i} & \Rightarrow \ \ \forall\ i \in \nat[n]\ \ \sigma'_{_{V}}(v_{_{i}}) = \langle \bu^{^i} \rangle_{_{\Gamma}}\\
&\\
&\Rightarrow \ \ \forall\ i \in \nat[n]\ \ \sigma''_{_{V}} (v_{_{i}}) = \langle f(\bu^{^i}) \rangle_{_{\Gamma}}.\\
\end{array}
$$
On the other hand, let
$$\tilde{\sigma} \isdef \lhom(\matr{G} \cyl{\Gamma} \matr{C},f)(\sigma) \quad {\rm and} \quad \tilde{\tilde{\sigma}} \isdef r_{_{\matr{G},\matr{H}'}}(\tilde{\sigma}).$$
Then, again by definitions,
$$
\begin{array}{ll}
\forall\ i \in \nat[n]\ \ \sigma_{_{V}}(\bv^{^i}) = \bu^{^i} & \Rightarrow \ \ \forall\ i \in \nat[n]\ \ \tilde{\sigma}_{_{V}}(\bv^{^i}) = f(\bu^{^i}) \\
&\\
&\Rightarrow \ \ \forall\ i \in \nat[n]\ \ \tilde{\tilde{\sigma}}_{_{V}} (v_{_{i}}) = \langle f(\bu^{^i}) \rangle_{_{\Gamma}},\\
\end{array}
$$
which clearly shows that $\sigma''_{_{V}}=\tilde{\tilde{\sigma}}_{_{V}}$.
The equality for the edge-maps also follows easily in a similar way, and
consequently, the diagram of Figure~\ref{fig:diag1} is commutative and $r_{_{\matr{G},\matr{H}}}$ is natural with respect to its second
\begin{figure}[ht]
\centering{
\includegraphics[scale=.8]{./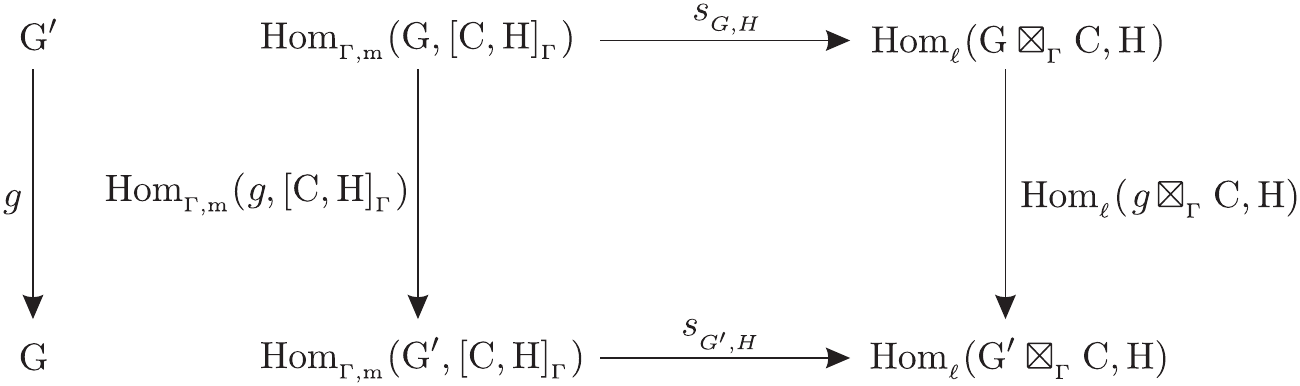}
}
\caption{Naturality of $s_{_{\matr{G},\matr{H}}}$ with respect to $\matr{G}$ (see Theorem~\ref{thm:natural1}).}
\label{fig:diag2}
\end{figure}
index $\matr{H}$.\\
For the second claim, assume that $V(\matr{G}') = \{w_{_{1}},w_{_{2}},\ldots,w_{_{m}}\}$, fix homomorphisms
$$g \in \chom(\matr{G}',\matr{G}), \ \ \quad {\rm and} \quad \sigma' \in \chom(\matr{G},[\matr{C},\matr{H}]_{_{\Gamma}}),$$
with the mapping $\beta$ such that
$$\forall\ i \in \nat[m]\ \ g(w_{_{i}}) \isdef v_{_{\beta(i)}},$$
and let $\{\alpha_{_{1}},\alpha_{_{2}},\ldots,\alpha_{_{n}}\} \subseteq \Gamma$ be obtained by Proposition~\ref{CHOMLEMMA}. Also, define,
$$\sigma \isdef s_{_{\matr{G},\matr{H}}}(\sigma') \in \lhom(\matr{G} \cyl{\Gamma} \matr{C},\matr{H}) \quad {\rm and} \quad \sigma'' \isdef \lhom(g \cyl{\Gamma} \matr{C},\matr{H})(\sigma). $$
(see the proof of Theorem~\ref{thm:maino} and the diagram depicted in Figure~\ref{fig:diag2}). Then, by definitions for all $i \in \nat[n]$ we may choose $\bu^{^i}$ such that for all $j \in \nat[m]$,
$$
\begin{array}{ll}
\sigma'_{_{V}}(v_{_{i}}) = \langle \bu^{^i} \rangle_{_{\Gamma}}= \bu^{^i} \alpha_{_{i}}& \Rightarrow \ \ \sigma_{_{V}}(\bv^{^i}) = \bu^{^i}\\
&\\
&\Rightarrow \ \ \sigma''_{_{V}} (\bw^{^j}) = \sigma_{_{V}}(g(\bw^{^j}))=\sigma_{_{V}}(\bv^{\beta(j)})=\bu^{\beta(j)}.\\
\end{array}
$$
On the other hand, let
$$\tilde{\sigma} \isdef \chom(g,[\matr{C},\matr{H}]_{_{\Gamma}})(\sigma') \quad
{\rm and} \quad \tilde{\tilde{\sigma}} \isdef s_{_{\matr{G}',\matr{H}}}(\tilde{\sigma}).$$
Then, for all $i \in \nat[n]$ and $j \in \nat[m]$,
$$
\begin{array}{ll}
\sigma'_{_{V}}(v_{_{i}}) = \langle \bu^{^i} \rangle_{_{\Gamma}} = \bu^{^i} \alpha_{_{i}}& \Rightarrow \ \ \tilde{\sigma}_{_{V}}(w_{_{j}}) = \sigma'_{_{V}}(g(w_{_{j}}))=\sigma'_{_{V}}(v_{_{\beta(j)}})=\langle \bu^{\beta(j)} \rangle_{_{\Gamma}}=\bu^{\beta(j)} \alpha_{_{\beta(j)}}\\
&\\
&\Rightarrow \ \ \tilde{\tilde{\sigma}}_{_{V}}(\bw^{^j}) = s_{_{\matr{G}',\matr{H}}}(\tilde{\sigma})(\bw^{^j})=\bu^{^{\beta(j)}},\\
\end{array}
$$
which shows that $\tilde{\tilde{\sigma}}_{_{V}}=\sigma''_{_{V}}$.
The equality for the edge-maps also follows easily in a similar way, and
consequently, the diagram of Figure~\ref{fig:diag2} is commutative and $s_{_{\matr{G},\matr{H}}}$ is natural with respect to its first
index $\matr{G}$.
}\end{proof}

\begin{pro}{\label{pro:basics2}
The standard line-graph construction is not cylindrical, i.e. if $L(\matr{H})$ is the standard line-graph of the fixed graph $\matr{H}$, then there is no $\Gamma$-coherent set of cylinders $\matr{C}$ for which the following holds,
$$\forall\ \matr{H'}\homeq \matr{H}, \quad [\matr{C},\matr{H'}]_{_{\Gamma}} \homeq L(\matr{H'}). $$
}\end{pro}
\begin{proof}{
Assume that there exists $\matr{C}$ such that
$$\forall\ \matr{H'}\homeq \matr{H}, \quad [\matr{C},\matr{H'}]_{_{\Gamma}} \homeq L(\matr{H'}). $$
Also, let $\chi(L(\matr{H}))=n$, and let $\matr{H'} \isdef \matr{H}(v)+\matr{St}_{_{n+1}}(v)$ be the graph obtained by identifying the central vertex $v$ of the star graph on $n+1$ vertices, $\matr{St}_{_{n+1}}$, with an arbitrary vertex of $\matr{H}$.
Clearly, $\matr{K}_{_{n+1}}\rightarrow L(\matr{H'})$.

 Note that $\matr{H}\homeq \matr{H'}$ and $L(\matr{H'}) \nrightarrow L(\matr{H})$, i.e.
$[\matr{C},\matr{H'}]_{_{\Gamma}} \nrightarrow [\matr{C},\matr{H}]_{_{\Gamma}},$
that contradicts the functoriality of the exponential construction.
}\end{proof}

\begin{prepro}{
Let $\matr{G}$ be a $\Gamma$-graph and $\matr{C}$ be a $\Gamma$-coherent cylinder, also let $\matr{H}$ be a labeled graph, then,
\begin{enumerate}
\item[{\rm (1)}] If $\sigma \in \lhom(\matr{G}\cyl{\Gamma} \matr{C}, \matr{H})$ is vertex-injective, then $r_{_{\matr{G},\matr{H}}}(\sigma) \in \ghom(\matr{G},[ \matr{C}, \matr{H}]_{_\Gamma})$ is also vertex-injective.
\item[{\rm (2)}]
The homomorphism $r_{_{\matr{G},\matr{H}}}(\sigma) \in \ghom(\matr{G},[ \matr{C}, \matr{H}]_{_\Gamma})$ is not necessarily vertex-surjective,
$($even if $\sigma \in \lhom(\matr{G}\cyl{\Gamma} \matr{C}, \matr{H})$ is an isomorphism$)$.
\end{enumerate}
}\end{prepro}
\begin{proof}{\
\begin{enumerate}
\item[{\rm (1)}]
Let $\sigma \in \lhom(\matr{G}\cyl{\Gamma} \matr{C}, \matr{H})$ be vertex-injective and $r_{_{\matr{G},\matr{H}}}(\sigma) \in \ghom(\matr{G},[ \matr{C}, \matr{H}]_{_\Gamma})$.
If $r_{_{\matr{G},\matr{H}}}(\sigma)(v_{_i})=<\sigma(\bv_{_i})>_{_\Gamma}$, where $i\in \{1,2\}$, then
$\sigma(\bv_{_1})$ and $\sigma(\bv_{_2})$ have not any vertex in common in $\matr{H}$, so
$<\sigma(\bv_{_i})>_{_\Gamma}\neq <\sigma(\bv_{_i})>_{_\Gamma}$, which means that $r_{_{\matr{G},\matr{H}}}(\sigma)(v_{_1})\neq r_{_{\matr{G},\matr{H}}}(\sigma)(v_{_2})$.

 \item[{\rm (2)}]
Just set $\matr{G}=\matr{C}_{_{r}},\ \matr{C}=\matr{P}_{_{t}}$ and $\matr{H}=\matr{C}_{_{rt}}$, where $r,t>1$, then it is easy to check that $id \in \lhom(\matr{G}\cyl{\Gamma} \matr{C}, \matr{H})= \lhom(\matr{C}_{_{rt}},\matr{C}_{_{rt}})$ is an isomorphism,

 Now $[\matr{P}_{_{t}},\matr{C}_{_{rt}}]$ has order $rt$ and the order of $\matr{C}_{_{r}}$ is $r$. Hence, there is not any surjective homomorphism in $\hom(\matr{G},[ \matr{C}, \matr{H}])=\hom(\matr{C}_{_{r}}, [\matr{P}_{_{t}},\matr{C}_{_{rt}}])$.
\end{enumerate}
}\end{proof}

\section{Adjunctions and reductions}\label{sec:adjred}

 Note that Theorem~\ref{thm:maino} and naturalities proved in previous section imply that we have an adjunction
$- \cyl{\Gamma} \matr{C} \dashv [\matr{C}, -]_{_{\Gamma}}$
between ${\bf LGrph}_{_{\leq}}(\Gamma,m)$ and ${\bf LGrph}_{_{\leq}}$, however, this correspondence is not necessarily an adjunction between ${\bf LGrph}(\Gamma,m)$ and ${\bf LGrph}$.

 In this section we consider conditions under which a class of cylinders reflects
an adjunction between ${\bf LGrph}(\Gamma,m)$ and ${\bf LGrph}$, that will be called {\it tightness}.
We will also consider weaker conditions called {\it upper} and {\it lower} closedness and will consider some examples
and consequences related to computational complexity of graph homomorphism problem (e.g. see \cite{HENE} for the background and some special cases).

 \begin{lem}\label{lem:tightnes}
Let $\matr{C}=\{\matr{C}^{^{j}}(\by^{^{j}},\by^{^{j}},\J^{^{j}}) \ \ | \ \ j \in \nat[m]\}$ be a set
of $\Gamma$-coherent $(t,k)$-cylinders, and $\matr{H}$ be a labeled graph. Then the following statements are equivalent,
\begin{itemize}
\item[{\rm a)}]{For any $(\Gamma,m)$-graph $\matr{G}$ the retraction $r_{_{\matr{G},\matr{H}}}$ is a one-to-one map $($i.e. is invertible with $r^{-1}_{_{\matr{G},\matr{H}}}=s_{_{\matr{G},\matr{H}}})$. }
\item[{\rm b)}]{For any $(\Gamma,m)$-graph $\matr{G}$ we have
$$|\lhom(\matr{G} \cyl{\Gamma} \matr{C},\matr{H})| = |\chom(\matr{G},[\matr{C},\matr{H}]_{_{\Gamma}})|.$$}
\item[{\rm c)}]{For any $(\Gamma,m)$-graph $\matr{G}$ and any $\sigma \in \chom(\matr{G},[\matr{C},\matr{H}]_{_{\Gamma}})$
there is a unique $\rho \in \lhom(\matr{G} \cyl{\Gamma} \matr{C},\matr{H})$ such that $[\matr{C},\rho]_{_{\Gamma}} \circ \eta_{_{\matr{G}}}=\sigma$.
}
\end{itemize}
\end{lem}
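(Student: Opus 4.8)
The natural strategy is to prove the cyclic chain of implications $(a) \Rightarrow (b) \Rightarrow (c) \Rightarrow (a)$, leaning throughout on Theorem~\ref{thm:maino}, which already gives us the retraction/section pair $(r_{_{\matr{G},\matr{H}}},s_{_{\matr{G},\matr{H}}})$ with $r_{_{\matr{G},\matr{H}}} \circ s_{_{\matr{G},\matr{H}}} = {\bf 1}$. The guiding observation is that $r$ is \emph{always} surjective (because it has a section $s$), so asserting that $r$ is one-to-one is the same as asserting that $r$ is a bijection, and then automatically $r^{-1}=s$ since a surjection with a right inverse that is also injective has that right inverse as a genuine two-sided inverse. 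This makes $(a) \Leftrightarrow (b)$ essentially a counting restatement: $r_{_{\matr{G},\matr{H}}}$ is a surjection between the finite sets $\lhom(\matr{G} \cyl{\Gamma} \matr{C},\matr{H})$ and $\chom(\matr{G},[\matr{C},\matr{H}]_{_{\Gamma}})$, and a surjection between finite sets is injective precisely when the two sets have the same cardinality. (One must note the sets are finite because the graphs involved are finite; here I would invoke the standing finiteness hypothesis on graphs from Section~\ref{sec:basics}.)

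\textbf{Carrying it out.} First I would establish $(a) \Rightarrow (b)$: if $r_{_{\matr{G},\matr{H}}}$ is one-to-one it is a bijection (being already onto via $s$), so the cardinalities agree. Next, $(b) \Rightarrow (a)$: given $|\lhom(\matr{G} \cyl{\Gamma} \matr{C},\matr{H})| = |\chom(\matr{G},[\matr{C},\matr{H}]_{_{\Gamma}})|$, the surjection $r_{_{\matr{G},\matr{H}}}$ between finite sets of equal size is forced to be injective, hence invertible, and then $r^{-1}_{_{\matr{G},\matr{H}}} \circ r_{_{\matr{G},\matr{H}}} = {\bf 1}$ combined with $r_{_{\matr{G},\matr{H}}} \circ s_{_{\matr{G},\matr{H}}} = {\bf 1}$ yields $r^{-1}_{_{\matr{G},\matr{H}}} = s_{_{\matr{G},\matr{H}}}$ by the usual one-line argument. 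For $(a) \Rightarrow (c)$: take $\sigma \in \chom(\matr{G},[\matr{C},\matr{H}]_{_{\Gamma}})$ and set $\rho \isdef s_{_{\matr{G},\matr{H}}}(\sigma)$; I need to check that the naturality/unit condition $[\matr{C},\rho]_{_{\Gamma}} \circ \eta_{_{\matr{G}}}=\sigma$ holds and that $\rho$ is the unique such homomorphism. Existence should follow by unwinding the explicit construction of $s_{_{\matr{G},\matr{H}}}$ in the proof of Theorem~\ref{thm:maino} together with the functoriality of $[\matr{C},-]_{_{\Gamma}}$ (Proposition~\ref{pro:naturalexp}) and the naturality of $s$ (Theorem~\ref{thm:natural1}); indeed $[\matr{C},\rho]_{_{\Gamma}} \circ \eta_{_{\matr{G}}}$ should equal $r_{_{\matr{G},\matr{H}}}(\rho)$, which is $r_{_{\matr{G},\matr{H}}}(s_{_{\matr{G},\matr{H}}}(\sigma))=\sigma$. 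For uniqueness: if $\rho'$ also satisfies $[\matr{C},\rho']_{_{\Gamma}} \circ \eta_{_{\matr{G}}}=\sigma$, then reading the same identity backwards gives $r_{_{\matr{G},\matr{H}}}(\rho')=\sigma = r_{_{\matr{G},\matr{H}}}(\rho)$, and since $r_{_{\matr{G},\matr{H}}}$ is one-to-one by $(a)$, we get $\rho'=\rho$. Finally, $(c) \Rightarrow (a)$: if $r_{_{\matr{G},\matr{H}}}(\rho_{_1})=r_{_{\matr{G},\matr{H}}}(\rho_{_2})=\sigma$, then each $\rho_{_i}$ satisfies $[\matr{C},\rho_{_i}]_{_{\Gamma}} \circ \eta_{_{\matr{G}}}=\sigma$ (again by expressing $[\matr{C},\rho]_{_{\Gamma}}\circ\eta_{_{\matr{G}}}$ as $r_{_{\matr{G},\matr{H}}}(\rho)$), so the uniqueness in $(c)$ forces $\rho_{_1}=\rho_{_2}$, i.e. $r_{_{\matr{G},\matr{H}}}$ is one-to-one.

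\textbf{Anticipated obstacle.} The only genuinely nontrivial point is the identity $[\matr{C},\rho]_{_{\Gamma}} \circ \eta_{_{\matr{G}}} = r_{_{\matr{G},\matr{H}}}(\rho)$, i.e. that the unit $\eta_{_{\matr{G}}}$ of the adjunction composed with the functorial action of $[\matr{C},-]_{_{\Gamma}}$ on $\rho$ recovers the retraction $r_{_{\matr{G},\matr{H}}}$. This is exactly the triangle identity for the (weak) adjunction and should drop out of the explicit formulas for $r_{_{\matr{G},\matr{H}}}$, for $\eta_{_{\matr{G}}} = r_{_{\matr{G},\matr{G}\cyl{\Gamma}\matr{C}}}({\bf 1})$, and for the functor $[\matr{C},-]_{_{\Gamma}}$ from Section~\ref{sec:cat}, but checking it requires a careful bookkeeping of the representative-shift elements $\alpha_{_i} \in \Gamma$ and the labels $(\alpha_{_i}\gamma^-,t,\alpha_{_j}\gamma^+)$ appearing in those formulas, using Proposition~\ref{CHOMLEMMA} to see the $\alpha$'s match up. Once that naturality-of-$r$ bookkeeping is in hand, the rest of the equivalences are formal finite-set and right-inverse arguments.
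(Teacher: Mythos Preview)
Your proposal is correct and follows essentially the same approach as the paper: both arguments split into $(a)\Leftrightarrow(b)$ via the elementary observation that $r_{_{\matr{G},\matr{H}}}$ is a surjection between finite sets (thanks to the section $s_{_{\matr{G},\matr{H}}}$), and $(a)\Leftrightarrow(c)$ via the key identity $[\matr{C},\rho]_{_{\Gamma}}\circ\eta_{_{\matr{G}}}=r_{_{\matr{G},\matr{H}}}(\rho)$, which the paper also verifies by the direct vertex-level computation $\rho_{_V}(\bv^{^i})=\bu^{^i}\Rightarrow [\matr{C},\rho]_{_{\Gamma}}\circ\eta_{_{\matr{G}}}(v_{_i})=\langle\bu^{^i}\rangle_{_{\Gamma}}$ that you anticipated.
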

\begin{proof}{
$(\mathrm{a} \Leftrightarrow \mathrm{b})$: Considering the retraction
$$r_{_{\matr{G},\matr{H}}}: \lhom(\matr{G} \cyl{\Gamma} \matr{C},\matr{H}) \rightarrow \chom(\matr{G},[\matr{C},\matr{H}]_{_{\Gamma}})$$
and the section
$$s_{_{\matr{G},\matr{H}}}: \chom(\matr{G},[\matr{C},\matr{H}]_{_{\Gamma}}) \rightarrow \lhom(\matr{G} \cyl{\Gamma} \matr{C},\matr{H}). $$
It is easy to see that the retraction is one-to-one if and only if the section is so and this is equivalent to
$$|\lhom(\matr{G} \cyl{\Gamma} \matr{C},\matr{H})| = |\chom(\matr{G},[\matr{C},\matr{H}]_{_{\Gamma}})|.$$
$(\mathrm{a} \Leftrightarrow \mathrm{c})$: Let $\rho \isdef r_{_{\matr{G},\matr{H}}} (\sigma). $
Then by definitions
$$
\begin{array}{ll}
\forall v_{_{i}} \in V(\matr{G}) \ \ \rho_{_{V}}(\bv^{^i}) = \bu^{^i}& \Rightarrow \forall v_{_{i}} \in V(\matr{G}) \ \ [\matr{C},\rho]_{_{\Gamma}}
(\langle \bv^{^i} \rangle_{_{\Gamma}} ) = \langle \bu^{^i} \rangle_{_{\Gamma}}\\
&\\
&\Rightarrow \forall v_{_{i}} \in V(\matr{G}) \ \ [\matr{C},\rho]_{_{\Gamma}} \circ \eta_{_{\matr{G}}} (v_{_{i}}) = \langle \bu^{^i} \rangle_{_{\Gamma}},
\end{array}
$$
which means that $ [\matr{C},\rho]_{_{\Gamma}} \circ \eta_{_{\matr{G}}}$ and $\sigma$ act the same on vertices, and consequently, since they are homomorphisms, they also have the same action on edges.\\
Now, if the retraction is one-to-one, then there is a unique map $\rho \isdef r_{_{\matr{G},\matr{H}}} (\sigma) $ for which
$[\matr{C},\rho]_{_{\Gamma}} \circ \eta_{_{\matr{G}}}=\sigma$ and vice versa.
}\end{proof}

 Note that for any $(\Gamma,m)$-graph, $\matr{G}$, we have
$$\chom(\matr{G},[\matr{C},\matr{G} \cyl{\Gamma} \matr{C}]_{_{\Gamma}}) \not = \emptyset,$$
since
$$\eta_{_{\matr{G}}} \isdef r_{_{\matr{G},\matr{G} \cyl{\Gamma} \matr{C}}}(1_{_{\matr{G} \cyl{\Gamma} \matr{C}}}) \in
\chom(\matr{G},[\matr{C},\matr{G} \cyl{\Gamma} \matr{C}]_{_{\Gamma}}),$$
and moreover, it is easy to see that $\eta : {\bf 1}_{_{{\bf LGrph(\Gamma,m)}}} \Rightarrow [\matr{C}, -]_{_{\Gamma}} \circ (- \cyl{\Gamma} \matr{C})$,
is a natural transformation.

 Similarly, for any labeled graph $\matr{H}$ we have $\lhom([\matr{C},\matr{H}]_{_{\Gamma}} \cyl{\Gamma} \matr{C},\matr{H}) \not = \emptyset$,
since $$\varepsilon_{_{\matr{H}}} \isdef s_{_{[\matr{C},\matr{H}]_{_{\Gamma}},\matr{H}}}(1_{_{[\matr{C},\matr{H}]_{_{\Gamma}}}}) \in
\lhom([\matr{C},\matr{H}]_{_{\Gamma}} \cyl{\Gamma} \matr{C},\matr{H}),$$
and moreover, it is easy to see that
$\varepsilon : {\bf 1}_{_{{\bf LGrph}}} \Rightarrow (- \cyl{\Gamma} \matr{C}) \circ [\matr{C}, -]_{_{\Gamma}}$,
is a natural transformation.

 \begin{defin}{{\bf Tightness}\\
A $\Gamma$-coherent set of $(t,k)$-cylinders $\matr{C}=\{\matr{C}^{^{j}}(\by^{^{j}},\by^{^{j}},\J^{^{j}}) \ \ | \ \ j \in \nat[m]\}$
is said to be {\it tight} with respect to a labeled graph $\matr{H}$, if it satisfies any one of the equivalent conditions of
Lemma~\ref{lem:tightnes}.\\
A $\Gamma$-coherent set of $(t,k)$-cylinders $\matr{C}$ is said to be {\it lower-closed} with respect to a $(\Gamma,m)$-graph $\matr{H}$ if
$$\chom([\matr{C},\matr{H} \cyl{\Gamma} \matr{C}]_{_{\Gamma}},\matr{H}) \not = \emptyset.$$
Also, $\matr{C}$ is said to be {\it upper-closed} with respect to a labeled graph $\matr{G}$ if
$$\lhom(\matr{G},[\matr{C},\matr{G}]_{_{\Gamma}} \cyl{\Gamma} \matr{C}) \not = \emptyset.$$
}\end{defin}

 It is clear that a $\Gamma$-coherent set of $(t,k)$-cylinders $\matr{C}$ is {\it lower-closed} with respect to a $(\Gamma,m)$-graph $\matr{H}$
if and only if $[\matr{C},\matr{H} \cyl{\Gamma} \matr{C}]_{_{\Gamma}} \homeq \matr{H}$,
and similarly, $\matr{C}$ is {\it upper-closed} with respect to a labeled graph $\matr{G}$ if and only if
$[\matr{C},\matr{G}]_{_{\Gamma}} \cyl{\Gamma} \matr{C} \homeq \matr{G}$.
Also, it should be noted that if a cylinder is {\it strong} in the sense of \cite{HENE}, i.e.
\begin{itemize}
\item{A $(t,k)$-cylinder $\matr{C}$ is {\it strong} if for any irreflexive $(\Gamma,m)$-graph $\matr{H}$, and any homomorphism
$\sigma: \matr{C} \longrightarrow \matr{H} \cyl{\Gamma} \matr{C}$, the homomorphic image $\sigma(\matr{C})$ is contained in some cylinder $\matr{C}$.}
\end{itemize}
then any strong cylinder is lower-closed with respect to all irreflexive $(\Gamma,m)$-graphs.

 \begin{exm}{\label{exm:square}
\begin{enumerate}
\item {\it $\square$ is lower-closed}\\
If $|E(\matr{G})|=m$, then $[ \square ,\matr{G}\boxtimes \square ] \simeq \matr{G} \cup \matr{I}_{m}$, where the cylinder
$\square$ is depicted in Figure~\ref{fig:square}$(a)$. Therefore, $ \square$ is lower-closed with respect to any graph.
\begin{figure}[ht]
\centerline{
\includegraphics[width=7cm]{./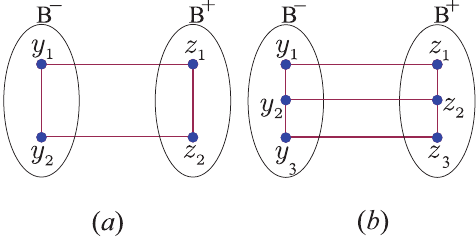}}
\caption{$(a)$ The cylinder $\square$, \ $(b)$ The cylinder $\boxminus$.}\label{fig:square}
\end{figure}
\item {\it Any non-empty bipartite cylinder is upper-closed with respect to any bipartite graph.}\\
Let $ \matr{C}$ and $ \matr{G}$ be non-empty and bipartite. Therefore, $ \matr{C} \rightarrow \matr{G}$, and $[\matr{C},\matr{G}]$ and consequently $[\matr{C},\matr{G}]\cyl{}\matr{C}$ is non-empty. Thus, since $\matr{G}$ is bipartite $\matr{G} \rightarrow [\matr{C},\matr{G}]\cyl{}\matr{C}$.
\item
\begin{itemize}
\item{ {\it $\matr{C}_{\times}(\matr{G})$ is tight with respect to any graph $\matr{H}$.}\\
Since $|\matr{Hom}(\matr{G}\times \matr{F}, \matr{H})| = |\matr{Hom}(\matr{F}, \matr{H}^\matr{G})|$ (e.g. see \cite{HENE}),
we have
$$|\matr{Hom}(\matr{H}\cyl{} \matr{C}_{\times}(\matr{G}), \matr{F})| = |\matr{Hom}(\matr{H}, [\matr{C}_{\times}(\matr{G}), \matr{H}] )|,$$ and consequently $\matr{C}_{\times}(\matr{G})$ is tight.
}
\item{ {\it $\matr{C}_{\times}(\matr{G})$ is upper-closed with respect to $\matr{H}$, if and only if $\matr{G} \rightarrow \matr{H}$.}\\
Since $\hom(\matr{G},\matr{G}^{\matr{H}}) \neq \emptyset$, we have
$$\hom(\matr{H}, [\matr{C}_{\times}(\matr{G}),\matr{H}]\cyl{} \matr{C}_{\times}(\matr{G})) \neq \emptyset \Leftrightarrow \hom(\matr{H},\matr{H}^{\matr{G}}\times \matr{H}) \neq \emptyset.$$
But $|\matr{Hom}(\matr{H}, \matr{H}^\matr{G}\times \matr{G})|=|\matr{Hom}(\matr{H}, \matr{H}^\matr{G})|. |\matr{Hom}(H,\matr{G})|$ (e.g. see \cite{HENE}), and by assuming $\hom(\matr{H},\matr{G}) \neq \emptyset$, we have $\matr{Hom}(\matr{H}, \matr{H}^\matr{G}\times \matr{G}) \neq \emptyset$. It is easy to check the reverse implication.
}

 \item{ {\it $\matr{C}_{\times}(\matr{G})$ is not lower-closed with respect to a simple graph $\matr{H}$, if $\matr{G} \rightarrow \matr{H}$.}\\
If $\matr{G} \rightarrow \matr{H}$ then $\matr{G} \rightarrow \matr{G} \times \matr{H}$. Also, $G \approx Gl(\matr{C}_{\times}(\matr{G}))$, and consequently $Gl(\matr{C}_{\times}(\matr{G})) \rightarrow \matr{G} \times \matr{H}$. Thus,
$$(\matr{G} \times \matr{H})^\matr{G} \simeq [\matr{C}_{\times}(\matr{G}),\matr{H} \cyl{} \matr{C}_{\times}(\matr{G})]=[\matr{C}_{\times}(\matr{G}), \matr{G} \times\matr{H}]$$
has a loop (consider the fact that if $Gl(\matr{C})\rightarrow \matr{H}$, then $[\matr{C}, \matr{H}]$ has a loop), and
since $\matr{H}$ does not have any loop,
$$\hom([\matr{C}_{\times}(\matr{G}),\matr{H} \cyl{} \matr{C}_{\times}(\matr{G})], \matr{H})=\emptyset.$$
}
\end{itemize}

 The cylinder $\matr{C}_{\times}(\matr{G})$ shows that a tight cylinder may neither be lower nor upper-closed.

 \end{enumerate}
}\end{exm}
\begin{prelem}
Let $\matr{C}$ be a $\Gamma$-coherent set of $(t,k)$-cylinders. Then,
\begin{itemize}
\item[{\rm a)}]{If $\matr{C}$ is lower-closed with respect to a $(\Gamma,m)$-graph, $\matr{H}$, then
$$\lhom(\matr{G} \cyl{\Gamma} \matr{C},\matr{H} \cyl{\Gamma} \matr{C}) \not = \emptyset \quad \Leftrightarrow \quad
\chom(\matr{G},\matr{H}) \not = \emptyset.$$}
\item[{\rm b)}]{If $\matr{C}$ is upper-closed with respect to a labeled graph $\matr{G}$, then
$$\chom([\matr{C},\matr{G}]_{_{\Gamma}},[\matr{C},\matr{H}]_{_{\Gamma}}) \not = \emptyset \quad \Leftrightarrow \quad
\lhom(\matr{G},\matr{H}) \not = \emptyset.$$}
\end{itemize}
\end{prelem}
\begin{proof}{
To prove (a) by Theorem~\ref{thm:maino} and the definition we have,
$$\begin{array}{ll}
\lhom(\matr{G} \cyl{\Gamma} \matr{C},\matr{H} \cyl{\Gamma} \matr{C}) \not = \emptyset& \Leftrightarrow \ \
\chom(\matr{G},[\matr{C},\matr{H} \cyl{\Gamma} \matr{C}]_{_{\Gamma}}) \not = \emptyset\\
& \Rightarrow \ \ \chom(\matr{G},\matr{H}) \not = \emptyset.\\
\end{array}
$$
The inverse implication is clear by Proposition~\ref{pro:naturalcyl}.
Similarly, to prove (b) we have,
$$
\begin{array}{ll}
\chom([\matr{C},\matr{G}]_{_{\Gamma}},[\matr{C},\matr{H}]_{_{\Gamma}}) \not = \emptyset& \Leftrightarrow \ \
\lhom([\matr{C},\matr{G}]_{_{\Gamma}} \cyl{\Gamma} \matr{C},\matr{H}) \not = \emptyset\\
& \Rightarrow \ \ \chom(\matr{G},\matr{H}) \not = \emptyset.\\
\end{array}
$$
The inverse implication is clear by Proposition~\ref{pro:naturalexp}.
}\end{proof}
\begin{pro}{\label{pro:con}
Suppose that $\matr{C}$ is a connected cylinder, then $\matr{C}$ is not tight with respect to a graph $\matr{H}$ if and only if it is not tight with respect to at least one of its connected components.
}\end{pro}
\begin{proof}{
If we decompose $\matr{H}$ to the connected components $$\matr{H}=\matr{H}_{_1}\cup \matr{H}_{_2} \cup . . . \cup \matr{H}_{_s},$$
we can see that
\begin{align*}
|\matr{Hom}(\matr{G}\cyl{}\matr{C},\matr{H})|&=
\sum_{i=1}^s|\matr{Hom}(\matr{G}\cyl{}\matr{C},\matr{H}_i)| \\
&\geq \sum_{i=1}^s|\matr{Hom}(\matr{G},[\matr{C},\matr{H}_i])| \\
&=|\matr{Hom}(\matr{G},\cup_{i=1}^s[\matr{C},\matr{H}_i])|= |\matr{Hom}(\matr{G},[\matr{C},\matr{H}])|.
\end{align*}
Therefore, since for all $i\in\{1,2,...,s\}$ we have
$|\matr{Hom}(\matr{G}\cyl{}\matr{C},\matr{H}_i)|\geq |\matr{Hom}(\matr{G},[\matr{C},\matr{H}_i])|,$
the strict inequality
$$|\matr{Hom}(\matr{G}\cyl{}\matr{C},\matr{H})|> |\matr{Hom}(\matr{G},[\matr{C},\matr{H}])|$$
holds for $\matr{H}$ if and only if the strict inequality
$$|\matr{Hom}(\matr{G}\cyl{}\matr{C},\matr{H}_i)|> |\matr{Hom}(\matr{G},[\matr{C},\matr{H}_i])|$$
holds for one of the indices $i\in\{1,2,...,s\}$.
}\end{proof}

 \begin{exm}{ \textbf{Tightness of paths.}\\
\textbf{{\small 1- Odd paths}}
\begin{itemize}
\item[{\rm (1)}] {\it Let $k>1$ and $\matr{H}$ be a simple graph in which $\matr{C}^1,\matr{C}^2, \dots \matr{C}^s$ are $s$ distinct odd cycles of lengths $c_1,c_2, . . . , c_s$ with $c_i \leq 2k+1$ for all $1 \leq i \leq s$. If $2\sum_{i=1}^s c_i>|V(\matr{H})|$, then $\matr{P}_{_{2k+1}}$ is not tight with respect to $\matr{H}$.}

 \indent Note that if $\matr{C}_{_{r}}$ and $\matr{C}_{_{s}}$, are two odd cycles with $s\leq r$, then there exist at least $2s$ distinct homomorphisms from $\matr{C}_{_{r}}$ to $\matr{C}_{_{s}}$, (mark vertices of $\matr{C}_{_{s}}$ and $\matr{C}_{_{r}}$ in circular order and for any $i\in \{1,2, . . . ,s\}$, consider the homomorphisms $1 \mapsto i, 2\mapsto i+1, 3\mapsto i+2, ...$ and $1 \mapsto i, 2\mapsto i-1, 3\mapsto i-2, ...$).

 Now, consider two distinct odd cycles $\matr{C}^1,\matr{C}^2$ of lengths $c_1,c_2\leq 2k+1$ in $\matr{H}$,
and note that since any homomorphism to an odd cycle is a vertex-surjective map, for odd $r$
$$\matr{Hom}(\matr{C}_{_{r}},\matr{C}^1) \cap \matr{Hom}(\matr{C}_{_{r}},\matr{C}^2) = \emptyset.$$
Therefore, if $\matr{L}$ is a loop on one vertex, then
$$|\matr{Hom}(\matr{L}\cyl{}\matr{P}_{_{2k+1}},\matr{H})|=|\matr{Hom}(\matr{C}_{_{2k+1}},\matr{H})|\geq 2\sum_{i=1}^s c_i.$$
On the other hand, since $[\matr{P}_{_{2k+1}}, \matr{H}]$ has at most $|V(\matr{H})|$ loops we have,
$$|\matr{Hom}(\matr{L}, [\matr{P}_{_{2k+1}}, \matr{H})])|\leq |V(\matr{H})|,$$
and consequently,
$$|\matr{Hom}(\matr{L}\cyl{}\matr{P}_{_{2k+1}},\matr{H})|\geq 2\sum_{i=1}^s c_i>|V(\matr{H})|\geq |\matr{Hom}(\matr{L},[\matr{P}_{_{2k+1}},\matr{H}])|.$$

 \item[{\rm (2)}] {\it ${\matr{P}_{_{2k+1}}}$ is lower-closed with respect to any simple graph $\matr{G}$.}

 By \cite{haji} we have $\hom(\matr{G}^{\frac{2k+1}{2k+1}},\matr{G})\neq \emptyset$.
On the other hand, we have
$$\hom([\matr{P}_{_{2k+1}},\matr{G}\cyl{} \matr{P}_{_{2k+1}}],\matr{G})=\hom(\matr{G}^{\frac{2k+1}{2k+1}},\matr{G}),$$
and consequently, $\matr{P}_{_{2k+1}}$ is lower-closed with respect to $\matr{G}$.
\item[{\rm (3)}] {\it ${\matr{P}_{_{2k+1}}}$ is not upper-closed with respect to $\matr{G}$, if $\matr{G}$ has odd girth less than or equal to $2k+1$.}

 We have
$$\hom(\matr{G},[\matr{P}_{_{2k+1}},\matr{G}]\cyl{} \matr{P}_{_{2k+1}})=\hom(\matr{G}, (\matr{G}^{2k+1})^{\frac{1}{2k+1}}),$$
but the smallest odd cycle of $(\matr{G}^{2k+1})^{\frac{1}{2k+1}}$ is $2k+1$, and consequently, the odd girth of $\matr{G}$ is less than the odd girth of
$(\matr{G}^{2k+1})^{\frac{1}{2k+1}}$. Hence, there is not any homomorphism from $\matr{G}$ to
$(\matr{G}^{2k+1})^{\frac{1}{2k+1}}$.
\end{itemize}

 \textbf{{\small 2- Even paths}}
\begin{itemize}
\item[{\rm (1)}] {\it The cylinder $\matr{P}_{_{2k}}$ is tight with respect to $\matr{H}$, if and only if $\matr{H}$ is a matching.}

 By Proposition~\ref{pro:con}, we may assume that $\matr{H}$ is connected.
Also, if $\matr{H}$ is a connected graph that has a vertex of degree $2$, then we have $2|E(\matr{H})|>|V(\matr{H})|$.\\
If $\matr{L}$ is a loop on a single vertex, since there exist at least two homomorphisms from an even cycle to an edge, we have
$$|\matr{Hom}(\matr{L}\cyl{}\matr{P}_{_{2k}},\matr{H})|=|\matr{Hom}(\matr{C}_{_{2k}},\matr{H})| \geq 2|E(\matr{H})|.$$
Also, $|\matr{Hom}(\matr{L},[\matr{P}_{_{2k}},\matr{H}])|\leq |V(\matr{H})|$, because the worst case is that
$[\matr{P}_{_{2k}},\matr{H}]$ has a loop on each vertex.
Therefore, when $2|E(\matr{H})|>|V(\matr{H})|$,
$$|\matr{Hom}(\matr{L}\cyl{}\matr{P}_{_{2k}},\matr{H})|\geq 2|E(\matr{H})|>|V(\matr{H})|\geq |\matr{Hom}(\matr{L},[\matr{P}_{_{2k}},\matr{H}])|.$$

 If $\matr{H}$ does not have any vertex of degree $2$, it is a matching and by
Proposition~\ref{pro:con} it is enough to assume that $\matr{H}=\matr{K}_{_2}$.

 Note that if $\matr{F}$ is a bipartite graph, then $ |\matr{Hom}(\matr{F},\matr{K}_{_2})|=2^c$, where $c$ is the number of connected components of $\matr{F}$.
For any graph $\matr{G}$ with $c$ connected components, $\matr{G}\cyl{}\matr{P}_{_{2k}}$ is bipartite with $c$ connected components.
Now, since $[\matr{P}_{_{2k}},\matr{K}_{_2}]=\matr{L}\cup \matr{L}$, there exists $2^c$ homomorphisms from $\matr{G}$ to $[\matr{P}_{_{2k}},\matr{K}_{_2}]$, and consequently,
$$\hom(\matr{G}, [\matr{P}_{_{2k}},\matr{K}_{_2}])=2^c=\hom(\matr{G}\cyl{}\matr{P}_{_{2k}},\matr{K}_{_2}).$$
\item[{\rm (2)}] {\it $\matr{P}_{_{2k}}$ is not upper-closed with respect to any non-bipartite graph.}

 Note that for any non-bipartite graph $\matr{G}$, the construction $[\matr{P}_{_{2k}},\matr{G}] \cyl{} \matr{P}_{_{2k}}$ is bipartite, and consequently,
$$\hom(\matr{G},[\matr{P}_{_{2k}},\matr{G}] \cyl{}\matr{P}_{_{2k}}) = \emptyset.$$

 \item[{\rm (3)}] {\it $\matr{P}_{_{2k}}$ is not lower-closed with respect to any loop free graph.}

 Note that for any non-empty graph $\matr{G}$, $[\matr{P}_{_{2k}},\matr{G} \cyl{}\matr{P}_{_{2k}}]$ has a loop,
and consequently, for a loop free graph $\matr{G}$, we have
$$\hom(\matr{G},[\matr{P}_{_{2k}},\matr{G} \cyl{}\matr{P}_{_{2k}}]) = \emptyset.$$
\end{itemize}

 }\end{exm}

\begin{cor}{
$\matr{P}_{_{2k+1}}$ is not tight with respect to any graph with the following spanning subgraphs,
\begin{itemize}
\item $\matr{C}_{_{2r+1}}$ for $r\leq k$.
\item A graph $\matr{H}$, in which at least half of its vertices lie in an odd cycle of length less than $2k+1$.
\end{itemize}
}\end{cor}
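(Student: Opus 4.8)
The plan is to derive both bullet points directly from the two tightness-failure criteria for odd paths that were just established in the preceding Example (parts (1) of "Odd paths"), together with the observation that both are "spanning subgraph" hypotheses, i.e. they produce copies of short odd cycles covering enough of the vertex set. Recall from Example (Odd paths, part (1)) that $\matr{P}_{_{2k+1}}$ fails to be tight with respect to a simple graph $\matr{H}$ as soon as $\matr{H}$ contains distinct short odd cycles $\matr{C}^1,\ldots,\matr{C}^s$ of lengths $c_i\le 2k+1$ with $2\sum_{i=1}^{s}c_i>|V(\matr{H})|$; the argument there compares $|\hom(\matr{L}\cyl{}\matr{P}_{_{2k+1}},\matr{H})|=|\hom(\matr{C}_{_{2k+1}},\matr{H})|\ge 2\sum c_i$ against $|\hom(\matr{L},[\matr{P}_{_{2k+1}},\matr{H}])|\le |V(\matr{H})|$, the latter since $[\matr{P}_{_{2k+1}},\matr{H}]$ has at most $|V(\matr{H})|$ loops.

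For the first bullet, suppose $\matr{C}_{_{2r+1}}$ with $r\le k$ is a spanning subgraph of $\matr{H}$; then in particular $|V(\matr{H})|=2r+1$ and $\matr{H}$ contains one odd cycle of length $c_1=2r+1\le 2k+1$. I would simply verify the numerical hypothesis: $2\sum c_i = 2(2r+1) = 2|V(\matr{H})| > |V(\matr{H})|$, so by Example (Odd paths, part (1)) the cylinder $\matr{P}_{_{2k+1}}$ is not tight with respect to $\matr{H}$. For the second bullet, if $\matr{H}$ has a spanning subgraph $\matr{H}'$ in which at least half the vertices lie on a single odd cycle $\matr{C}$ of length $c\le 2k+1$, then $\matr{H}$ itself contains that odd cycle (a spanning subgraph preserves $V$ and only loses edges, so $\matr{C}$ survives) and $c=|V(\matr{C})|\ge |V(\matr{H})|/2$, hence again $2c\ge|V(\matr{H})|$; strictly one wants $>$, and one handles the boundary case either by noting "at least half" together with the fact that odd cycles contribute $2c$ strictly, or by observing that a spanning odd cycle of length exactly $|V(\matr{H})|$ is covered by the first bullet and otherwise $2c>|V(\matr{H})|$ holds outright. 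Either way the hypothesis of Example (Odd paths, part (1)) is met and tightness fails.

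The only genuine subtlety — and the step I expect to need the most care — is the edge case where the inequality $2\sum c_i > |V(\matr{H})|$ becomes an equality, i.e. when the short odd cycle (or cycles) exactly covers $V(\matr{H})$ using exactly half the vertices in a way that makes $2\sum c_i=|V(\matr{H})|$. In that situation one must go back to the bound $|\hom(\matr{L},[\matr{P}_{_{2k+1}},\matr{H}])|\le |V(\matr{H})|$ and argue it is in fact \emph{strict}: a vertex of $[\matr{P}_{_{2k+1}},\matr{H}]$ carries a loop only if there is a closed walk of length $2k+1$ through the corresponding vertex of $\matr{H}$, and if $\matr{H}$ is (a spanning subgraph plus extra edges containing) merely an odd cycle of length $c<2k+1$ some vertices may fail to support such a closed walk, strictly lowering the loop count; combined with the strict $\ge 2\sum c_i$ coming from the two rotational families of cycle homomorphisms, one recovers a strict inequality $|\hom(\matr{L}\cyl{}\matr{P}_{_{2k+1}},\matr{H})| > |\hom(\matr{L},[\matr{P}_{_{2k+1}},\matr{H}])|$. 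I would write this boundary analysis out explicitly, then state that in all remaining (non-boundary) cases the result is an immediate instance of Example (Odd paths, part (1)).
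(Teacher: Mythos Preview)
Your approach is exactly what the paper intends: the corollary is stated there with no proof, as an immediate consequence of the preceding Example (Odd paths, part~(1)). The first bullet goes through cleanly as you say, since a spanning $\matr{C}_{_{2r+1}}$ forces $|V(\matr{H})|=2r+1$ and then $2c_1=2(2r+1)>2r+1$.

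For the second bullet you correctly isolate the only delicate point, the boundary case $2c=|V(\matr{H})|$, but neither of your proposed fixes actually closes it. If the short odd cycle sits on exactly half the vertices (so $|V(\matr{H})|$ is even and $c=|V(\matr{H})|/2$), the cycle is \emph{not} spanning, so the first bullet does not absorb this case as you suggest. And your fallback---arguing that the loop count in $[\matr{P}_{_{2k+1}},\matr{H}]$ is strictly below $|V(\matr{H})|$---fails in general: the hypothesis only concerns a spanning \emph{sub}graph, so the ambient graph $\matr{H}$ may carry extra edges producing closed $(2k{+}1)$-walks through every vertex (e.g.\ take $\matr{H}=\matr{K}_{_{2c}}$ with the given triangle or short odd cycle inside). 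The paper, having no proof, does not address this edge case either; the cleanest reading is that ``at least half'' is meant as ``more than half'', which makes the corollary a literal specialization of the Example.
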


 Applications of indicator/replacement constructions to study the computational complexity of the homomorphism problem was initiated in
\cite{16} and now is well-known in the related literature (see \cite{5,HENE,7,11,15,16,fiber1,18,trans}). In what follows we will consider
some basic implications for the cylindrical/exponential construction as a generalization.

 \begin{prb} {\bf The $\matr{H}$-homomorphism problem } {\rm ($\matr{H}$-\hom)}\\
Given a labeled graph $\matr{G}$, does there exist a homomorphism $\sigma: \matr{G} \longrightarrow \matr{H}$?\\
$($i.e. find out whether $\lhom(\matr{G},\matr{H}) \not = \emptyset$.$)$
\end{prb}
\begin{prb} {\bf The $\#\matr{H}$-homomorphism problem } {\rm ($\#\matr{H}$-\hom)}\\
Given a labeled graph $\matr{G}$, compute the number of homomorphisms $\sigma: \matr{G} \longrightarrow \matr{H}$?\\
$($i.e. find out the number $|\lhom(\matr{G},\matr{H})|$.$)$
\end{prb}
To begin, note that as an immediate consequence of Theorem~\ref{thm:maino} we have,
\begin{cor}{\label{cor:np} If $\ \leq^{^{P}}_{_{m}}$ stands for the many-to-one polynomial-time reduction, then
\begin{center}
{\rm $[\matr{C},\matr{H}]_{_{\Gamma}}$-\hom} $\ \leq^{^{P}}_{_{m}} \ $ {\rm $\matr{H}$-\hom}.
\end{center}
}\end{cor}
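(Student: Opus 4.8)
The plan is to exhibit a polynomial-time computable map that takes an instance of the $[\matr{C},\matr{H}]_{_{\Gamma}}$-homomorphism problem, i.e.\ a $(\Gamma,m)$-graph $\matr{G}$, to an instance of the $\matr{H}$-homomorphism problem, namely the labeled graph $\matr{G} \cyl{\Gamma} \matr{C}$, and to argue correctness directly from Theorem~\ref{thm:maino}. First I would observe that the cylinder set $\matr{C}$ is \emph{fixed} (it is the parameter defining the source problem $[\matr{C},\matr{H}]_{_{\Gamma}}$-\hom), so on input $\matr{G}$ the algorithm simply runs the explicit three-step construction of $\matr{G} \cyl{\Gamma} \matr{C}$ given in Section~\ref{sec:cylconst}: blow up each vertex of $\matr{G}$ into $k$ copies, replace each labeled edge of $\matr{G}$ by the appropriately twisted copy of the corresponding $\matr{C}^{^{j}}$, and perform the base identifications dictated by the markings and by the relations $\J^{^{j}}$. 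Since $|\matr{C}^{^{j}}|$ and $k$, $t$, $m$ are constants, the output has size $O(|V(\matr{G})| + |E(\matr{G})|)$ and each step is clearly carried out in polynomial (indeed linear) time, so the reduction map is polynomial-time computable.

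Next I would verify the reduction is correct, i.e.\ that $\matr{G}$ is a YES-instance of $[\matr{C},\matr{H}]_{_{\Gamma}}$-\hom if and only if $\matr{G} \cyl{\Gamma} \matr{C}$ is a YES-instance of $\matr{H}$-\hom. But this is exactly part~(a) of Theorem~\ref{thm:maino}:
$$\lhom(\matr{G} \cyl{\Gamma} \matr{C},\matr{H}) \neq \emptyset \quad \Leftrightarrow \quad \chom(\matr{G},[\matr{C},\matr{H}]_{_{\Gamma}}) \neq \emptyset,$$
and $\chom(\matr{G},[\matr{C},\matr{H}]_{_{\Gamma}}) \neq \emptyset$ is precisely the statement that $\matr{G}$ is a YES-instance of $[\matr{C},\matr{H}]_{_{\Gamma}}$-\hom. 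Thus the equivalence of instances is immediate, and no further combinatorial work is needed beyond quoting the theorem.

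The only genuine subtlety I expect — and the step I would be most careful about — is making sure the problem instances are of the right logical type and the reduction respects the encodings: the target graph $\matr{H}$ in $[\matr{C},\matr{H}]_{_{\Gamma}}$-\hom is the \emph{same} fixed labeled graph as in $\matr{H}$-\hom (so the cylinder set $\matr{C}$ is baked into the source problem, not part of the input), and the constructed instance $\matr{G} \cyl{\Gamma} \matr{C}$ is a plain labeled graph in the sense required by $\matr{H}$-\hom, with its labeling $\ell_{_{\matr{G}\cyl{\Gamma}\matr{C}}}$ inherited edge-by-edge from the $\ell_{_{j}}$ as noted after the construction. Once this bookkeeping is in place, the combination of the polynomial-size/polynomial-time construction with Theorem~\ref{thm:maino}(a) completes the proof. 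I would also remark in passing that the stronger counting statement $|\lhom(\matr{G}\cyl{\Gamma}\matr{C},\matr{H})| = |\chom(\matr{G},[\matr{C},\matr{H}]_{_{\Gamma}})|$ need \emph{not} hold in general (it is exactly the tightness condition of Lemma~\ref{lem:tightnes}), so this particular corollary is about the decision problem only; the analogous reduction for $\#\matr{H}$-\hom would require an additional hypothesis.
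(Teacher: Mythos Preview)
Your proposal is correct and follows exactly the paper's approach: the paper's own proof is a single sentence stating that ``the reduction is exactly the cylindrical construction which is clearly a polynomial-time computable mapping of graphs,'' and you have simply fleshed out the details (polynomial size of $\matr{G}\cyl{\Gamma}\matr{C}$ since $\matr{C}$ is fixed, correctness via Theorem~\ref{thm:maino}(a)) that the paper leaves implicit. Your additional remarks on encodings and on why the counting version requires tightness are accurate and go a bit beyond what the paper writes here, but they are consistent with the surrounding material.
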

\begin{proof}{
The reduction is exactly the cylindrical construction which is clearly a polynomial-time computable mapping of graphs.
}\end{proof}

 The following corollary is a direct consequence of Lemma~\ref{lem:tightnes}.
\begin{cor} If $\ \leq^{^{P}}_{_{\#}}$ stands for the parsimonious polynomial-time reduction, and $\matr{C}$ is tight with respect to $\matr{H}$, then
\begin{center}
{\rm $\#[\matr{C},\matr{H}]_{_{\Gamma}}$-\hom} $\ \leq^{^{P}}_{_{\#}} \ $ {\rm $\#\matr{H}$-\hom}.
\end{center}
\end{cor}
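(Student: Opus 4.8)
The plan is to take the required reduction to be the cylindrical construction itself, exactly as in Corollary~\ref{cor:np}, and to use tightness to promote it from a many-to-one reduction to a count-preserving (parsimonious) one. Recall that a parsimonious polynomial-time reduction from the $\#[\matr{C},\matr{H}]_{_{\Gamma}}$-homomorphism problem to the $\#\matr{H}$-homomorphism problem is a polynomial-time computable map $\matr{G} \mapsto \widehat{\matr{G}}$ sending a $(\Gamma,m)$-graph $\matr{G}$ to a labeled graph $\widehat{\matr{G}}$ with $|\chom(\matr{G},[\matr{C},\matr{H}]_{_{\Gamma}})| = |\lhom(\widehat{\matr{G}},\matr{H})|$.

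First I would set $\widehat{\matr{G}} \isdef \matr{G}\cyl{\Gamma}\matr{C}$ and note, as in the proof of Corollary~\ref{cor:np}, that this map is polynomial-time computable: since $\matr{C}$, $\Gamma$ and $m$ are part of the fixed problem and not of the input, forming $\matr{G}\cyl{\Gamma}\matr{C}$ amounts to replacing each edge of $\matr{G}$ by a twisted copy of one of finitely many fixed cylinders and performing the base identifications described in Section~\ref{sec:cylconst}, which produces a graph whose size is linear in $|V(\matr{G})|+|E(\matr{G})|$ up to the fixed constant $|V(\matr{C})|+|E(\matr{C})|$.

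Next I would invoke Lemma~\ref{lem:tightnes}: since $\matr{C}$ is tight with respect to $\matr{H}$, condition (b) of that lemma gives
$$|\lhom(\matr{G}\cyl{\Gamma}\matr{C},\matr{H})| = |\chom(\matr{G},[\matr{C},\matr{H}]_{_{\Gamma}})|$$
for every $(\Gamma,m)$-graph $\matr{G}$, which is exactly the count-preserving condition. Equivalently, condition (a) of Lemma~\ref{lem:tightnes} says that the retraction $r_{_{\matr{G},\matr{H}}}$ and the section $s_{_{\matr{G},\matr{H}}}$ of Theorem~\ref{thm:maino} are mutually inverse bijections between $\lhom(\matr{G}\cyl{\Gamma}\matr{C},\matr{H})$ and $\chom(\matr{G},[\matr{C},\matr{H}]_{_{\Gamma}})$, so the two counting instances have the same number of solutions. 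Combining this with the previous paragraph yields the claimed reduction.

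I do not expect a genuine obstacle here: the statement is essentially a repackaging of Corollary~\ref{cor:np} together with Lemma~\ref{lem:tightnes}. The only points deserving care are the standing convention that ``polynomial-time'' is measured with $\matr{C}$, $\Gamma$ and $m$ held fixed (so that $\matr{G}\cyl{\Gamma}\matr{C}$ has size polynomial --- indeed linear --- in that of $\matr{G}$), and the observation that for a parsimonious reduction one needs only the equality of the two counts and not an explicit description of the bijection realizing it, although Theorem~\ref{thm:maino} does supply one.
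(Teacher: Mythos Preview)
Your proposal is correct and matches the paper's approach exactly: the paper simply states that this corollary is a direct consequence of Lemma~\ref{lem:tightnes}, and you have spelled out precisely that consequence, using the cylindrical construction as the polynomial-time reduction (as in Corollary~\ref{cor:np}) and invoking condition~(b) of Lemma~\ref{lem:tightnes} for the count equality.
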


\begin{cor}{\label{cor:np1}
Let $n\geq 3$ be the odd girth of the simple graph $\matr{G}$. If there exists an odd path of length less than $n$ between each pair of vertices
in $\matr{G}$, then $\matr{G}-\hom$ is $\matr{NP}$-complete.
}\end{cor}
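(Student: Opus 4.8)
The plan is to reduce $|V(\matr{G})|$-colouring, i.e. the classically $\matr{NP}$-complete problem $\matr{K}_{_{|V(\matr{G})|}}$-\hom\ (recall that $k$-colourability is $\matr{NP}$-complete for every $k\ge 3$), to $\matr{G}$-\hom. Membership of $\matr{G}$-\hom\ in $\matr{NP}$ is immediate, since a homomorphism is a pair of maps on the polynomially many vertices and edges and the compatibility/label conditions are checkable in polynomial time, so it suffices to establish $\matr{NP}$-hardness. By Corollary~\ref{cor:np} this follows once we exhibit a $\Gamma$-coherent set of cylinders $\matr{C}$ with $[\matr{C},\matr{G}]_{_{\Gamma}}=\matr{K}_{_{|V(\matr{G})|}}$. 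The candidate I would use is the single path cylinder $\matr{C}=\{\matr{P}_{_{n-2}}\}$; every $1$-cylinder is ${\bf S_{_{1}}}$-coherent, so here $\Gamma=\Ident$ and $m=1$, and by Example~\ref{exm:exp} we have $[\matr{P}_{_{n-2}},\matr{G}]=\matr{G}^{\,n-2}$. Note that $n-2\ge 1$ and that $n-2$ is odd, because $n\ge 3$ is odd.

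The core step is the identity $\matr{G}^{\,n-2}=\matr{K}_{_{|V(\matr{G})|}}$. First, $\matr{G}$ contains an odd cycle of length $n$ (its odd girth), hence $|V(\matr{G})|\ge n\ge 3$. Second, for any two distinct vertices $u,v$ the hypothesis gives an odd path $\matr{P}$ from $u$ to $v$ of some length $m<n$; since $m$ is odd we have $m\le n-2$ and $n-2-m$ is even, so prolonging the walk by running back and forth along the last edge of $\matr{P}$ a further $(n-2-m)/2$ times produces a walk of length exactly $n-2$ from $u$ to $v$, whence $uv\in E(\matr{G}^{\,n-2})$; thus $\matr{K}_{_{|V(\matr{G})|}}$ is a spanning subgraph of $\matr{G}^{\,n-2}$ (and, as the hypothesis forbids isolated vertices, no vertices are dropped by step iii of the exponential construction). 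Third, $\matr{G}^{\,n-2}$ has no loops: a loop at $w$ would be a closed walk of odd length $n-2$ through $w$, which contains an odd cycle of length $\le n-2<n$, contradicting that the odd girth of $\matr{G}$ is $n$. Hence $\matr{G}^{\,n-2}$ equals $\matr{K}_{_{|V(\matr{G})|}}$ exactly. This is the same walk-prolongation argument used for item $1$ of Proposition~\ref{pro:basics1}, now driven by the hypothesis on all pairs rather than merely by the structure of $\matr{C}_{_{2k+1}}$.

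Putting the pieces together, Corollary~\ref{cor:np} — equivalently Theorem~\ref{thm:maino} applied with $\matr{C}=\{\matr{P}_{_{n-2}}\}$, together with the observation from Example~\ref{exm:cylop} that $\matr{F}\cyl{}\matr{P}_{_{n-2}}$ is the $(n-2)$-subdivision of $\matr{F}$ and is computable in polynomial time — yields {\rm $\matr{K}_{_{|V(\matr{G})|}}$-\hom}\ $\leq^{^{P}}_{_{m}}$\ {\rm $\matr{G}$-\hom}. Since $|V(\matr{G})|\ge 3$, the source problem is $\matr{NP}$-complete, so $\matr{G}$-\hom\ is $\matr{NP}$-hard, and being in $\matr{NP}$ it is $\matr{NP}$-complete.

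The point I expect to demand the most care is the exact equality $\matr{G}^{\,n-2}=\matr{K}_{_{|V(\matr{G})|}}$: it needs both the upper bound on odd-path lengths (to produce every non-loop edge) and the odd-girth lower bound (to forbid loops), and one should also make explicit that all graphs involved carry the trivial one-element labelling, so that $[\matr{P}_{_{n-2}},\matr{G}]$-\hom\ is genuinely the $|V(\matr{G})|$-colouring problem. The remaining steps are routine invocations of Theorem~\ref{thm:maino}, Corollary~\ref{cor:np}, and Examples~\ref{exm:exp}--\ref{exm:cylop}.
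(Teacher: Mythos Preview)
Your proof is correct and follows essentially the same route as the paper: choose the path cylinder $\matr{P}_{_{n-2}}$, establish that $[\matr{P}_{_{n-2}},\matr{G}]\simeq\matr{K}_{_{|V(\matr{G})|}}$, and then invoke Corollary~\ref{cor:np} together with $|V(\matr{G})|\ge 3$. The paper's own proof simply asserts the isomorphism $[\matr{P}_{_{n-2}},\matr{G}]\simeq\matr{K}_{_{|V(\matr{G})|}}$ in one line, whereas you spell out both directions (walk-prolongation to obtain every non-loop edge, and the odd-girth bound to rule out loops), which is a welcome elaboration rather than a different argument.
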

\begin{proof}{
By the assumptions, $[\matr{P}_{_{n-2}},\matr{G}]\simeq \matr{K}_{_{|V(\matr{G})|}}$ and by Corollary~\ref{cor:np} we have
\begin{center}
{\rm $[\matr{P}_{_{n-2}},\matr{G}]$-\hom} $\simeq$ {\rm $\matr{K}_{_{|V(\matr{G})|}}$-\hom} $\ \leq^{^{P}}_{_{m}}$ {\rm $\matr{G}$-\hom}.
\end{center}
But, since $|V(\matr{G})|\geq 3$, $\matr{G}-\hom$ is $\matr{NP}$-complete.
}\end{proof}

 \begin{cor}{\label{cor:ex}
For following graphs, $\matr{G}${\rm-\hom} is $\matr{NP}$-complete.
\begin{itemize}
\item $\matr{G}=\matr{C}_{_n}$ for odd $n$.
\item $\matr{G}=$ the Petersen graph.
\item $\matr{G}=$ Coxeter graph.
\item $\matr{G}=\matr{C}_{_n}^r$ for odd $n$ and $r|n-2$.
\end{itemize}
}\end{cor}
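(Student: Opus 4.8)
The plan is to reduce each item to the classical fact that $\matr{K}_{_r}$-\hom (graph $r$-colourability) is $\matr{NP}$-complete for $r\geq 3$. Membership of every $\matr{H}$-\hom in $\matr{NP}$ is immediate, so in each case only $\matr{NP}$-hardness has to be produced, and the engine for this is Corollary~\ref{cor:np}: the cylindrical construction is a polynomial-time reduction $[\matr{C},\matr{H}]_{_{\Gamma}}$-\hom $\leq^{^{P}}_{_{m}}$ $\matr{H}$-\hom. Combined with the trivial remark that homomorphically equivalent targets define the very same decision problem (compose with the two witnessing homomorphisms), it suffices in each case to exhibit a path cylinder $\matr{P}_{_\ell}$ for which $[\matr{P}_{_\ell},\matr{G}]$ is homomorphically equivalent to a clique on at least three vertices.

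For $\matr{G}=\matr{C}_{_{n}}^{r}$ with $n$ odd, $n\geq 3$ and $r\mid n-2$ (the case $r=1$ being the bare odd cycle), I would bypass any direct analysis of girth or distances and instead lean on Proposition~\ref{pro:basics1}. Write $n-2=rs$ and $n=2k+1$; then, using $\matr{G}^{p}=[\matr{P}_{_p},\matr{G}]$ and parts (2) and (1) of Proposition~\ref{pro:basics1},
$$[\matr{P}_{_s},\matr{C}_{_{n}}^{r}]=(\matr{C}_{_{n}}^{r})^{s}\homeq \matr{C}_{_{n}}^{rs}=\matr{C}_{_{n}}^{n-2}=[\matr{P}_{_{n-2}},\matr{C}_{_{n}}]\simeq \matr{K}_{_{n}}.$$
By Corollary~\ref{cor:np} we get $[\matr{P}_{_s},\matr{C}_{_{n}}^{r}]$-\hom $\leq^{^{P}}_{_{m}}$ $\matr{C}_{_{n}}^{r}$-\hom, and since $[\matr{P}_{_s},\matr{C}_{_{n}}^{r}]\homeq \matr{K}_{_{n}}$ the left-hand problem is exactly $\matr{K}_{_{n}}$-\hom; as $n\geq 3$ this is $\matr{NP}$-complete, hence so is $\matr{C}_{_{n}}^{r}$-\hom.

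For the Petersen and Coxeter graphs I would instead invoke Corollary~\ref{cor:np1} directly, so the work reduces to checking its hypothesis: the graph is simple with odd girth $n\geq 3$ and any two distinct vertices are joined by an odd path of length $<n$ (which yields $[\matr{P}_{_{n-2}},\matr{G}]\simeq \matr{K}_{_{|V(\matr{G})|}}$). For the Petersen graph $n=5$ and the diameter is $2$: adjacent vertices are joined by an edge, while non-adjacent ones are at distance $2$ and hence lie on a common $5$-cycle (every path of length $2$ extends to a girth cycle), so are joined by a path of length $3<5$; with $|V|=10\geq 3$, Corollary~\ref{cor:np1} applies. For the Coxeter graph $n=7$ and the diameter is $4$; one checks from its distance-regularity (intersection array $\{3,2,2,1;1,1,1,2\}$) and arc-transitivity that vertices at distances $1,2,3,4$ are joined by odd paths of lengths $1,5,3,5$ respectively, each $<7$, so Corollary~\ref{cor:np1} applies with $|V|=28\geq 3$.

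The one step I expect to require genuine combinatorial care is this last verification for the Coxeter graph — that every pair at distance $2$, and every pair at distance $4$, is joined by an odd path of length exactly $5$. The lower bound ``an odd path between vertices at even distance $d$ has length $\geq n-d$'' is forced by the odd girth, and the odd-distance cases are trivial, so only the existence of those length-$5$ paths is at issue; this is a finite check on $28$ vertices, carried out most transparently from the Fano-plane description of the Coxeter graph (distance-transitivity then reduces it to producing one $7$-cycle through a distance-$2$ pair and one length-$9$ closed walk through a distance-$4$ pair). Everything else is a direct appeal to Theorem~\ref{thm:maino}, Corollaries~\ref{cor:np} and \ref{cor:np1}, Proposition~\ref{pro:basics1}, and the hardness of graph colouring.
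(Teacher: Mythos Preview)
Your proposal is correct and follows essentially the same route as the paper. The paper handles the first three items by a bare appeal to Corollary~\ref{cor:np1} and treats $\matr{C}_{_n}^{r}$ by unwinding the duality of Theorem~\ref{thm:maino} along the identity $\matr{P}_{_{n-2}}\simeq \matr{P}_{_{(n-2)/r}}\cyl{}\matr{P}_{_r}$; your version simply packages that same computation as a citation of Proposition~\ref{pro:basics1}(1),(2), absorbs the odd-cycle case as $r=1$, and is more explicit than the paper about why the Petersen and Coxeter graphs satisfy the hypothesis of Corollary~\ref{cor:np1}.
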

\begin{proof}{
The first three cases follow from Corollary~\ref{cor:np1}.
For the last one, note that $\matr{C}_{_n}${\rm-\hom} is $\matr{NP}$-complete for odd $n$. Also by Theorem~\ref{thm:maino}, we have
$$\begin{array}{lll}
\matr{G} \rightarrow [\matr{P}_{_{n-2}},\matr{C}_{_n}]\simeq \matr{K}_{_n} & \leq^{^{P}}_{_{m_{}}} & \matr{G}\cyl{}\matr{P}_{_{n-2}} \rightarrow \matr{C}_{_n}\\
& \leq^{^{P}}_{_{m}} \quad &
(\matr{G}\cyl{}(\matr{P}_{_\frac{n-2}{r}}\cyl{}\matr{P}_{_r}) \rightarrow \matr{C}_{_n}\\
&\leq^{^{P}}_{_{m}} \quad &
(\matr{G}\cyl{}\matr{P}_{_\frac{n-2}{r}})\cyl{}\matr{P}_{_r}\rightarrow \matr{C}_{_n}\\
& \leq^{^{P}}_{_{m}} \quad & \matr{G}\cyl{}\matr{P}_{_\frac{n-2}{r}}\rightarrow [\matr{P}_{_r}, \matr{C}_{_n}]\\
& \leq^{^{P}}_{_{m}} \quad &
\matr{G}\rightarrow [\matr{P}_{_r}, \matr{C}_{_n}].
\end{array}$$
}\end{proof}

 Let ${\cal H}$ be a class of graphs. Then the {\it cylindrical closure } of ${\cal H}$, denoted by $\Psi({\cal H})$, is defined as
$$\Psi({\cal H}) \isdef \{ \matr{G} \ : \ \exists \matr{C} \ \ [\matr{C},\matr{G}] \in {\cal H} \}.$$
Considering the identity cylinder one gets ${\cal H} \subseteq \Psi({\cal H})$, and moreover, the {\it universal closure}
of ${\cal H}$ is defined as
$$\overline{{\cal H}} \isdef \displaystyle{\bigcup_{n=0}^{\infty}} \Psi^{n}({\cal H})=\displaystyle{\lim_{n \to \infty}} \Psi^{n}({\cal H}),$$
where $\Psi^{0}({\cal H}) \isdef {\cal H}$. Clearly, by Corollary~\ref{cor:np}, if the $\matr{H}$-homomorphism problem is
$\matr{NP}$-complete for all $\matr{H} \in {\cal H}$ then this problem is also $\matr{NP}$-complete for all
$\matr{H} \in \overline{{\cal H}}$. To prove a Dichotomy Conjecture in a fixed category of graphs it is necessary and sufficient
to find a class ${\cal H}$ such that the $\matr{H}$-homomorphism problem is $\matr{NP}$-complete for all
$\matr{H} \in \overline{{\cal H}}$ and it is polynomial time solvable for all $\matr{H} \not \in \overline{{\cal H}}$
(e.g. see \cite{5,HENE,7,11,15,16,fiber1,18,trans} for the background and results in this regard for simple and directed graphs
as well as CSP's). It is interesting to consider possible generalizations of indicator/replacement/fiber construction techniques
to the case of cylindrical constructions or consider dichotomy and no-homomorphism results in the category of labeled graphs.


 \ \\

 \noindent
{\large \bf Appendix: On the existence of pushouts} \ \\

 This section contains subtleties about the existence of pushouts in different categories of graphs. The contents are classic
and have been included to clarify the details and avoid misunderstandings.

\begin{defin}{\textbf{Serre-Stallings category of Graphs, ${\bf StGrph}$}\\
This is a category of graphs usually used in algebraic topology and dates back at least to the contributions of Serre and Stallings \cite{Serre, St} where the existence of pushouts in this category was first addressed in \cite{St}.

The category is denoted by ${\bf StGrph}$ and a graph $(\matr{F},V, E)$ is an object of this category with
two sets $V$ (containing vertices) and $E$ (containing edges), and two functions $\iota_{_\matr{F}}:E\rightarrow V$
and $s_{_\matr{F}}:E\rightarrow E$. For each edge $e\in E$, there is an inverse edge $s(e)=\overline{e}\in E$, and an initial vertex $\iota(e)\in V$ where the terminal vertex defined as $\tau(e)=\iota(s(e))$ satisfying $\overline{\overline{e}}=e$ and $\overline{e}\neq e$.

 A ${\bf StGrph}$-{\it homomorphism} $(\sigma_{_{V}},\sigma_{_{E}})$ between two graphs $(\matr{G},V, E)$ and $(\matr{H},V',E')$ is a pair of maps that preserve structures i.e.
for two maps $\sigma_{_{V}} :V\rightarrow V'$ and $\sigma_{_{E}} : E\rightarrow E'$, we have
$$\iota_{_{\matr{H}}} \circ \sigma_{_{E}} = \sigma_{_{V}} \circ \iota_{_{\matr{G}}},$$
$$s_{_{\matr{H}}} \circ \sigma_{_{E}} = \sigma_{_{E}} \circ s_{_{\matr{G}}}.$$
}\end{defin}

 If one fixes orientations for graphs $\matr{G}$ and $\matr{H}$ (i.e. choosing just one edge from any pair $\{e,\overline{e}\}$) and call the corresponding directed graphs $O(\matr{G})$ and $O(\matr{H})$, respectively, then assuming the existence of an ordinary directed graph homomorphism from $O(\matr{G})$ to $O(\matr{H})$, then clearly, one can canonically construct a graph homomorphism from $\matr{G}$ to $\matr{H}$ in ${\bf StGrph}$. Hence, if there exists orientations
for $\textbf{StGraph}$-graphs $\matr{G}, \matr{H}_1$ and $\matr{H}_2$ such that two graph homomorphisms $\alpha: \matr{G}\rightarrow \matr{H}_1$ and $\beta: \matr{G}\rightarrow \matr{H}_2$ preserve the corresponding orientations, then their pushout exists. 

 By \cite{St} for two graph homomorphisms $\alpha: \matr{G}\rightarrow \matr{H}_1$ and $\beta: \matr{G}\rightarrow \matr{H}_2$ in ${\bf StGrph}$, their pushout exists, if and only if there exists orientations for $\matr{G}, \matr{H}_1$ and
$\matr{H}_2$ such that $\alpha$ and $\beta$ preserve such orientations.

 \begin{exm}{\label{pushout} \textbf{Non-existence of pushout in ${\bf StGrph}$}\\
Consider $\matr{G}=\matr{C}_7$ and $\matr{H}_1=\matr{H}_2=\matr{C}_3$. We mark vertices of $\matr{G}$ with $\{1,2,3,4,5,6.7\}$ and vertices of $\matr{C}_3$ by $\{1,2,3\}$ in a circular configuration (see Figure \ref{fig:push}). Now, let
$$\alpha_{_V}=\{(1,1), (2,2), (3,3), (4,1),(5,2), (6,3), (7,2) \},$$
$$\beta_{_V}=\{(1,1), (2,2), (3,1), (4,2),(5,1), (6,2), (7,3) \},$$
where edges are mapped canonically (see Figure \ref{fig:push}).

 We claim that we can't find orientations for $\matr{G}$, $\matr{H}_1$ and $\matr{H}_2$ which preserved by $\alpha$ and $\beta$.
To see this, assume that we have orientations $O_{\matr{G}}, O_{\matr{H}_1}$ and $O_{\matr{H}_2}$ preserved by
$\alpha$ and $\beta$. We write $O_{\matr{G}}(ij)=1$, if the orientation of the edge $ij$ in $O_{\matr{G}}$ is from $i$ to $j$ in ${\matr{G}}$ and $O_{\matr{G}}(ij)=-1$ vice versa.

 Now, since $\alpha$ preserves orientations, we have $O_{\matr{G}}(12)=O_{\matr{G}}(45)$, and by orientation preserving property of $\beta$, we also have $O_{\matr{G}}(12)=-O_{\matr{G}}(45)$ which is a contradiction.

 On the other hand, one can see that if there exists a pushout object $(\matr{Q},\ j^1: \matr{H}_1 \rightarrow \matr{Q},\ j^2: \matr{H}_2\rightarrow \matr{Q})$, then
$$\alpha_{_E}(12)=\alpha_{_E}(45) \Rightarrow j^1_{_E}\alpha_{_E}(12)= j^1_{_E}\alpha_{_E}(45),$$
$$\beta_{_E}(21)=\beta_{_E}(45) \Rightarrow j^2_{_E}\beta_{_E}(21)= j^2_{_E}\beta_{_E}(45).$$
Hence, by commutativity of the pushout diagram we should have $j^1_{_E}\alpha_{_E}(45)=j^2_{_E}\beta_{_E}(45)$, hence $j^1_{_E}\alpha_{_E}(12)=j^2_{_E}\beta_{_E}(21)$.
But since the graph homomorphisms should preserve structure, we should also have
$$j^2_{_E}\beta_{_E}(21)=j^2_{_E}\beta_{_E}(\overline{12})=\overline{j^2_{_E}\beta_{_E}(12)}=\overline{j^1_{_E}\alpha_{_E}(21)}=\overline{j^2_{_E}\beta_{_E}(21)},$$
which is a contradiction since we have a rule in ${\bf StGrph}$ indicating that $\overline{j^2_{_E}\beta_{_E}(21)}\neq j^2_{_E}\beta_{_E}(21)$.
}\end{exm}
\begin{figure}[ht]
\centering{\includegraphics[width=9cm]{./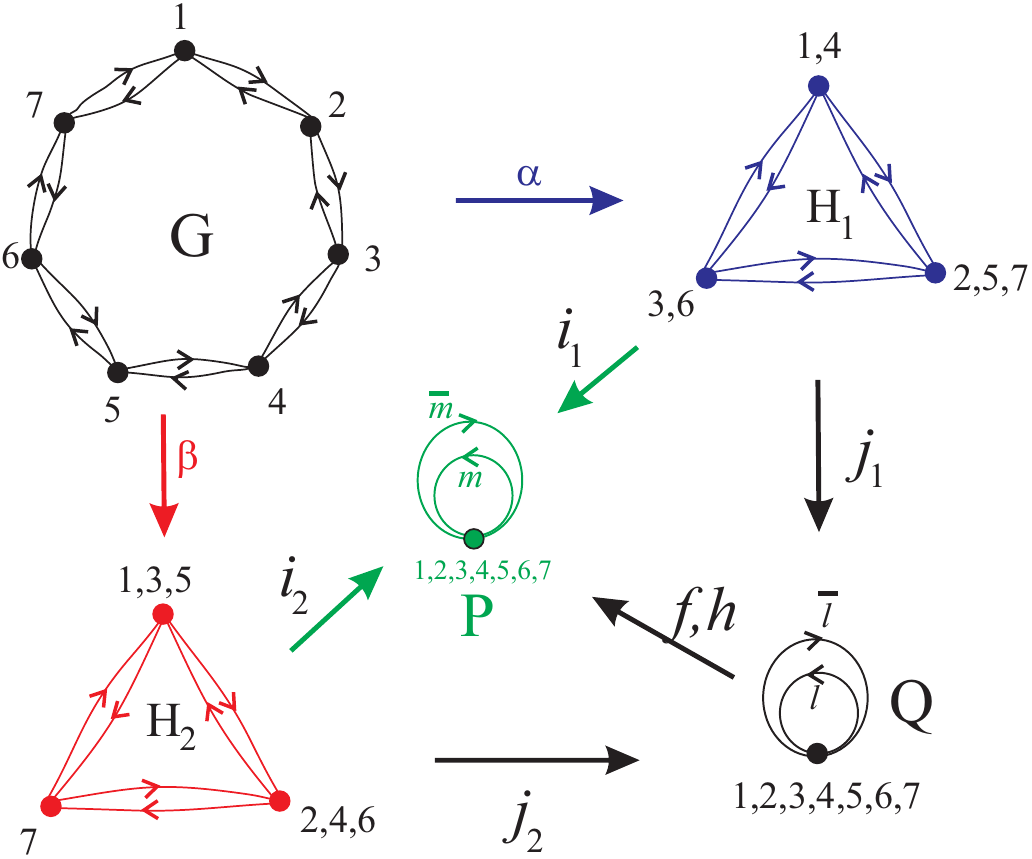}}
\caption{Push out diagram in ${\bf SGrph}$, where there exist two homomorphisms between two object which commute the diagram.}
\label{fig:push}
\end{figure}

Note that it is straight forward to verify that pushouts exist in  ${\bf Grph}$ and ${\bf SymGrph}$, since each homomorphism can be 
decomposed into vertex and edge maps for which the pushout exist and are compatible. Hence, the pushout of $\alpha$ and $\beta$ in the above example exists and is equal to a simple loop. 

Also, let ${\bf SGrph}$ be a the subcategory of  ${\bf Grph}$ whose objects are as those of ${\bf StGrph}$. 
Then, it is not hard to see that the pushout of $\alpha$ and $\beta$ in Example~\ref{pushout} doesn't exists in ${\bf SGrph}$, either. This can be verified since the pushout object, if it would have existed,  should have been a single vertex with two directed loops $\{l,\overline{l}\}$, where the image of $j_{_E}^1$ and $j_{_E}^2$ must be a singleton and without loss of generality we can assume that $\matr{Image}(j_{_E}^1)=\matr{Image}(j_{_E}^2)=\{l\}$. Therefore, considering two 
pushout objects (cones) $(\matr{P}, i^1: \matr{H}_1 \rightarrow \matr{P}, i^2: \matr{H}_2\rightarrow \matr{P})$ and 
$(\matr{Q}, j^1: \matr{H}_1 \rightarrow \matr{Q}, j^2: \matr{H}_2\rightarrow \matr{Q})$ for which the image of $i^1_{_E}$ and $i^2_{_E}$ is a single loop which we denote by $m$. Hence, we have two homomorphisms $h,f:\matr{Q} \rightarrow \matr{P}$, where
 $$h_{_E}=\{(l,m), (\overline{l},m)\}, \qquad f_{_E}=\{(l,m), (\overline{l},\overline{m})\}.$$
Both $h$ and $f$ satisfying the pushout diagram commutativity condition which is a contradiction.

Striktly speaking, in ${\bf SymGrph}$ there exist only a unique homomorphism between a pair of loops but in ${\bf SGrph}$, there exists more than one homomorphism between a pair of loops.

\begin{pro}{ Let $F:{\bf SGrph}\rightarrow {\bf SymGrph}$ be the forgetful functor, that sends each (directed) graph to its corresponding simple graph. Then $F$ is bijective on objects, but is not faithful on objects endowed with loops or multiple edges.
}\end{pro}

 \end{document}